\newcommand{\reals}{\mathbb{R}}
\newcommand{\complex}{\mathbb{C}}
\newcommand{\integers}{\mathbb{Z}}
\newcommand{\bracketb}[1]{\Big[#1\Big]}
\newcommand{\angles}[1]{\left\langle #1 \right\rangle}
\newcommand{\paraa}[1]{\big(#1\big)}
\newcommand{\parab}[1]{\Big(#1\Big)}
\newcommand{\sgn}{\operatorname{sgn}}
\newcommand{\im}{\operatorname{im}}
\newcommand{\spacearound}[1]{\quad#1\quad}
\newcommand{\equivalent}{\spacearound{\Leftrightarrow}}
\renewcommand{\implies}{\spacearound{\Rightarrow}}
\newtheorem{theorem}{Theorem}[section]
\newtheorem{corollary}[theorem]{Corollary}
\newtheorem{lemma}[theorem]{Lemma}
\newtheorem{proposition}[theorem]{Proposition}
\theoremstyle{definition}
\newtheorem{definition}[theorem]{Definition}
\theoremstyle{remark}
\numberwithin{equation}{section}
\newcommand{\A}{\mathcal{A}}
\newcommand{\K}{\mathcal{K}}
\renewcommand{\mid}{\mathds{1}}
\newcommand{\KN}{\K_N}
\newcommand{\KaN}{\K^\alpha_N}
\renewcommand{\d}{\partial}
\newcommand{\dt}{\tilde{\d}}
\newcommand{\Der}{\operatorname{Der}}
\newcommand{\qand}{\quad\text{and}\quad}
\newcommand{\qqand}{\qquad\text{and}\qquad}
\newcommand{\lb}{\bar{\lambda}}
\newcommand{\mub}{\bar{\mu}}
\newcommand{\g}{\mathfrak{g}}
\newcommand{\ginn}{\g_{\textrm{inn}}}
\renewcommand{\dh}{\hat{\partial}}
\newcommand{\Omegag}[1]{\Omega^{#1}_{\g}}
\newcommand{\Omegabg}[1]{\bar{\Omega}^{#1}_{\g}}
\newcommand{\Omegaoneg}{\Omega^1_{\g}}
\renewcommand{\emph}[1]{\textit{#1}}
\newcommand{\otimesKN}{\otimes_{\KN}}
\newcommand{\otimesC}{\otimes_{\complex}}
\newcommand{\hh}{\hat{h}}
\newcommand{\hhi}{\hh^{-1}}
\newcommand{\Ttwotheta}{T^2_{\theta}}
\renewcommand{\Re}{\operatorname{Re}}
\renewcommand{\Im}{\operatorname{Im}}
\title[Noncommutative Riemannian geometry of Kronecker algebras]{Noncommutative Riemannian geometry\\ of Kronecker algebras}
\author{Joakim Arnlind}
\address[Joakim Arnlind]{Dept. of Math.\\
Link\"oping University\\
581 83 Link\"oping\\
Sweden}
\email{joakim.arnlind@liu.se}
\thanks{}
\subjclass[2000]{}
\keywords{}
\begin{document}

\begin{abstract}
  We study aspects of noncommutative Riemannian geometry of the path
  algebra arising from the Kronecker quiver with $N$ arrows. To start
  with, the framework of derivation based differential calculi is
  recalled together with a discussion on metrics and bimodule
  connections compatible with the $\ast$-structure of the algebra. As
  an illustration, these concepts are applied to the noncommutative
  torus where examples of torsion free and metric (Levi-Civita)
  connections are given.

  In the main part of the paper, noncommutative geometric aspects of (generalized)
  Kronecker algebras are considered. The structure of derivations and
  differential calculi is explored, and torsion free bimodule
  connections are studied together with their compatibility with
  hermitian forms, playing the role of metrics on the module of
  differential forms. Moreover, for several different choices of Lie algebras of
  derivations, non-trivial Levi-Civita connections are constructed.
\end{abstract}

\maketitle

\tableofcontents

%\newpage

\section{Introduction}

\noindent
Over the last 15 years, at lot of progress has been made in
understanding the Riemannian aspects of noncommutative geometry from
several different perspectives. For instance, the curvature of the
metric, identified from the heat kernel expansion of the Dirac
operator, has been computed
\cite{fk:scalarCurvature,cm:modularCurvature} and related to the
Gauss-Bonnet theorem \cite{ct:gaussBonnet,fk:gaussBonnet}. From a more
(Hopf) algebraic point of view, metric and connections on bimodules
(see e.g. \cite{ac:ncgravitysolutions}
\cite{as:noncommutative.connections.twists}
\cite{bm:Quantum.Riemannian.geometry} \cite{ail:lc.quantum.spheres})
have been introduced and studied from different points of view.

A fundamental aspect of classical Riemannian geometry is the existence
of a unique metric and torsion free connection -- the Levi-Civita
connection. In noncommutative geometry, the definitions of metric
compatibility and torsion free depend on the context, and the existence
and uniqueness of the Levi-Civita connection has been studied from
many different perspectives (see
e.g. \cite{bm:starCompatibleConnections,r:leviCivita,ps:on.nc.lc.connections,al:projections.nc.cylinder,w:braided.cartan.calculi,a:levi-civita.class.nms}).
For instance, starting from a derivation based approach to
noncommutative differential geometry (see
e.g. \cite{dv:calculDifferentiel,dvm:connections.central.bimodules})
the framework of real metric calculi was developed in
\cite{aw:curvature.three.sphere} (and further developed in
\cite{atn:minimal.embeddings.morphisms}) and a Gauss-Bonnet theorem
was proven for the noncommutative 4-sphere \cite{aw:cgb.sphere}. In
this context, one can prove a general uniqueness theorem for torsion
free and metric connections. One the other hand, if further conditions
are abandoned, like the reality conditions of a real metric calculus,
one expects there to be many torsion free and metric connections (see
e.g. \cite{a:levi-civita.class.nms}).

Concrete and illustrative examples have traditionally been of great use in
noncommutative geometry; both as a mean to understand abstract
concepts, but also as an inspiration for new results. In the present
paper, we study differential calculi and Levi-Civita connections of
the path algebra of generalized Kronecker quivers for different
choices of Lie algebras of derivations. These algebras turn out to be
one dimensional, in the sense that there are no differential forms of
order greater than one; a result which is independent on the choice of
Lie algebra of derivations. Despite the low-dimensionality of the
calculus, the structure of the module of 1-forms is not trivial and we
show that one may construct many interesting examples of Levi-Civita
connections.

Section~\ref{sec:der.calculus} recalls the basic concepts of
derivation based differential calculus in order to fix the notation
and terminology for our purposes. As an illustration, these concepts
are applied to the noncommutative torus in Section~\ref{sec:nc.torus}
where left module connections, as well as bimodule connections, are
constructed on the module of differential forms.

In Section~\ref{sec:kronecker.alg} we study the path algebra of the
generalized Kronecker quiver, and derive properties of the algebra and
its differential calculi. Section~\ref{sec:LC.Omegaoneg} is devoted to
the construction of Levi-Civita connections on the module of 1-forms
for different choices of Lie algebras of derivations.

\section{Derivation based differential calculus}\label{sec:der.calculus}

Let us recall the construction of noncommutative differential forms in
a derivation based calculus. Let $\A$ be a unital associative $\ast$-algebra
over $\complex$, and let $\Der(\A)$ denote the set of derivations of
$\A$. We shall consider $\Der(\A)$ to be both a left and right $Z(\A)$-module, where
$Z(\A)$ denotes the center of $\A$, in the standard way;
i.e.
\begin{align*}
  &(z\cdot \d)(a)=z\d(a)\\
  &(\d\cdot z)(a)=\d(a)z=z\d(a)=(z\cdot \d)(a)
\end{align*}
for $z\in Z(\A)$, $a\in\A$ and $\d\in\Der(\A)$. For any Lie algebra
$\g\subseteq\Der(\A)$ we shall in the following assume that $\g$ is
also a $Z(\A)$-submodule of $\Der(\A)$.  Given $\g\subseteq\Der(\A)$,
one defines $\Omegabg{k}$ to be the set of $Z(\A)$-multilinear
alternating maps
\begin{align*}
  \omega:\underbrace{\g\times\cdots\times\g}_k\to\A,
\end{align*}
and sets
\begin{align*}
  \Omegabg{} = \bigoplus_{k\geq 0}\Omegabg{k}
\end{align*}
with $\Omegabg{0}=\A$. Furthermore, for $\omega\in\Omegabg{k}$ and
$\tau\in\Omegabg{l}$ one defines $\omega\tau\in\Omegabg{k+l}$ as
\begin{align*}
  (\omega\tau)(\d_1,\ldots,\d_{k+l}) =
  \frac{1}{k!l!}\sum_{\sigma\in S_{k+l}}\sgn(\sigma)\omega(\d_{\sigma(1)},\ldots,\d_{\sigma(k)})
  \tau(\d_{\sigma(k+1)},\ldots,\d_{\sigma(k+l)}),
\end{align*}
where $S_N$ denotes the symmetric group on $N$ letters, and introduce
$d_k:\Omegabg{k}\to\Omegabg{k+1}$ as $d_0a(\d_0)=\d_0(a)$ for
$a\in\Omegabg{0}=\A$ and for $\omega\in\Omegabg{k}$ with $k\geq 1$
\begin{align*}
  d_k\omega(\d_0,\ldots,\d_{k}) =
  &\sum_{i=0}^k(-1)^i\d_i\paraa{\omega(\d_0,\ldots,\hat{\d}_{i},\ldots,\d_k)}\\
  &\qquad+\sum_{0\leq i<j\leq k}(-1)^{i+j}\omega\paraa{[\d_i,\d_j],\d_0,\ldots,\hat{\d}_i,\ldots,\hat{\d}_j,\ldots,\d_k},
\end{align*}
satisfying $d_{k+1}d_{k}=0$, where $\hat{\d}_i$ denotes the omission
of $\d_i$ in the argument. When there is no risk for confusion, we
shall omit the index $k$ and simply write
$d:\Omegabg{k}\to\Omegabg{k+1}$. For instance, if
$\omega\in\Omegabg{1}$ then
\begin{align*}
  d\omega(\d_0,\d_1) = \d_0\omega(\d_1)-\d_1\omega(\d_0)-\omega([\d_0,\d_1]). 
\end{align*}
Moreover, we note that the graded product rule is satisfied
\begin{align*}
  d(\omega\eta) = (d\omega)\eta + (-1)^k\omega d\eta 
\end{align*}
for $\omega\in\Omegabg{k}$ and $\eta\in\Omegabg{l}$. 

One can endow $\Omegabg{}$ with the structure of an $\A$-bimodule by
setting
\begin{align*}
  &(a\omega)(\d_1,\ldots,\d_k) = a\omega(\d_1,\ldots,\d_k)\\
  &(\omega a)(\d_1,\ldots,\d_k) = \omega(\d_1,\ldots,\d_k)a
\end{align*}
for $a\in\A$, and a $\ast$-structure may be introduced as
\begin{align*}
  \omega^\ast(\d_1,\ldots,\d_k) = \omega(\d_1^\ast,\ldots,\d_k^\ast)^\ast
\end{align*}
satisfying $(a\omega b)^\ast = b^\ast\omega^\ast a^\ast$, making
$\Omegabg{}$ into a $\ast$-bimodule with
\begin{align*}
  (\omega\eta)^\ast = (-1)^{kl}\eta^\ast\omega^\ast
\end{align*}
for $\omega\in\Omegabg{k}$ and $\eta\in\Omegabg{l}$. We say that
$\Omegabg{}$ is a differential calculus over $\A$. The differential
calculus is called connected if $da=0$ implies that
$a=\lambda\mid$ for some $\lambda\in\complex$.  The cohomology of a
differential calculus is introduced in the standard way
\begin{align*}
  H^n(\Omegabg{}) = \frac{\ker d_n}{\im d_{n-1}}.
\end{align*}
Let us now recall the concept of a connection on a left or right $\A$-module.
\begin{definition}
  Let $M$ be a left $\A$-module. A \emph{left connection on $M$} is a map
  $\nabla:\g\times M\to M$ such that
  \begin{align*}
    &\nabla_{\d}\paraa{m+m'} = \nabla_{\d}m + \nabla_{\d}m'\\
    &\nabla_{\d+\d'}m = \nabla_{\d}m + \nabla_{\d'}m\\
    &\nabla_{z\cdot \d}m = z\nabla_{\d}m\\
    &\nabla_{\d}(am) = a\nabla_{\d}m + (\d a)m
  \end{align*}
  for $m,m'\in M$, $\d,\d'\in\g$, $a\in\A$ and
  $z\in Z(\A)$.
\end{definition}

\begin{definition}
  Let $M$ be a right $\A$-module. A \emph{right connection on $M$} is a map
  $\nabla:\g\times M\to M$ such that
  \begin{align*}
    &\nabla_{\d}\paraa{m+m'} = \nabla_{\d}m + \nabla_{\d}m'\\
    &\nabla_{\d+\d'}m = \nabla_{\d}m + \nabla_{\d'}m\\
    &\nabla_{\d\cdot z}m = (\nabla_{\d}m)z\\
    &\nabla_{\d}(ma) = (\nabla_{\d}m)a + m(\d a)
  \end{align*}
  for $m,m'\in M$, $\d,\d'\in\g$, $a\in\A$ and
  $z\in Z(\A)$.
\end{definition}

\noindent
As in differential geometry, the curvature $R$ of a
connection $\nabla$ is defined as
\begin{align*}
  R(\d_1,\d_2)m = \nabla_{\d_1}\nabla_{\d_2}m-\nabla_{\d_2}\nabla_{\d_1}m
  -\nabla_{[\d_1,\d_2]}m.
\end{align*}

\noindent
Now, assume that $\nabla$ is both a left and right connection on the
$\A$-bimodule $M$. In particular, one has
\begin{align*}
  z\nabla_{\d}m = \nabla_{z\cdot \d}m = \nabla_{\d\cdot z}m = (\nabla_{\d}m)z
\end{align*}
for all $m\in M$, $\d\in\g$ and $z\in Z(\A)$. Hence, a natural setting
for bimodule connections is in the context of central bimodules.

\begin{definition}
  A \emph{central $\A$-bimodule} is an $\A$-bimodule $M$ such that
  \begin{align*}
    zm = mz
  \end{align*}
  for all $m\in M$ and $z\in Z(\A)$.
\end{definition}

\noindent
Note that it follows that $\Omegabg{}$ is a central bimodule since
\begin{align*}
  (z\omega)(\d_1,\ldots,\d_k)
  = z\omega(\d_1,\ldots,\d_k)
  = \omega(\d_1,\ldots,\d_k)z
  = (\omega z)(\d_1,\ldots,\d_k)
\end{align*}
for $z\in Z(\A)$.

\begin{definition}
  Let $M$ be a central $\A$-bimodule. If $\nabla$ is a both a left and
  a right connection on $M$ then $\nabla$ is called a \emph{bimodule
    connection on $M$}.
\end{definition}

\noindent
In case the module $M$ is a $\ast$-bimodule, one can ask that the
connection is compatible with the involution in the following sense.

\begin{definition}
  Let $\nabla:\g\times M\to M$ be a left connection on the
  $\ast$-bimodule $M$. The connection is called a left
  $\ast$-connection if
  \begin{equation*}
    \paraa{\nabla_\d m}^\ast = \nabla_{\d^\ast}m^\ast
  \end{equation*}
  for all $\d\in\g$ and $m\in M$.
\end{definition}

\begin{proposition}\label{prop:l.st.conn.bimodule.conn}
  If $\nabla$ is a left $\ast$-connection on a central $\ast$-bimodule
  $M$ then $\nabla$ is a bimodule connection on $M$.
\end{proposition}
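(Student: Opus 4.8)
We are given a left $\ast$-connection $\nabla$ on a central $\ast$-bimodule $M$, and we must produce a right connection structure on $M$ making $\nabla$ into a bimodule connection. The only reasonable candidate for the right connection is forced by the $\ast$-structure: since $(\nabla_\d m)^\ast = \nabla_{\d^\ast}m^\ast$ should relate the left action to the right action, I would \emph{define}
\begin{equation*}
  \nabla_\d m := \paraa{\nabla_{\d^\ast}m^\ast}^\ast
\end{equation*}
and simply check that this single map $\nabla$ (which by the $\ast$-connection hypothesis agrees with the given one) satisfies the four axioms of a right connection. In other words, there is nothing new to define; one shows that a left $\ast$-connection \emph{automatically} obeys the Leibniz rule on the right.

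**Key steps.** First, additivity in $m$ and in $\d$, and $\complex$-linearity, are immediate from the corresponding left-connection properties together with the fact that $\ast$ is conjugate-linear and additive and an involution. Second, for the $Z(\A)$-covariance on the right, take $z\in Z(\A)$ and compute
\begin{equation*}
  \nabla_{\d\cdot z}m = \paraa{\nabla_{(\d\cdot z)^\ast}m^\ast}^\ast
  = \paraa{\nabla_{z^\ast\cdot \d^\ast}m^\ast}^\ast
  = \paraa{z^\ast\nabla_{\d^\ast}m^\ast}^\ast
  = \paraa{\nabla_{\d^\ast}m^\ast}^\ast z
  = (\nabla_\d m)z,
\end{equation*}
using $(\d\cdot z)^\ast = z^\ast\cdot\d^\ast$ for the induced $\ast$ on $\Der(\A)$ (so that $\g$ is a $\ast$-closed submodule — this is implicit in the setup, and I would state it explicitly), and $(z^\ast b)^\ast = b^\ast z$. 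Third, the right Leibniz rule: for $a\in\A$,
\begin{equation*}
  \nabla_\d(ma) = \paraa{\nabla_{\d^\ast}(ma)^\ast}^\ast
  = \paraa{\nabla_{\d^\ast}(a^\ast m^\ast)}^\ast
  = \paraa{a^\ast\nabla_{\d^\ast}m^\ast + (\d^\ast a^\ast)m^\ast}^\ast
  = (\nabla_\d m)a + m\overline{(\d^\ast a^\ast)}^{\,\ast},
\end{equation*}
and then I must identify $\paraa{(\d^\ast a^\ast)}^\ast$ with $\d(a)$; this holds precisely because $\d^\ast$ is \emph{defined} by $\d^\ast(a) = \paraa{\d(a^\ast)}^\ast$, so $\paraa{\d^\ast(a^\ast)}^\ast = \d(a)$. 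Hence $\nabla_\d(ma) = (\nabla_\d m)a + m(\d a)$, which is the right Leibniz rule. Finally, one invokes the definition of a bimodule connection on a central bimodule to conclude; it is worth remarking that the compatibility $z\nabla_\d m = (\nabla_\d m)z$ required there is automatic since $M$ is central.

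**Main obstacle.** There is no deep obstacle; the proof is a bookkeeping exercise in how $\ast$ interacts with the adjoint $\d\mapsto\d^\ast$ of a derivation and with the $Z(\A)$-module structure on $\Der(\A)$. The one point that needs care — and which I would make explicit as a preliminary remark — is the definition of $\d^\ast$ and the fact that $\g$ is assumed $\ast$-stable, so that $\d^\ast\in\g$ whenever $\d\in\g$; without that, the expression $\nabla_{\d^\ast}m^\ast$ would not even be defined. Once that is in place, each of the four right-connection axioms follows in one or two lines by applying $\ast$ to the corresponding left-connection identity.
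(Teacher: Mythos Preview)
Your proof is correct and follows essentially the same approach as the paper: the key computation is the right Leibniz rule, which you derive exactly as the paper does by applying the $\ast$-connection identity to $\nabla_{\d^\ast}(a^\ast m^\ast)$ and unwinding. The paper is terser and only writes out that step, treating the other right-connection axioms as evident; in particular, note that the $Z(\A)$-covariance $\nabla_{\d\cdot z}m = (\nabla_\d m)z$ follows more directly from the left-connection axiom $\nabla_{z\cdot\d}m = z\nabla_\d m$ together with $\d\cdot z = z\cdot\d$ and centrality of $M$, so your detour through $(\d\cdot z)^\ast = z^\ast\cdot\d^\ast$ is not needed.
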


\begin{proof}
  Since $\nabla$ is a left $\ast$-connection and $M$ is a $\ast$-bimodule one has
  \begin{align*}
    \nabla_\d (ma) &= \paraa{\nabla_{\d^\ast}a^\ast m^\ast}^\ast
    =\paraa{a^\ast\nabla_{\d^\ast}m^\ast+(\d^\ast a^\ast)m^\ast}^\ast\\
                   &=\paraa{\nabla_{\d^\ast}m^\ast}^\ast a + m(\d^\ast a^\ast)^\ast
                     = (\nabla_{\d}m)a + m(\d a)
  \end{align*}
  for $\d\in\g$, $m\in M$ and $a\in\A$, showing that $\nabla$ is also a right connection on $M$.
\end{proof}

\noindent
A left $\ast$-connection on a central $\ast$-bimodule will be called a
$\ast$-bimodule connection. The above lemma shows that bimodule
connections arise naturally on $\ast$-bimodules when requiring the
connection to be compatible with the $\ast$-structure.

In noncommutative geometry, a finitely generated projective module $M$
corresponds to a vector bundle and metrics on vector bundles can be
introduced as hermitian forms on $M$. Let us recall the definition.

\begin{definition}
  Let $M$ be a left $\A$-module. A \emph{left hermitian form on $M$}
  is a map $h:M\times M\to\A$ such that
  \begin{align*}
    &h(m_1+m_2,m_3) = h(m_1+m_2,m_3)\\
    &h(am_1,m_2) = ah(m_1,m_2)\\
    &h(m_1,m_2)^\ast = h(m_2,m_1)
  \end{align*}
  for all $m_1,m_2,m_3\in M$ and $a\in\A$.
\end{definition}

\noindent
Similarly, a right hermitian form on a right $\A$-module $M$ is a bilinear map satisfying
\begin{align*}
  h(m_1,m_2a) = h(m_1,m_2)a
\end{align*}
together with $h(m_1,m_2)^\ast=h(m_2,m_1)$ for $m_1,m_2\in M$ and
$a\in M$.  In the case of $\ast$-bimodules, one can combine the
concepts of left and right hermitian forms into $\ast$-bimodule forms.

\begin{definition}
  Let $M$ be a $\ast$-bimodule. A $\ast$-bimodule form on $M$ is
  a $\complex$-bilinear map $g:M\times M\to\A$ such that
  \begin{align}
    &g(m_1,m_2)^\ast = g(m_2^\ast,m_1^\ast)\label{eq:star.bimodule.form.starprop}\\
    &g(am_1,m_2) = ag(m_1,m_2)\label{eq:star.bimodule.form.leftlinear}
  \end{align}
  for all $m_1,m_2\in M$ and $a\in\A$.
\end{definition}

\noindent
Note that it follows from \eqref{eq:star.bimodule.form.starprop} and
\eqref{eq:star.bimodule.form.leftlinear} that
\begin{align}
  g(m_1,m_2 a) = g(m_1,m_2)a
\end{align}
for all $m_1,m_2\in\A$ and $a\in\A$.

Given a $\ast$-bimodule form $g$ on $M$ we note that
$h_L(m_1,m_2)=g(m_1,m_2^\ast)$ is a left hermitian form on $M$ and
$h_R(m_1,m_2)=g(m_1^\ast,m_2)$ is a right hermitian form on
$M$. Conversely, given a left hermitian form $h$ on $M$, one obtains a
$\ast$-bimodule form $g(m_1,m_2)=h(m_1,m_2^\ast)$ as well as a right
hermitian form $h_R(m_1,m_2)=h(m_1^\ast,m_2^\ast)$.

As in Riemannian geometry, one is interested in connections that
preserve the metric. Hence, one makes the following definitions.

\begin{definition}
  Let $M$ be a left $\A$-module. A left connection $\nabla$ on $M$ is
  compatible with a left hermitian form $h$ if
  \begin{align*}
    &\d h(m_1,m_2) = h(\nabla_\d m_1,m_2) + h(m_1,\nabla_{\d^\ast}m_2)
  \end{align*}
  for all $m_1,m_2\in M$ and $\d\in\g$.
\end{definition}

\begin{definition}
  Let $M$ be a right $\A$-module. A right connection $\nabla$ on $M$ is
  compatible with a right hermitian form $h$ if
  \begin{align*}
    &\d h(m_1,m_2) = h(\nabla_{\d^\ast} m_1,m_2) + h(m_1,\nabla_{\d}m_2)
  \end{align*}
  for all $m_1,m_2\in M$ and $\d\in\g$.
\end{definition}

\begin{definition}
  Let $M$ be a $\ast$-bimodule and let $g$ be a $\ast$-bimodule
  form on $M$. A left (right) connection $\nabla:\g\times M\to M$ is
  compatible with $g$ if
  \begin{align*}
    \d g(m_1,m_2) = g(\nabla_\d m_1,m_2) + g(m_1,\nabla_{\d}m_2)
  \end{align*}
  for all $\d\in\g$ and $m_1,m_2\in M$.
\end{definition}

\noindent
For $\ast$-bimodule connections, the above concepts of compatibility
are equivalent in the following sense.

\begin{proposition}\label{prop:bimodule.comp.gives.lr.comp}
  Let $M$ be a $\ast$-bimodule and let $g$ be a $\ast$-bimodule
  form on $M$ and set $h_L(m_1,m_2) = g(m_1,m_2^\ast)$ and
  $h_R(m_1,m_2) = g(m_1^\ast,m_2)$.  If $\nabla:\g\times M\to M$ is a
  $\ast$-bimodule connection compatible with $g$ then
  \begin{align*}
    &\d h_L(m_1,m_2) = h_L(\nabla_\d m_1,m_2) + h_L(m_1,\nabla_{\d^\ast}m_2)\\
    &\d h_R(m_1,m_2) = h_R(\nabla_{\d^\ast} m_1,m_2) + h_R(m_1,\nabla_{\d}m_2)
  \end{align*}
  for all $\d\in\g$ and $m_1,m_2\in M$. Conversely, if $\nabla$ is a
  $\ast$-bimodule connection compatible with a left hermitian form
  $h_L$ then $\nabla$ is compatible with the right hermitian form
  $h_R(m_1,m_2)=h_L(m_1^\ast,m_2^\ast)$ as well as the $\ast$-bimodule
  form $g(m_1,m_2)=h_L(m_1,m_2^\ast)$.
\end{proposition}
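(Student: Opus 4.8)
The plan is to verify each compatibility identity by direct substitution, using the definitions $h_L(m_1,m_2)=g(m_1,m_2^\ast)$ and $h_R(m_1,m_2)=g(m_1^\ast,m_2)$ together with the $g$-compatibility hypothesis $\d g(m_1,m_2)=g(\nabla_\d m_1,m_2)+g(m_1,\nabla_\d m_2)$ and the $\ast$-connection property $(\nabla_\d m)^\ast=\nabla_{\d^\ast}m^\ast$. The only mildly subtle point is bookkeeping with involutions on derivations, so I would set out the relevant facts first: since $\g$ is a Lie algebra of derivations closed under $\ast$ (implicit in the setup), and since $\nabla$ is a $\ast$-bimodule connection, one has $(\nabla_\d m)^\ast=\nabla_{\d^\ast}m^\ast$, and by Proposition~\ref{prop:l.st.conn.bimodule.conn} it is automatically a bimodule connection. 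I would also record $\d^\ast(a^\ast)=(\d a)^\ast$ for a $\ast$-derivation, which is what lets me move $\ast$ past the action of $\d$.

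For the left identity, I would compute $\d h_L(m_1,m_2)=\d\, g(m_1,m_2^\ast)$ and apply $g$-compatibility with the pair $(m_1,m_2^\ast)$ to get $g(\nabla_\d m_1,m_2^\ast)+g(m_1,\nabla_\d m_2^\ast)$. The first term is $h_L(\nabla_\d m_1,m_2)$ by definition. For the second term I use the $\ast$-connection property: $\nabla_\d m_2^\ast=\nabla_\d(m_2^\ast)=(\nabla_{\d^\ast}m_2)^\ast$ (reading the $\ast$-connection identity with $m_2^\ast$ in place of $m$ and $\d$ in place of $\d^\ast$), so $g(m_1,\nabla_\d m_2^\ast)=g(m_1,(\nabla_{\d^\ast}m_2)^\ast)=h_L(m_1,\nabla_{\d^\ast}m_2)$. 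This gives the claimed formula for $h_L$. The argument for $h_R$ is the mirror image: start from $\d h_R(m_1,m_2)=\d\, g(m_1^\ast,m_2)$, apply $g$-compatibility to the pair $(m_1^\ast,m_2)$, and rewrite $\nabla_\d m_1^\ast=(\nabla_{\d^\ast}m_1)^\ast$ to recognize the first term as $h_R(\nabla_{\d^\ast}m_1,m_2)$ while the second is directly $h_R(m_1,\nabla_\d m_2)$.

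For the converse, suppose $\nabla$ is a $\ast$-bimodule connection compatible with a left hermitian form $h_L$; set $h_R(m_1,m_2)=h_L(m_1^\ast,m_2^\ast)$ and $g(m_1,m_2)=h_L(m_1,m_2^\ast)$. To check $g$-compatibility, compute $\d g(m_1,m_2)=\d h_L(m_1,m_2^\ast)=h_L(\nabla_\d m_1,m_2^\ast)+h_L(m_1,\nabla_{\d^\ast}m_2^\ast)$ by $h_L$-compatibility, then turn $\nabla_{\d^\ast}m_2^\ast=(\nabla_\d m_2)^\ast$ via the $\ast$-connection property to get $h_L(m_1,(\nabla_\d m_2)^\ast)=g(m_1,\nabla_\d m_2)$, while the first term is $g(\nabla_\d m_1,m_2)$ by definition; hence $\d g(m_1,m_2)=g(\nabla_\d m_1,m_2)+g(m_1,\nabla_\d m_2)$. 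Compatibility with $h_R$ then follows either by the same style of computation starting from $\d h_R(m_1,m_2)=\d h_L(m_1^\ast,m_2^\ast)$, or simply by invoking the first half of the proposition now that $g$-compatibility has been established. I expect no real obstacle here; the one thing to be careful about is consistently applying the $\ast$-connection identity in the correct direction (it is symmetric under $\d\leftrightarrow\d^\ast$, $m\leftrightarrow m^\ast$, since $\d^{\ast\ast}=\d$ and $m^{\ast\ast}=m$), and making sure the hypothesis that $\nabla$ is a bimodule connection (not merely a left connection) is not actually needed beyond what Proposition~\ref{prop:l.st.conn.bimodule.conn} already gives — indeed the $\ast$-connection property alone drives every step.
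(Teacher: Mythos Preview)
Your proof is correct and follows essentially the same approach as the paper: both arguments proceed by direct substitution using the definitions of $h_L$, $h_R$, $g$ in terms of one another, together with the $\ast$-connection identity $(\nabla_\d m)^\ast=\nabla_{\d^\ast}m^\ast$ to move the involution past $\nabla$. The only cosmetic difference is that the paper starts from the right-hand side of each compatibility identity and works back to the left, whereas you start from the left and work forward.
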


\begin{proof}
  Assume that $\nabla$ is a $\ast$-bimodule connection compatible with the $\ast$-bimodule form $g$.
  One easily checks that
  \begin{align*}
    h_L(\nabla_\d m_1,m_2)
    &+ h_L(m_1,\nabla_{\d^\ast}m_2)
      = g(\nabla_{\d}m_1,m_2^\ast)+g\paraa{m_1,(\nabla_{\d^\ast}m_2)^\ast}\\
    &= g(\nabla_{\d}m_1,m_2^\ast) + g(m_1,\nabla_{\d}m_2^\ast)
      = \d g(m_1,m_2^\ast) = \d h_L(m_1,m_2),
  \end{align*}
  using that $\nabla$ is a $\ast$-connection compatible with $g$. The
  proof that $\nabla$ is compatible with $h_R$ is analogous. Now,
  assume that $\nabla$ is a $\ast$-bimodule connection compatible with
  the left hermitian form $h_L$. Then
  \begin{align*}
    h_R(\nabla_{\d^\ast}m_1,m_2)
    &+ h_R(m_1,\nabla_{\d}m_2)
      = h_L\paraa{(\nabla_{\d^\ast}m_1)^\ast,m_2^\ast}
      +h_L\paraa{m_1^\ast,(\nabla_{\d}m_2)^\ast}\\
    &= h_L(\nabla_{\d}m_1^\ast,m_2^\ast) + h_L(m_1^\ast,\nabla_{\d^\ast}m_2^\ast)
      = \dh_L(m_1^\ast,m_2^\ast) = \d h_R(m_1,m_2)
  \end{align*}
  and
  \begin{align*}
    g(\nabla_{\d}m_1,m_2)
    &+g(m_1,\nabla_{\d}m_2)
      = h_L(\nabla_{\d}m_1,m_2^\ast) + h_L\paraa{m_1,(\nabla_{\d}m_2)^\ast}\\
    &=h_L(\nabla_{\d}m_1,m_2^\ast) + h_L(m_1,\nabla_{\d^\ast}m_2^\ast)
      =\d h_L(m_1,m_2^\ast) = \d g(m_1,m_2),
  \end{align*}
  showing that $\nabla$ is indeed compatible with $h_R$ and $g$.
\end{proof}

\noindent 
Proposition~\ref{prop:bimodule.comp.gives.lr.comp} together with
Proposition~\ref{prop:l.st.conn.bimodule.conn} show that for
$\ast$-connections on a $\ast$-bimodule, the concepts of
left-/right-/bimodule connections coincide together with their
corresponding concepts of compatibility with hermitian forms.

Although the above definitions give a natural concept of metric
compatibility for a connection, there is in general
not a unique concept of torsion for arbitrary modules (although one
can define torsion via an \emph{anchor map}, see
\cite{aw:curvature.three.sphere}). However, for $\Omegabg{1}$ one
introduces torsion in analogy with differential geometry.

\begin{definition}
  The \emph{torsion} of a left (right) connection
  $\nabla$ on $\Omegabg{1}$ is given by the map
  $T:\Omegabg{1}\times\g\times\g\to\A$, defined by
  \begin{align}\label{eq:def.torsion}
    T_\omega(\d,\d') = (\nabla_{\d}\omega)(\d')-(\nabla_{\d'}\omega)(\d)
    -d\omega(\d,\d').
  \end{align}
  The connection is called \emph{torsion free} if $T_\omega(\d,\d')=0$ for all
  $\d,\d'\in\g$ and $\omega\in\Omegabg{1}$.
\end{definition}

\noindent
It is easy to check that $T_{a\omega}(\d,\d')=aT_\omega(\d,\d')$ for a
left connection (and $T_{\omega a}(\d,\d')=T_{\omega}(\d,\d')a$ for a
right connection), implying that the induced map
$T(\d,\d'):\Omegabg{1}\to\A$ is a left (right) module
homomorphism.

Now, we are ready to discuss torsion free connections
compatible with a hermitian form on $\Omegabg{1}$, so called
\emph{Levi-Civita connections}.

\begin{definition}
  Let $h$ be a left (right) hermitian form on $\Omegabg{1}$. A \emph{left
     (right) Levi-Civita connection $\nabla$ on $\Omegabg{1}$ with respect to
    $h$} is a torsion free left (right) connection on $\Omegabg{1}$ compatible
  with $h$.
\end{definition}

\begin{definition}
  Let $g$ be a $\ast$-bimodule form on $\Omegabg{1}$. A
  \emph{$\ast$-bimodule Levi-Civita connection on $\Omegabg{1}$ with
    respect to $g$} is a torsion free $\ast$-bimodule connection on
  $\Omegabg{1}$ compatible with $g$.
\end{definition}

\noindent
It follows from Proposition~\ref{prop:bimodule.comp.gives.lr.comp}
that a $\ast$-bimodule Levi-Civita connection with respect to $g$ is also a left and right
Levi-Civita connection with respect to the associated left and right hermitian forms.

\subsection{Restricted calculi}

In the following, we shall mostly be interested in so called restricted calculi
$\Omegag{}\subseteq\Omegabg{}$, which is generated (as a left module) in degree one by
$da$ for $a\in\A$. That is, $\Omegag{k}$ is generated by elements of the form
\begin{align*}
  a_0da_1da_2\cdots da_k
\end{align*}
for $a_0,a_1,\ldots,a_k\in\A$, acting as
\begin{align*}
  a_0da_1\cdots da_k(\d_1,\ldots,\d_k) =
  \sum_{\sigma\in S_k}\sgn(\sigma)a_0da_1(\d_{\sigma(1)})\cdots da_k(\d_{\sigma(k)})
\end{align*}
for $\d_1,\ldots,\d_k\in\g$.  One readily checks that $\Omegag{k}$ is
closed under the right action of $\A$ by repeatedly using
$(da)b=d(ab)-adb$ for $a,b\in\A$, giving
\begin{align*}
  da_1da_2\cdots da_k\cdot a_{k+1}
  &= \sum_{l=1}^k(-1)^{k-l}da_1\cdots d(a_la_{l+1})\cdots da_k da_{k+1}\\
      &\qquad+(-1)^ka_1da_2\cdots da_k da_{k+1}
\end{align*}
which is clearly an element of $\Omegag{k}$. Furthermore, one can show that
\begin{align*}
  d\paraa{a_0da_1da_2\cdots da_k} = da_0da_1da_2\cdots da_k
\end{align*}
which is an element of $\Omegag{k+1}$. Thus, $\Omegag{}$ is a
differential subalgebra of $\Omegabg{}$, and the introduced concepts
of hermitian forms, connections and torsion apply equally well to this
subalgebra.

\section{The noncommutative torus}\label{sec:nc.torus}

\noindent
Let us illustrate the above concepts by considering the noncommutative
torus. The noncommutative torus $\Ttwotheta$ is a $\ast$-algebra
generated by unitary $U,V$ satisfying $VU=qUV$ with
$q=e^{2\pi i\theta}$. We shall assume that $\theta$ is irrational,
implying that the center of $\Ttwotheta$ is trivial,
i.e. $Z(\Ttwotheta)=\{\lambda\mid:\lambda\in\complex\}$. Any element
$a\in\Ttwotheta$ can be written as
\begin{align*}
  a = \sum_{k,l\in\integers}a_{kl}U^kV^l
\end{align*}
where $a_{kl}\in\complex$, and one introduces the standard derivations
$\d_1,\d_2$, defined by
\begin{align*}
  &\d_1U = iU\qquad \d_1V = 0\\
  &\d_2U = 0 \qquad \d_2V = iV
\end{align*}
and it follows that $[\d_1,\d_2]=0$ as well as $\d_a^\ast=\d_a$ for
$a=1,2$. Since $Z(\A)$ is trivial, the complex (abelian) Lie algebra $\g$
generated by $\d_1,\d_2$ is a $Z(\A)$-module.

The restricted differential calculus $\Omegag{}$ is generated in first
degree by $dU$ and $dV$, and we note that $\{dU,dV\}$ is a basis for
$\Omegaoneg$:
\begin{align*}
  &adU+bdV = 0\implies
  \begin{cases}
    adU(\d_1)+bdV(\d_1) = 0\\
    adU(\d_2)+bdV(\d_2) = 0
  \end{cases}\implies\\
  &
    \begin{cases}
      iaU = 0\\
      ibV = 0
    \end{cases}\implies
        a=b=0.
\end{align*}
Thus, $\Omegaoneg$ is a free module, and the bimodule structure can be summarized as follows:
\begin{equation}
  \begin{split}
    (dU)U &= UdU \qquad (dU)V = q^{-1}V(dU)\\
    (dV)V &= VdV \qquad (dV)U = qUdV.            
  \end{split}
\end{equation}
For instance, checking that $(dU)V=q^{-1}VdU$ amounts to showing that
\begin{align*}
  &\paraa{(dU)V}(\d_1) = (\d_1U)V = iUV = iq^{-1}VU=q^{-1}(VdU)(\d_1)\\
  &\paraa{(dU)V}(\d_2) = (\d_2U)V = 0 = q^{-1}(VdU)(\d_2).
\end{align*}
Moreover, one can easily check that $\Omegag{}$ is a connected
calculus, since
\begin{align*}
  &\d_1\paraa{a_{kl}U^kV^l} = 0\implies ika_{kl}U^kV^l = 0\implies
  a_{kl} = 0\text{ for }k\neq 0.\\
  &\d_2\paraa{a_{kl}U^kV^l} = 0\implies ila_{kl}U^kV^l = 0\implies
  a_{kl} = 0\text{ for }l\neq 0,
\end{align*}
and the only possible nonzero coefficient is given by $a_{00}$, and
one concludes that
\begin{align*}
  da = 0\equivalent
  \begin{cases}
    \d_1a = 0\\
    \d_2a = 0
  \end{cases}\equivalent
  a = \lambda\mid
\end{align*}
for $\lambda\in\complex$.  A slightly more convenient basis of
$\Omegaoneg$ is given by
\begin{align*}
  \omega^1 = \omega = -iU^{-1}dU\qand
  \omega^2 = \eta = -iV^{-1}dV
\end{align*}
satisfying
\begin{align*}
  &\omega^a(\d_b) = \delta^a_b\mid\qquad
    (\omega^a)^\ast = \omega^a\qquad [\omega^a,f] = 0\\
  &\omega\eta = -\eta\omega\qquad d\omega=d\eta = 0
\end{align*}
for all $f\in\Ttwotheta$ and $a,b=1,2$. We note that $\omega\eta$ is a basis of $\Omegag{2}$ since
\begin{align*}
  &a\omega\eta = 0\implies
  a\omega\eta(\d_1,\d_2) = 0\implies\\
  &a\paraa{\omega(\d_1)\eta(\d_2)-\omega(\d_2)\eta(\d_1)} = 0
  \implies a=0.
\end{align*}
Let us now compute the cohomology of the differential calculus $\Omegag{}$.

\begin{proposition}
  The cohomology of $\Omegag{}$ is given by
  \begin{align*}
    H^0(\Omegag{}) = \complex\qquad
    H^1(\Omegag{}) = \complex^2\qquad
    H^2(\Omegag{}) = \complex.
  \end{align*}
\end{proposition}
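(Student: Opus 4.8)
The plan is to compute each cohomology group directly from the explicit description of $\Omegag{}$ established above, using the basis $\{\omega,\eta\}$ of $\Omegaoneg$ and the basis $\{\omega\eta\}$ of $\Omegag{2}$, together with the fact that $d\omega = d\eta = 0$ and that all higher forms vanish. Writing a general element of $\Omegag{1}$ as $f\omega + g\eta$ with $f,g\in\Ttwotheta$ and expanding in the Fourier basis $f = \sum f_{kl}U^kV^l$, the key computational input is the formula $d(U^kV^l) = i(kU^kV^l\omega + lU^kV^l\eta)$, which follows from $dU = iU\omega$, $dV = iV\eta$, the Leibniz rule, and the commutation relations $[\omega,f]=[\eta,f]=0$.

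First I would handle $H^0$: this is just $\ker d_0$, and connectedness of the calculus (already verified in the text) gives $\ker d_0 = \complex\mid$, so $H^0 = \complex$. Next, for $H^2$: since there are no $3$-forms, $H^2 = \Omegag{2}/\im d_1$, so I must compute the image of $d_1$. For $\alpha = f\omega + g\eta$, using $d\omega = d\eta = 0$ and the Leibniz rule one gets $d\alpha = df\,\omega + dg\,\eta = (\d_2 f - \d_1 g)\,\eta\omega$ up to sign bookkeeping — more precisely $d\alpha(\d_1,\d_2) = \d_1\alpha(\d_2) - \d_2\alpha(\d_1) - \alpha([\d_1,\d_2]) = \d_1 g - \d_2 f$. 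Expanding in Fourier modes, $\d_1 g - \d_2 f = \sum i(k g_{kl} - l f_{kl})U^kV^l\cdot(\omega\eta)$-coefficient, and since for each $(k,l)\neq(0,0)$ one can solve $kg_{kl} - lf_{kl} = c_{kl}$ for arbitrary $c_{kl}$, while the $(0,0)$ mode can never be hit, the image of $d_1$ is exactly $\{h\,\omega\eta : h_{00} = 0\}$. Hence $H^2 = \Omegag{2}/\im d_1 \cong \complex$, represented by $\omega\eta$.

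Finally, for $H^1$: I need $\ker d_1 / \im d_0$. From the computation above, $f\omega + g\eta \in \ker d_1$ iff $\d_1 g = \d_2 f$, i.e. $k g_{kl} = l f_{kl}$ for all $(k,l)$. For $(k,l)\neq(0,0)$ this forces $(f_{kl},g_{kl})$ to be a multiple of $(k,l)$, which is exactly the form of the Fourier coefficients of $d(U^kV^l)/(ikl)$-type expressions — more carefully, one checks that any closed form with vanishing $(0,0)$-components of $f$ and $g$ lies in $\im d_0$ by exhibiting an explicit primitive mode by mode (for a mode with $l\neq 0$, use $(i l)^{-1}$ times the appropriate multiple of $U^kV^l$; handle $l=0,k\neq0$ symmetrically; consistency on modes with $k,l$ both nonzero follows from the kernel condition). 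The two components $f_{00}$ and $g_{00}$ are unconstrained by the closedness condition and are not in the image of $d_0$ (which has no constant term), so they survive; thus $H^1$ is spanned by the classes of $\omega$ and $\eta$, giving $H^1 \cong \complex^2$.

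The main obstacle is the bookkeeping in the $H^1$ computation: showing that the kernel condition $kg_{kl} = lf_{kl}$ together with $f_{00}=g_{00}=0$ implies exactness requires assembling a primitive whose Fourier series converges in $\Ttwotheta$ (the smooth, rapidly-decreasing completion), so one should remark that dividing Fourier coefficients by $k$ or $l$ preserves rapid decay, making the primitive a genuine algebra element. Everything else ($H^0$ from connectedness, $H^2$ from the surjectivity-onto-non-constant-modes argument) is routine once the formula $d(f\omega+g\eta) = (\d_1 g - \d_2 f)\,\omega\eta$ is in hand.
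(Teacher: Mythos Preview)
Your proposal is correct and follows essentially the same approach as the paper: compute mode by mode in the Fourier basis $U^kV^l$, using $d(U^kV^l)=iU^kV^l(k\omega+l\eta)$ to identify closed and exact forms in each degree. The paper likewise obtains the closedness condition $kb_{kl}=la_{kl}$ (your $kg_{kl}=lf_{kl}$), parametrizes the non-constant closed modes as $(a_{kl},b_{kl})=(kr_{kl},lr_{kl})$, and reads off that these are exactly the exact forms, leaving the classes of $\omega$ and $\eta$; for $H^2$ it shows the image of $d_1$ misses only the constant mode. Your remark about rapid decay of the primitive is an extra point of care that the paper does not explicitly address.
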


\begin{proof}
  Since $\Omegag{}$ is connected one finds that
  \begin{align*}
    H^0(\Omegag{}) = \{a\in\Ttwotheta: da = 0\} = \{\lambda\mid:\lambda\in\complex\}=\complex.
  \end{align*}
  Let us now consider $H^2(\Omegag{})$. First, one notes that
  \begin{align*}
    d(U^k) = ikU^k\omega\qand
    d(V^k) = ikV^k\eta
  \end{align*}
  giving $d(U^kV^l)=iU^kV^l(k\omega + l\eta)$.  Since $\dim(\g)=2$
  any element of $\Omegag{2}$ is closed, so to compute
  $H^2(\Omegag{})$ one has to find out which 2-forms that are not
  exact. An exact 2-form can be written as $d\rho$ for
  $\rho = a\omega + b\eta\in\Omegag{1}$; writing $a=a_{kl}U^kV^l$ and
  $b=b_{kl}U^kV^l$, one obtains 
  \begin{equation}\label{eq:drho}
    \begin{split}      
    d\rho &= (da)\omega + (db)\eta =
    \sum_{k,l\in\integers}ia_{kl}U^kV^l\paraa{k\omega + l\eta}\omega
    +\sum_{k,l\in\integers}ib_{kl}U^kV^l\paraa{k\omega + l\eta}\eta\\
    &=
    \sum_{k,l\in\integers}i\paraa{kb_{kl}-la_{kl}}U^kV^l\omega\eta,
    \end{split}
  \end{equation}
  implying that $(c_{kl}U^kV^l)\omega\eta$ is exact if and only if
  $c_{00}=0$. Hence,
  \begin{align*}
    H^2(\Omegag{}) = \{\lambda\omega\eta : \lambda\in\complex\} = \complex.
  \end{align*}
  Finally, let us consider $H^1(\Omegag{})$. Let
  $\rho=a\omega + b\eta$ be a closed $1$-form, giving (using \eqref{eq:drho})
  \begin{align*}
    \sum_{k,l\in\integers}i\paraa{kb_{kl}-la_{kl}}U^kV^l\omega\eta = 0\implies
    \sum_{k,l\in\integers}i\paraa{kb_{kl}-la_{kl}}U^kV^l = 0
  \end{align*}
  since $\omega\eta$ is a basis of $H^2(\Omegag{})$. Thus, 
  \begin{equation}\label{eq:drhoab}
    d\rho = 0\equivalent kb_{kl} = la_{kl}
  \end{equation}
  for all $k,l\in\integers$, giving $a_{0k}=b_{k0}=0$ for $k\neq
  0$. Such coefficients can be parametrized by $r_{kl}\in\complex$ as
  \begin{equation}\label{aklbklr}
    b_{kl} = lr_{kl}\qand
    a_{kl} = kr_{kl}
  \end{equation}
  for $(k,l)\neq(0,0)$, leaving $a_{00}$ and $b_{00}$ arbitrary.  On
  the other hand, exact 1-forms are given by
  \begin{align*}
    \rho = dc = d\sum_{k,l\in\integers}c_{kl}U^kV^l
    = \sum_{k,l\in\integers}ikc_{kl}U^kV^l\omega+\sum_{k,l\in\integers}ilc_{kl}U^kV^l\eta,
  \end{align*}
  and comparing with \eqref{aklbklr} one concludes that the
  closed 1-forms that are not exact can be represented by
  $\rho = a_{00}\omega + b_{00}\eta$ for $a_{00},b_{00}\in\complex$;
  hence
  \begin{equation}
    H^1(\Omegag{}) = \{\lambda\omega + \mu\eta:\lambda,\mu\in\complex\} \simeq \complex^2.\qedhere
  \end{equation}
\end{proof}

\subsection{Levi-Civita connections}

Let us now consider connections on $\Omegaoneg$. Since
$\{\omega^1,\omega^2\}$ is a basis of $\Omegaoneg$, (left) connections can be
introduced as
\begin{equation}\label{eq:left.conn.def}
  \nabla_{\d_a}\omega^b = \Gamma_{ac}^b\omega^c
\end{equation}
for arbitrary $\Gamma_{ac}^b\in\Ttwotheta$. The connection is torsion
free if
\begin{align*}
  0 = \paraa{\nabla_{\d_1}\omega^a}(\d_2)-\paraa{\nabla_{\d_2}\omega^a}(\d_1)
  =\Gamma_{1c}^a\omega^c(\d_2) - \Gamma_{2c}^a\omega^c(\d_1)
  = \Gamma^a_{12}-\Gamma^a_{21}
\end{align*}
for $a=1,2$; i.e. $\Gamma_{ab}^c=\Gamma_{ba}^c$ for $a,b,c=1,2$.

Let $h$ be a left hermitian form on $\Omegaoneg$, write
$h^{ab}=h(\omega^a,\omega^b)$ and assume that there exist
$h_{ab}\in\Ttwotheta$ such that
$h^{ac}h_{cb}=h_{bc}h^{ca}=\delta^a_b\mid$ for $a,b=1,2$. A connection
of the form \eqref{eq:left.conn.def} is compatible with $h$ if
\begin{align*}
  0 = \d_ch(\omega^a,\omega^b)
  -h\paraa{\nabla_{\d_c}\omega^a,\omega^b}-h\paraa{\omega^a,\nabla_{\d_c}\omega^b}
  = \d_ch^{ab}-\Gamma^a_{cp}h^{pb}-\paraa{\Gamma^b_{cp}h^{pa}}^\ast.
\end{align*}
Introducing $\Gamma^{ab}_c = \Gamma^a_{cp}h^{pb}$
the compatibility condition may be written as
\begin{equation}\label{eq:torus.metric.cond}
  \d_ch^{ab} = \Gamma^{ab}_c+\paraa{\Gamma^{ba}_c}^\ast
\end{equation}
and setting $\Gamma^{ab}_c = \frac{1}{2}\d_ch^{ab}+iS^{ab}_c$
condition \eqref{eq:torus.metric.cond} becomes
$(S^{ab}_c)^\ast=S^{ba}_c$. Hence,
\begin{equation}\label{eq:torus.metric.conn}
  \nabla_{\d_a}\omega^b = \Gamma^b_{ac}\omega^c=\Gamma^{bp}_ah_{pc}\omega^c
  = \paraa{\tfrac{1}{2}\d_ah^{bp}+iS^{bp}_a}h_{pc}\omega^c
\end{equation}
defines a connection compatible with $h$ for arbitrary
$(S^{ab}_c)^\ast=S^{ba}_c$; in particular, choosing $S^{ab}_c=0$ shows
that compatible connections always exist. Let us slightly rewrite the
first term in \eqref{eq:torus.metric.conn} to obtain (using $\d_ah^{bp}h_{pc}=\d_a\delta^b_c\mid=0$)
\begin{equation}
  \nabla_{\d_a}\omega^b = 
  \paraa{-\tfrac{1}{2}h^{bp}\d_ah_{pc}+iS^{bp}_a h_{pc}}\omega^c.
\end{equation}
Furthermore, setting
\begin{align*}
  S^{ab}_c = \tfrac{i}{2}h^{ap}(\d_qh_{pc})h^{qb}
  -\tfrac{i}{2}h^{ap}(\d_ph_{cq})h^{qb} + T^{ab}_c
\end{align*}
gives
\begin{align*}
  (S^{ab}_c)^\ast = -\tfrac{i}{2}h^{bq}(\d_qh_{cp})h^{pa}
  +\tfrac{i}{2}h^{bq}(\d_ph_{qc})h^{pa} + (T^{ab}_c)^\ast
  =S^{ba}_c-T^{ba}_c+(T^{ab}_c)^\ast
\end{align*}
implying that $(S^{ab}_c)^\ast=S^{ba}_c$ is equivalent to $(T^{ab}_c)^\ast=T^{ba}_c$, giving
\begin{align*}
  \Gamma^c_{ab} = -\tfrac{1}{2}h^{cp}\d_ah_{pb}
  -\tfrac{1}{2}h^{cp}\d_bh_{pa}
  +\tfrac{1}{2}h^{cp}\d_ph_{ab}+iT^{cp}_ah_{pb}.
\end{align*}
Thus, the above Christoffel symbols define a connection compatible
with $h$ for arbitrary $T^{ab}_c\in\Ttwotheta$ such that
$(T^{ab}_c)^\ast=T^{ba}_c$. Demanding that the connection is torsion
free amounts to requiring that $\Gamma_{ab}^c=\Gamma_{ba}^c$ which is
equivalent to
\begin{equation}\label{eq:torsion.free.T}
  \tfrac{1}{2}\d_q(h_{ab}-h_{ba}) = ih_{qc}T^{cp}_b h_{pa}-ih_{qc}T^{cp}_a h_{pb}
\end{equation}
Let us split the components of (the inverse of) the hermitian form in
its hermitian and antihermitian part:
\begin{align*}
  h_{ab} = f_{ab} + ig_{ab}
\end{align*}
with $f_{ab}^\ast=f_{ab}$ and $g_{ab}^\ast=g_{ab}$. The condition
$h_{ab}^\ast=h_{ba}$ implies that
\begin{align*}
  f_{ab}=f_{ba}\qand
  g_{ab}=-g_{ba},
\end{align*}
and \eqref{eq:torsion.free.T} becomes
\begin{equation}\label{eq:torsion.free.g.T}
  \d_qg_{ab} = h_{qc}T^{cp}_b h_{pa}-h_{qc}T^{cp}_a h_{pb}
\end{equation}
In fact, since $a\in{1,2}$ and $g_{ab}=-g_{ba}$ there are only two
independent equations:
\begin{align}
  &\d_1g_{12} = h_{1c}T^{cp}_2h_{p1}-h_{1c}T^{cp}_1h_{p2}\label{eq:torsionfree.g.1}\\
  &\d_2g_{12} = h_{2c}T^{cp}_2h_{p1}-h_{2c}T^{cp}_1h_{p2}.\label{eq:torsionfree.g.2}
\end{align}
We note that if $\d_1g_{12}=\d_2g_{12}$=0 then one can solve
\eqref{eq:torsion.free.g.T} by setting $T^{ab}_c=0$. For instance, if
the hermitian form is diagonal (implying that the inverse is also
diagonal) then $\d_qg_{ab}=0$ since $g_{ab}=0$ for $a\neq b$. Thus,
for
\begin{align*}
  (h^{ab}) =
  \begin{pmatrix}
    h_1 & 0 \\ 0 & h_2
  \end{pmatrix}\implies
  (h_{ab}) =
  \begin{pmatrix}
    h_1^{-1} & 0 \\ 0 & h_2^{-1}
  \end{pmatrix}
\end{align*}
a torsion free connection on $\Omegaoneg$ compatible with $h$ is given by
\begin{align*}
  &\nabla_{\d_1}\omega = -\tfrac{1}{2}h_1(\d_1h_1^{-1})\omega-\tfrac{1}{2}h_1(\d_2h_1^{-1})\eta\\
  &\nabla_{\d_1}\eta = \tfrac{1}{2}h_2(\d_2h_1^{-1})\omega-\tfrac{1}{2}h_2(\d_1h_2^{-1})\eta\\
  &\nabla_{\d_2}\omega = -\tfrac{1}{2}h_1(\d_2h_1^{-1})\omega+\tfrac{1}{2}h_1(\d_1h_2^{-1})\eta\\
  &\nabla_{\d_2}\eta = -\tfrac{1}{2}h_2(\d_1h_2^{-1})\omega-\tfrac{1}{2}h_2(\d_2h_2^{-1})\eta.
\end{align*}

\noindent
Let us consider another example where we assume that $h$ is purely off-diagonal, i.e.
\begin{align*}
  (h^{ab}) =
  \begin{pmatrix}
    0 & \hh \\
    \hh^\ast & 0
  \end{pmatrix}\implies
               (h_{ab}) = 
               \begin{pmatrix}
                 0 & (\hhi)^\ast\\
                 \hhi & 0
               \end{pmatrix}=                       
                        \begin{pmatrix}
                          0 & f+ig\\
                          f-ig & 0
                        \end{pmatrix}.
\end{align*}
Setting $T^{11}_a=T^{22}_a=0$ for $a=1,2$, \eqref{eq:torsionfree.g.1}
and \eqref{eq:torsionfree.g.2} become (with $g_{12}=g$)
\begin{align*}
  &\d_1g = -h_{12}T^{21}_2h_{12} = -(\hhi)^\ast T^{21}_1(\hhi)^\ast\\
  &\d_2g = h_{21}T^{12}_2h_{21} = \hhi T^{12}_2\hhi
\end{align*}
which can be solved by setting
\begin{align*}
  &T^{21}_1 = -\hh^\ast(\d_1g)\hh^\ast \qquad
    T^{12}_1=(T^{12}_1)^\ast = -\hh(\d_1g)\hh\\
  &T^{12}_2 = \hh(\d_2g)\hh\qquad
    T^{21}_2 = (T^{12}_2)^\ast = \hh^\ast(\d_2g)\hh^\ast,
\end{align*}
giving
\begin{alignat*}{3}
  &\Gamma^1_{11} = -\hh\d_1f
  &\quad &\Gamma^1_{22}= 0 
  &\quad&\Gamma^1_{12}=\Gamma^1_{21}=i\hh\d_2g\\
  &\Gamma^2_{22} = -\hh^\ast\d_2f 
  & &\Gamma^2_{11} = 0 
  & & \Gamma^2_{12}=\Gamma^2_{21}=-i\hh^\ast\d_1g 
\end{alignat*}
and
\begin{alignat*}{2}
  &\nabla_{\d_1}\omega = -\hh(\d_1f)\omega + i\hh(\d_2g)\eta &\qquad
  &\nabla_{\d_1}\eta = -i\hh^\ast (\d_1g)\eta\\
  &\nabla_{\d_2}\omega = i\hh(\d_2g)\omega &
  &\nabla_{\d_2}\eta = -i\hh^\ast(\d_1g)\omega - \hh^\ast(\d_2f)\eta.
\end{alignat*}

\subsection{Bimodule connections}

Let us now construct bimodule connections on $\Omegaoneg$. To this
end, let $\nabla$ be a left connection on $\Omegaoneg$ and write
\begin{align*}
  \nabla_{\d_a}\omega^c = \Gamma^c_{ab}\omega^b.
\end{align*}
If $\nabla$ is also a right connection, then it has to satisfy
\begin{align*}
  \nabla_{\d_a}(\omega^c f) = \paraa{\nabla_{\d_a}\omega^c}f + \omega^c\d_af.
\end{align*}
for all $f\in\Ttwotheta$. Since $[\omega^c,f]=0$ and $\nabla$ is a left connection, this implies that
\begin{align*}
  f\nabla_{\d_a}\omega^c + (\d_af)\omega^c = \paraa{\nabla_{\d_a}\omega^c}f + \omega^c\d_af\equivalent
  [f,\nabla_{\d_a}\omega^c] = 0\equivalent
  [f,\Gamma^c_{ab}] = 0
\end{align*}
for $f\in\Ttwotheta$ and $a,b,c\in\{1,2\}$. Hence,
$\Gamma^c_{ab}\in Z(\Ttwotheta)$ implying that there exist
$\gamma^c_{ab}\in\complex$ such that
$\Gamma^c_{ab} = \gamma^c_{ab}\mid$. Moreover, in order for a left
connection to be a $\ast$-connection, one needs
\begin{align*}
  0 &= \paraa{\nabla_{\d_a}(f\omega^c)}^\ast-\nabla_{\d_a}(\omega^cf^\ast)
      =\paraa{\nabla_{\d_a}(f\omega^c)}^\ast-\nabla_{\d_a}(f^\ast\omega^c)\\
    &=\paraa{f\nabla_{\d_a}\omega^c+(\d_af)\omega^c}^\ast-f^\ast\nabla_{\d_a}\omega^c-(\d_af^\ast)\omega^c\\
    &=(\nabla_{\d_a}\omega^c)^\ast f^\ast + \omega^c\d_af^\ast-f^\ast\nabla_{\d_a}\omega^c-(\d_af^\ast)\omega^c\\
    &= (\nabla_{\d_a}\omega^c)^\ast f^\ast-f^\ast\nabla_{\d_a}\omega^c
    = \paraa{(\Gamma^c_{ab})^\ast f^\ast -f^\ast\Gamma^{c}_{ab}}\omega^b.
\end{align*}
For $f=\mid$ one immediately obtains
$(\Gamma^c_{ab})^\ast=\Gamma^c_{ab}$, and using that in the above
equation gives $[\Gamma^c_{ab},f^\ast]=0$ for all $f\in\Ttwotheta$,
i.e. $\Gamma^c_{ab}\in Z(\Ttwotheta)$. Hence, a left connection
$\nabla$ is a bimodule connection if $\Gamma^c_{ab}\in Z(\Ttwotheta)$
and, moreover, $\nabla$ is a left $\ast$-connection if
$(\Gamma^c_{ab})^\ast=\Gamma_{ab}^c$.

Clearly, if the hermitian form is constant, i.e. $h^{ab}\sim\mid$ for
$a,b\in\{1,2\}$, then $\Gamma^c_{ab}=0$ gives a torsion free
$\ast$-connection compatible with $h$ on the bimodule
$\Omegaoneg$. This is nothing but the canonical connection on a free
module given by $\nabla_{\d_a}f\omega^c=(\d_af)\omega^c$. However,
there are clearly other solutions when $\d_ch^{ab}=0$. For instance if
\begin{align*}
  (h^{ab}) =
  \begin{pmatrix}
    0 & i\lambda\mid \\
    -i\lambda\mid & 0
  \end{pmatrix}                   
\end{align*}
for $\lambda\in\reals$, then
\begin{align*}
  &\nabla_{\d_1}\eta = \gamma_1\omega \qquad \nabla_{\d_2}\eta = 0\\
  &\nabla_{\d_2}\omega = \gamma_2\eta \qquad \nabla_{\d_1}\omega = 0
\end{align*}
with $\gamma_1,\gamma_2\in\reals$ defines a torsion free
$\ast$-connection compatible with $h$.

Now, assume that a $\ast$-bimodule form $g$ is given on
$\Omegaoneg$ by
\begin{align*}
  g(f_a\omega^a,\omega^b\tilde{f}_b) = f_ag^{ab}\tilde{f}_b
\end{align*}
with $(g^{ab})^\ast=g^{ba}$ and, furthermore, assume that $g$ is
diagonal, i.e.
\begin{align*}
  (g^{ab}) =
  \begin{pmatrix}
    g_1 & 0 \\ 0 & g_2
  \end{pmatrix}.
\end{align*}
Compatibility with $g$ amounts to 
\begin{align*}
  \d_cg^{ab} = \Gamma^a_{cp}g^{pb} + g^{ap}\Gamma^b_{cp},
\end{align*}
and since $g$ is diagonal one obtains the equations
\begin{alignat*}{2}
  &\d_1g_1 = 2\Gamma^1_{11}g_1 &\qquad
  &\d_2g_1 = 2\Gamma^1_{21}g_1\\
  &\d_1g_2 = 2\Gamma^2_{12}g_2 &
  &\d_2g_2 = 2\Gamma^2_{22}g_2\\
  &\Gamma^2_{11}g_1 = -\Gamma^1_{12}g_2 &
  &\Gamma^2_{21} g_1 = -\Gamma^1_{22}g_2.
\end{alignat*}
We note that $g_1$ and $g_2$ are necessarily proportional. For
instance, choosing $g_1=U^kV^l$ and $g_2=zg_1=zU^kV^l$ for
$z\in\complex$ (and $z\neq 0$), one obtains a torsion free bimodule
connection compatible with $g$ by setting
\begin{alignat*}{2}
  &\Gamma^1_{11} = \tfrac{ik}{2} &\qquad
  &\Gamma^1_{12}=\Gamma^1_{21} = \tfrac{il}{2}\\
  &\Gamma^2_{22} = \tfrac{il}{2} &
  &\Gamma^2_{12}=\Gamma^2_{21}=\tfrac{ik}{2}\\
  &\Gamma^1_{22} = -\tfrac{ik}{2z}&
  &\Gamma^2_{11} = -\tfrac{ilz}{2}
\end{alignat*}
giving
\begin{alignat*}{2}
  &\nabla_{\d_1}\omega = i\tfrac{k}{2}\omega + i\tfrac{l}{2}\eta&\qquad
  &\nabla_{\d_1}\eta =  -i\tfrac{lz}{2}\omega + i\tfrac{k}{2}\eta  \\
  &\nabla_{\d_2}\omega = i\tfrac{l}{2}\omega - i\tfrac{k}{2z}\eta  &
  &\nabla_{\d_2}\eta = i\tfrac{k}{2}\omega + i\tfrac{l}{2}\eta.
\end{alignat*}
Note that $\nabla$ is not a $\ast$-connection since not all $\Gamma^c_{ab}$ are real.

\section{The geometry of Kronecker algebras}\label{sec:kronecker.alg}

Let us turn to our main object of study in this paper; namely, the
path algebra of the (generalized) Kronecker quiver with $N$ arrows:
\begin{displaymath}
  \begin{tikzcd}
    1
    \arrow[r, draw=none, "\raisebox{+1.5ex}{\vdots}" description]
    \arrow[r, bend left,        "\alpha_1"]
    \arrow[r, bend right, swap, "\alpha_N"]
    &
    2
  \end{tikzcd}
\end{displaymath}
Denoting the leftmost node by $e$, and consequently the rightmost node by $\mid-e$, we let
$\KN$ be the unital $\complex$-algebra generated by
$e,\alpha_1,\ldots,\alpha_N$ satisfying
\begin{align}\label{eq:kronecker.alg.relations}
  e^2=e\qquad
  e\alpha_k = \alpha_k\qquad
  \alpha_ke = 0\qquad
  \alpha_j\alpha_k = 0
\end{align}
for $j,k\in\{1,\ldots,N\}$. The algebra $\KN$ is finite
dimensional, and every element $a\in\KN$ can be uniquely written as
\begin{align*}
  a = \lambda\mid + \mu e + a^i\alpha_i
\end{align*}
for $\lambda,\mu,a^i\in\complex$,  where summation from $1$ to $N$ over the
repeated index $i$ is assumed. The product of two elements
\begin{align*}
  a_1 = \lambda_1\mid + \mu_1e + a_1^i\alpha_i\quad\text{and}\quad
  a_2 = \lambda_2\mid + \mu_2e + a_2^i\alpha_i
\end{align*}
can then be computed as
\begin{align}
  a_1a_2 = \lambda_1\lambda_2\mid +
  \paraa{\lambda_1\mu_2+\lambda_2\mu_1+\mu_1\mu_2}e+
  \paraa{(\lambda_1+\mu_1)a_2^i+\lambda_2a_1^i}\alpha_i,
\end{align}
giving
\begin{align}\label{eq:a1a2.commutator}
  [a_1,a_2] = \paraa{\mu_1a_2^i-\mu_2a_1^i}\alpha_i.
\end{align}
The subspace $\KaN$, generated by $\alpha_1,\ldots,\alpha_N$, is a
two-sided ideal of $\KN$ and we note that $ea=a$ and $ab=0$ for all
$a,b\in\KaN$.

It is straightforward to introduce a $\ast$-structure on $\KN$.

\begin{proposition}\label{prop:star.algebra.structure}
  For $a=\lambda\mid + \mu e + a^i\alpha_i\in\KN$ set
  \begin{align}
    a^\ast = (\bar{\lambda}+\bar{\mu})\mid-\bar{\mu} e+\bar{a}^i\alpha_i.
  \end{align}
  Then $(a^\ast)^\ast = a$, $(za+b)^\ast=\bar{z}a^\ast + b^\ast$ and
  $(ab)^\ast=b^\ast a^\ast$ for all $z\in\complex$ and $a,b\in\KN$.
\end{proposition}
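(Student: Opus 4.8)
The plan is to verify the three assertions by direct computation from the explicit normal form $a = \lambda\mid + \mu e + a^i\alpha_i$ and the product formula for $a_1a_2$ recorded above. Two of the three are essentially immediate. Conjugate-linearity: if $a,b$ have components $(\lambda_a,\mu_a,a^i)$ and $(\lambda_b,\mu_b,b^i)$, then $za+b$ has components $(z\lambda_a+\lambda_b,\,z\mu_a+\mu_b,\,za^i+b^i)$, and since complex conjugation is additive and conjugate-linear in $z$, applying the defining formula componentwise gives $(za+b)^\ast = \bar z a^\ast + b^\ast$. The involution property: writing $a^\ast = \lambda'\mid+\mu' e+(a')^i\alpha_i$ with $\lambda'=\bar\lambda+\bar\mu$, $\mu'=-\bar\mu$ and $(a')^i=\bar a^i$, a second application of the formula produces $\mid$-component $\bar{\lambda'}+\bar{\mu'}=(\lambda+\mu)-\mu=\lambda$, $e$-component $-\bar{\mu'}=\mu$, and $\alpha_i$-component $\overline{(a')^i}=a^i$, i.e.\ $(a^\ast)^\ast=a$.

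The substantive step is anti-multiplicativity, $(a_1a_2)^\ast = a_2^\ast a_1^\ast$. From the product formula, $a_1a_2$ has components $\big(\lambda_1\lambda_2,\ \lambda_1\mu_2+\lambda_2\mu_1+\mu_1\mu_2,\ (\lambda_1+\mu_1)a_2^i+\lambda_2a_1^i\big)$, so $(a_1a_2)^\ast$ is read off from the definition. On the other hand $a_1^\ast$ and $a_2^\ast$ have components $(\bar\lambda_1+\bar\mu_1,-\bar\mu_1,\bar a_1^i)$ and $(\bar\lambda_2+\bar\mu_2,-\bar\mu_2,\bar a_2^i)$; substituting these into the product formula for $a_2^\ast a_1^\ast$ and expanding gives the $\mid$-component $(\bar\lambda_2+\bar\mu_2)(\bar\lambda_1+\bar\mu_1)$, which equals $\overline{\lambda_1\lambda_2}+\overline{\lambda_1\mu_2+\lambda_2\mu_1+\mu_1\mu_2}$; the $e$-component $-\bar\lambda_2\bar\mu_1-\bar\lambda_1\bar\mu_2-\bar\mu_1\bar\mu_2$ (two $\bar\mu_1\bar\mu_2$ contributions partially cancelling), matching $-\overline{\lambda_1\mu_2+\lambda_2\mu_1+\mu_1\mu_2}$; and the $\alpha_i$-component $\bar\lambda_2\bar a_1^i+(\bar\lambda_1+\bar\mu_1)\bar a_2^i$, matching $\overline{(\lambda_1+\mu_1)a_2^i+\lambda_2a_1^i}$. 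This is the one place where care with the bookkeeping is needed, but there is no real obstacle.

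A slightly more conceptual alternative, if preferred, is to note that $f:=\mid-e$ is a second idempotent with $ef=fe=0$ and $e\alpha_k f = \alpha_k$, and that in the basis $\{e,f,\alpha_1,\ldots,\alpha_N\}$ the proposed involution simply interchanges the (conjugated) coefficients of $e$ and $f$ while fixing each $\alpha_k$; equivalently, $\ast$ is the conjugate-linear anti-automorphism determined by $e^\ast=f$ and $\alpha_k^\ast=\alpha_k$. One then only has to check that this assignment respects the relations \eqref{eq:kronecker.alg.relations}: $(e^\ast)^2=(\mid-e)^2=\mid-e=e^\ast$ (consistent with $(e^2)^\ast$); $(e\alpha_k)^\ast=\alpha_k^\ast e^\ast=\alpha_k(\mid-e)=\alpha_k$; $(\alpha_k e)^\ast=e^\ast\alpha_k^\ast=(\mid-e)\alpha_k=0$; and $(\alpha_j\alpha_k)^\ast=\alpha_k^\ast\alpha_j^\ast=\alpha_k\alpha_j=0$. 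Since all relations are preserved, $\ast$ is a well-defined conjugate-linear anti-automorphism, $(a^\ast)^\ast=a$ reduces to $e^{\ast\ast}=e$ and $\alpha_k^{\ast\ast}=\alpha_k$, and the explicit formula is recovered by evaluating on $a=(\lambda+\mu)e+\lambda f+a^i\alpha_i$. Either way, the only obstacle is the component bookkeeping; there is no conceptual difficulty.
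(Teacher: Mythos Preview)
Your proof is correct and your primary argument is essentially identical to the paper's: both verify $(a^\ast)^\ast=a$ and $(a_1a_2)^\ast=a_2^\ast a_1^\ast$ by direct componentwise computation using the explicit product formula, with conjugate-linearity noted as immediate. Your alternative route via the idempotent $f=\mid-e$, defining $\ast$ on generators by $e^\ast=f$, $\alpha_k^\ast=\alpha_k$ and checking the relations \eqref{eq:kronecker.alg.relations}, is a nice conceptual supplement not present in the paper; it explains \emph{why} the formula takes the shape it does and reduces the bookkeeping, at the cost of invoking the universal property of the path algebra.
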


\begin{proof}
  It is easy to see that $(za+b)^\ast = \bar{z}a^\ast +
  b^\ast$. Furthermore, one checks that
  \begin{align*}
    (a^\ast)^\ast = (\lambda+\mu-\mu)\mid+\mu e+a^i\alpha_i = a.
  \end{align*}
  Now, for
  \begin{align*}
    a_1 = \lambda_1\mid + \mu_1e + a_1^i\alpha_i\quad\text{and}\quad
    a_2 = \lambda_2\mid + \mu_2e + a_2^i\alpha_i
  \end{align*}
  one computes
  \begin{align*}
    (a_1a_2)^\ast = &(\lb_1+\mub_1)(\lb_2+\mub_2)\mid
                      -(\lb_1\mub_2+\lb_2\mub_1+\mub_1\mub_2)e+\paraa{(\lb_1+\mub_1)\bar{a}_2^i+\lb_2\bar{a}_1^i}
  \end{align*}
  and
  \begin{align*}
    a_2^\ast a_1^\ast
    &= \parab{(\lb_2+\mub_2)\mid-\mub_2 e+\bar{a}_2^i\alpha_i}
    \parab{(\lb_1+\mub_1)\mid-\mub_1 e+\bar{a}_1^i\alpha_i}\\
    &=(\lb_1+\mub_1)(\lb_2+\mub_2)\mid
      +\paraa{-(\lb_2+\mub_2)\mub_1-(\lb_1+\mub_1)\mub_2+\mub_1\mub_2}e\\
    &\quad+\paraa{\lb_2\bar{a}_1^i+(\lb_1+\mub_1)\bar{a}_2^i}\alpha_i,
  \end{align*}
  showing that $(a_1a_2)^\ast=a_2^\ast a_1^\ast$ for all
  $a_1,a_2\in\KN$.
\end{proof}

\noindent
Proposition~\ref{prop:star.algebra.structure} implies that $\KN$ is a
$\ast$-algebra with
\begin{align*}
  e^\ast = \mid-e\qand \alpha_i^\ast = \alpha_i.
\end{align*}
Moreover, note that an arbitrary hermitian element $a\in\KN$ can be
written as
\begin{align*}
  a = (\lambda-\tfrac{i}{2}\mu)\mid+i\mu e+a^i\alpha_i
\end{align*}
with $\lambda,\mu,a^i\in\reals$.

In the construction of differential calculi, the center plays an
important role. For $\KN$, the center turns out to be trivial.

\begin{proposition}
  $Z(\KN) = \{\lambda\mid:\lambda\in\complex\}$.
\end{proposition}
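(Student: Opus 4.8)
The plan is to prove the two inclusions separately. The inclusion $\{\lambda\mid:\lambda\in\complex\}\subseteq Z(\KN)$ is immediate, since $\mid$ commutes with everything and the center is closed under scalar multiples. The substance is the reverse inclusion, and for this the only tool needed is the commutator formula \eqref{eq:a1a2.commutator}.

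First I would take an arbitrary element $a=\lambda\mid+\mu e+a^i\alpha_i\in Z(\KN)$ and exploit that $[a,b]=0$ for \emph{every} $b\in\KN$ by plugging in well-chosen test elements. Applying \eqref{eq:a1a2.commutator} with $b=e$ (so that $b$ has vanishing $\mid$- and $\alpha$-components and $e$-component equal to $1$) gives
\begin{align*}
  0 = [a,e] = -a^i\alpha_i,
\end{align*}
and since $\alpha_1,\ldots,\alpha_N$ together with $\mid$ and $e$ form a basis of $\KN$, this forces $a^i=0$ for all $i$. Next, using \eqref{eq:a1a2.commutator} with $b=\alpha_j$ (vanishing $\mid$- and $e$-components, $\alpha$-component $\delta^i_j$) and the fact that $a$ now has no $\alpha$-part, we get
\begin{align*}
  0 = [a,\alpha_j] = \mu\,\alpha_j
\end{align*}
for each $j$, hence $\mu=0$. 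Therefore $a=\lambda\mid$, completing the argument.

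There is essentially no obstacle here: the whole statement reduces to reading off \eqref{eq:a1a2.commutator}, and the only mild care needed is to record that $\{\mid,e,\alpha_1,\ldots,\alpha_N\}$ is a basis (stated just above the proposition), so that vanishing of a linear combination of the $\alpha_i$ implies vanishing of the coefficients. One could equally phrase the computation by testing against the single element $b=e+\alpha_1+\cdots+\alpha_N$, but splitting into the two test elements $e$ and $\alpha_j$ keeps the bookkeeping transparent.
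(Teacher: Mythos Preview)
Your proof is correct and follows essentially the same approach as the paper: both test an arbitrary central element first against $e$ to kill the $\alpha_i$-coefficients and then against $\alpha_j$ to kill the $e$-coefficient. The only cosmetic difference is that you invoke the precomputed commutator formula \eqref{eq:a1a2.commutator} whereas the paper recomputes these commutators directly.
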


\begin{proof}
  Let $c=\lambda\mid + \mu e + a^i\alpha_i$ be in the center of
  $\KN$. Writing out $[c,e] = 0$ gives
  \begin{align*}
    0=[c,e] = ce - ec = (\lambda+\mu e)\mid -(\lambda+\mu e)\mid-a^i\alpha_i
    =-a^i\alpha_i
  \end{align*}
  implying that $a^i=0$ for $i=1,\ldots,N$ and $c=\lambda\mid+\mu
  e$. Requiring that $[c,\alpha_i]=0$ for $i=1,\ldots,N$ gives
  \begin{align*}
    0 = [c,\alpha_i] = c\alpha_i-\alpha_ic
    = (\lambda+\mu)\alpha_i-\lambda\alpha_i = \mu\alpha_i
  \end{align*}
  implying that $\mu=0$ and $c=\lambda\mid$. Clearly, any element of
  the form $\lambda\mid$ is in the center of $\KN$.
\end{proof}

\noindent
Furthermore, the algebra $\KN$ has plenty of invertible elements, as shown by the
following result.

\begin{proposition}\label{prop:invertible}
  An element $a=\lambda\mid+\mu e+a^i\alpha_i\in\KN$ is invertible if
  and only if $\lambda\neq 0$ and $\lambda+\mu\neq 0$. In this case,
  the inverse is given by
  \begin{equation}\label{eq:a.inverse}
    a^{-1} = \frac{1}{\lambda(\lambda+\mu)}\parab{(\lambda+\mu)\mid -\mu e - a^i\alpha_i}.
  \end{equation}
\end{proposition}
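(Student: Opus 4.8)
The plan is to prove both directions by direct computation, using the explicit product formula for two elements of $\KN$ derived earlier in the section. First I would establish necessity. Suppose $a = \lambda\mid + \mu e + a^i\alpha_i$ is invertible, with inverse $b = \nu\mid + \rho e + b^i\alpha_i$. Reading off the coefficient of $\mid$ in $ab = \mid$ using the product formula gives $\lambda\nu = 1$, so $\lambda \neq 0$. Reading off the coefficient of $e$ gives $\lambda\rho + \nu\mu + \mu\rho = 0$, i.e. $(\lambda+\mu)\rho = -\nu\mu$; combined with $\lambda\nu=1$ one gets $(\lambda+\mu)\rho = -\mu/\lambda$. If $\lambda+\mu=0$ this forces $\mu = 0$, hence $\lambda = 0$, contradicting $\lambda\neq 0$; so $\lambda+\mu\neq 0$ as well. (Alternatively, and perhaps more cleanly, one can argue via the algebra homomorphism $\KN \to \complex^2$ sending $a \mapsto (\lambda,\ \lambda+\mu)$, which one checks is multiplicative directly from the product formula; invertibility of $a$ forces invertibility of its image, giving $\lambda\neq 0$ and $\lambda+\mu\neq 0$ immediately.)

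For sufficiency, assume $\lambda\neq 0$ and $\lambda+\mu\neq 0$, and simply verify that the element
\begin{align*}
  a^{-1} = \frac{1}{\lambda(\lambda+\mu)}\parab{(\lambda+\mu)\mid - \mu e - a^i\alpha_i}
\end{align*}
is a genuine two-sided inverse. This is a routine application of the product formula: one multiplies $a$ by the candidate on both sides and checks that the $\mid$-coefficient comes out to $1$ and the $e$- and $\alpha_i$-coefficients vanish. For the $e$-coefficient, with the candidate written as $\lambda'\mid + \mu' e + b^i\alpha_i$ where $\lambda' = 1/\lambda$ and $\mu' = -\mu/(\lambda(\lambda+\mu))$ (so that $\lambda'+\mu' = 1/(\lambda+\mu)$), the relevant cross-terms collapse precisely because $\lambda\mu' + \mu\lambda' + \mu\mu' = \mu(\lambda'+\mu') + \mu\lambda' - \mu\lambda'$... — more transparently, it collapses because $(\lambda+\mu)(\lambda'+\mu') = 1$ and $\lambda\lambda' = 1$ while the $e$-coefficient of a product is $(\lambda_1+\mu_1)(\lambda_2+\mu_2) - \lambda_1\lambda_2$ — and for the $\alpha_i$-coefficient one uses the relation $(\lambda+\mu)b^i + \lambda' a^i = 0$, which holds by the choice $b^i = -a^i/(\lambda(\lambda+\mu))$. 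The computation in the other order ($a^{-1}a$) is symmetric.

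I do not expect any genuine obstacle here; the statement is essentially bookkeeping once the product formula is in hand. The only point requiring a little care is that $\KN$ is noncommutative, so one must verify the inverse on both sides rather than just one — but both verifications are short and parallel. If one prefers the conceptual route, the single observation that $a \mapsto (\lambda, \lambda+\mu)$ is a unital algebra homomorphism onto $\complex^2$ with kernel exactly the $\alpha$-part, together with the fact that the kernel is a nilpotent ideal (indeed $\KaN \cdot \KaN = 0$), makes the whole statement transparent: $a$ is invertible iff its image in $\complex^2/(\text{nilpotents lift to units})$ is invertible, which is the condition $\lambda\neq 0$ and $\lambda + \mu \neq 0$, and the explicit formula is then just the geometric-series inverse $(\text{semisimple part})^{-1}(\mid - (\text{nilpotent part})\cdot(\text{semisimple part})^{-1})$ truncated after one step.
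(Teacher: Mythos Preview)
Your proposal is correct and follows essentially the same approach as the paper: both directions are handled by direct computation with the product formula, and the key contradiction ($\lambda+\mu=0 \Rightarrow \mu=0 \Rightarrow \lambda=0$) is exactly the one the paper uses. Your additional conceptual remark about the unital homomorphism $\KN\to\complex^2$ with nilpotent kernel is a nice gloss not present in the paper, but the main argument is the same.
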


\begin{proof}
  If $\lambda\neq 0$ and $\lambda+\mu\neq 0$ it is easy to check that
  \eqref{eq:a.inverse} is the inverse of $a$. Now, assume that
  $a=\lambda\mid+\mu e+a^i\alpha_i$ is invertible. Then there exists
  \begin{equation*}
    b = \gamma_1\mid + \gamma_2e + \gamma^i\alpha_i
  \end{equation*}
  such that $ab=\mid$, which is equivalent to
  \begin{align}
    &\lambda\gamma_1 = 1\label{eq:lambda.gamma1}\\
    &(\lambda+\mu)\gamma_2 = -\gamma_1\mu\label{eq:lambda.gamma2}\\
    &(\lambda+\mu)\gamma^i = -\gamma_1a^i\label{eq:lambda.gammai}
  \end{align}
  for $i=1,\ldots,N$. It follows immediately from
  \eqref{eq:lambda.gamma1} that $\lambda\neq 0$ and $\gamma_1\neq
  0$. Furthermore, if $\lambda+\mu=0$ then \eqref{eq:lambda.gamma2}
  implies that $\mu=0$ (since $\gamma_1\neq 0$) and, consequently,
  that $\lambda=-\mu=0$, which contradicts \eqref{eq:lambda.gamma1}.
  Hence, if $ab=\mid$ then $\lambda\neq 0$ and $\lambda+\mu\neq 0$.
\end{proof} 

\noindent
In the following, we will construct derivation based differential
calculi over $\KN$. As a first step, let us describe the Lie algebra
of derivations as well as a basis consisting of hermitian derivations.

\begin{proposition}\label{prop:derivations}
  A basis of $\Der(\KN)$ is given by $\{\d_k\}_{k=1}^N$ and $\{\d_k^l\}_{k,l=1}^N$ with
  \begin{alignat}{2}
    &\d_k(e) = i\alpha_k &\qquad &\d_k(\alpha_l) = 0\label{eq:di.def}\\
    &\d_k^l(e)=0 &\qquad &\d_{k}^l(\alpha_j) =\delta_j^l\alpha_k,\label{eq:dij.def}
  \end{alignat}
  satisfying
  \begin{align*}
    &[\d_{i}^j,\d_{k}^l] = \delta_{k}^j\d_{i}^l-\delta_{i}^l\d_{k}^j\\
    &[\d_{i}^j,\d_k] = \delta_{k}^j\d_i\\
    &[\d_i,\d_j] = 0.
  \end{align*}
  Moreover, $\d_i,\d_i^j$ are hermitian derivations for $i,j=1,\ldots,N$.
\end{proposition}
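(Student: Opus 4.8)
The plan is to first establish that the listed derivations are indeed derivations of $\KN$, then show they span $\Der(\KN)$, then verify linear independence, and finally check the commutation relations and the hermiticity claims. I would organize the work around the decomposition $a = \lambda\mid + \mu e + a^i\alpha_i$, since a derivation $\d$ is determined by its values $\d(e)$ and $\d(\alpha_i)$ (as $\d(\mid) = 0$ automatically), and the defining relations \eqref{eq:kronecker.alg.relations} impose strong constraints on these values.

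First, to see that $\d_k$ and $\d_k^l$ as defined in \eqref{eq:di.def}--\eqref{eq:dij.def} extend to derivations, I would check that they respect each relation in \eqref{eq:kronecker.alg.relations}. For instance, applying $\d_k$ to $e^2 = e$ gives $\d_k(e)e + e\d_k(e) = i\alpha_k e + i e\alpha_k = 0 + i\alpha_k = \d_k(e)$ using $\alpha_k e = 0$ and $e\alpha_k = \alpha_k$; applying $\d_k$ to $e\alpha_l = \alpha_l$ gives $i\alpha_k\alpha_l + 0 = 0 = \d_k(\alpha_l)$ since $\alpha_k\alpha_l = 0$; the relations $\alpha_l e = 0$ and $\alpha_j\alpha_l = 0$ are immediate as $\d_k$ kills all $\alpha$'s. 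The verification for $\d_k^l$ is analogous, with the only mildly interesting check being $e\alpha_j = \alpha_j$, which gives $0 + e\delta_j^l\alpha_k = \delta_j^l\alpha_k = \d_k^l(\alpha_j)$ using $e\alpha_k = \alpha_k$. This part is routine but should be spelled out for at least one relation of each type.

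Next, for the spanning claim, let $\d$ be an arbitrary derivation. Since $\KN$ is generated by $e$ and the $\alpha_k$, $\d$ is determined by $\d(e)$ and the $\d(\alpha_k)$. Differentiating $e^2 = e$ yields $\d(e)e + e\d(e) = \d(e)$, which (writing $\d(e) = \lambda\mid + \mu e + c^i\alpha_i$ and using $\mid e = e$, $e^2 = e$, $e\alpha_i = \alpha_i$, $\alpha_i e = 0$) forces $\lambda = 0$, so $\d(e) = \mu e + c^i\alpha_i$ for some scalars. Differentiating $\alpha_k e = 0$ gives $\d(\alpha_k)e + \alpha_k\d(e) = 0$; since $\alpha_k\d(e) = \alpha_k(\mu e + c^i\alpha_i) = 0$, this says $\d(\alpha_k)e = 0$, which (by the computation of right multiplication by $e$) kills the $\mid$- and $e$-components of $\d(\alpha_k)$, so $\d(\alpha_k) = d_k^i\alpha_i$ for scalars $d_k^i$. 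Similarly differentiating $e\alpha_k = \alpha_k$ and $\alpha_j\alpha_k = 0$ should be checked for consistency but imposes nothing new beyond (one must verify) forcing $\mu = 0$ in $\d(e)$: from $e\alpha_k = \alpha_k$ we get $\d(e)\alpha_k + e\d(\alpha_k) = \d(\alpha_k)$, i.e. $(\mu e + c^i\alpha_i)\alpha_k + e(d_k^i\alpha_i) = d_k^i\alpha_i$, and since $e\alpha_i = \alpha_i$ and $\alpha_i\alpha_k = 0$ this reads $0 + d_k^i\alpha_i = d_k^i\alpha_i$, giving no constraint — so in fact $\mu$ remains free, and $\d = \mu\,(\text{the derivation } e\mapsto e,\ \alpha_k\mapsto 0)$... here I should be careful: the map $e \mapsto e$ does \emph{not} annihilate $e^2 - e$ unless checked, and indeed differentiating gives $e\cdot e + e\cdot e = 2e \ne e$, so $\mu = 0$ is forced after all. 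Hence $\d(e) = c^i\alpha_i = \sum_i c^i \d_i(e)$ and $\d(\alpha_k) = d_k^i\alpha_i = \sum_{i} d_k^i \d_i^k(\alpha_k)$; matching values, $\d = -i\,c^i\d_i + d_k^l\d_l^k$ (with appropriate index placement and the factor of $i$ from the normalization $\d_k(e) = i\alpha_k$), proving the spanning. \textbf{This elimination step — extracting exactly which scalar data a derivation carries, and in particular pinning down that the $e$-component of $\d(e)$ must vanish — is the main obstacle}, since one must handle each of the four relations and be careful that the factor $i$ in \eqref{eq:di.def} is only a normalization ensuring hermiticity, not a constraint.

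For linear independence of $\{\d_k\} \cup \{\d_k^l\}$: suppose $\sum_k a^k\d_k + \sum_{k,l} b^k_l \d_k^l = 0$. Evaluating on $e$ gives $i\sum_k a^k\alpha_k = 0$, hence all $a^k = 0$ since the $\alpha_k$ are linearly independent in $\KN$; evaluating the remaining $\sum_{k,l}b^k_l\d_k^l$ on $\alpha_j$ gives $\sum_k b^k_j\alpha_k = 0$ for each $j$, hence all $b^k_j = 0$. So the proposed set is a basis, of size $N + N^2$. The commutation relations are then verified by evaluating both sides on the generators $e$ and $\alpha_j$: e.g. $[\d_i^j,\d_k^l](\alpha_m) = \d_i^j(\delta_m^l\alpha_k) - \d_k^l(\delta_m^j\alpha_i) = \delta_m^l\delta_k^j\alpha_i - \delta_m^j\delta_i^l\alpha_k$, which matches $(\delta_k^j\d_i^l - \delta_i^l\d_k^j)(\alpha_m) = \delta_k^j\delta_m^l\alpha_i - \delta_i^l\delta_m^j\alpha_k$, and both sides annihilate $e$; similarly for the mixed bracket $[\d_i^j,\d_k]$ and for $[\d_i,\d_j] = 0$. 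Finally, hermiticity: using the $\ast$-structure $e^\ast = \mid - e$, $\alpha_i^\ast = \alpha_i$ from Proposition~\ref{prop:star.algebra.structure}, a derivation $\d$ is hermitian iff $\d(a^\ast) = \d(a)^\ast$ on generators. For $\d_k$: $\d_k(e^\ast) = \d_k(\mid - e) = -i\alpha_k$ and $\d_k(e)^\ast = (i\alpha_k)^\ast = -i\alpha_k^\ast$... wait, $(i\alpha_k)^\ast = \bar i\,\alpha_k^\ast = -i\alpha_k$, so these agree; and $\d_k(\alpha_l^\ast) = \d_k(\alpha_l) = 0 = 0^\ast = \d_k(\alpha_l)^\ast$. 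For $\d_k^l$: $\d_k^l(e^\ast) = \d_k^l(\mid - e) = 0 = 0^\ast = \d_k^l(e)^\ast$ and $\d_k^l(\alpha_j^\ast) = \d_k^l(\alpha_j) = \delta_j^l\alpha_k = (\delta_j^l\alpha_k)^\ast = \d_k^l(\alpha_j)^\ast$ since $\delta_j^l$ is real and $\alpha_k^\ast = \alpha_k$. This completes the proof. The factor of $i$ in the definition of $\d_k$ is precisely what makes $\d_k$ hermitian rather than anti-hermitian, which I would remark on.
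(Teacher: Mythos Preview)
Your approach is essentially the paper's: determine an arbitrary derivation by its values on $e$ and the $\alpha_i$, constrain those values via the relations \eqref{eq:kronecker.alg.relations}, and then read off the basis, commutators, and hermiticity. However, there is a genuine gap in your elimination step.

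You claim that $\d(\alpha_k)e = 0$ ``kills the $\mid$- and $e$-components of $\d(\alpha_k)$''. This is not correct: for $b = \lambda_k\mid + \mu_k e + d_k^i\alpha_i$ one has $be = (\lambda_k + \mu_k)e$, so $be = 0$ only forces $\mu_k = -\lambda_k$, leaving $\d(\alpha_k) = \lambda_k(\mid - e) + d_k^i\alpha_i$ with $\lambda_k$ still undetermined. The missing constraint comes precisely from the relation you dismissed as vacuous: differentiating $e\alpha_k = \alpha_k$ with this correct form of $\d(\alpha_k)$ (and $\d(e) = c^i\alpha_i$, once $\mu=0$ is in hand) gives
\[
  0 + e\bigl(\lambda_k(\mid - e) + d_k^i\alpha_i\bigr) = \lambda_k(\mid - e) + d_k^i\alpha_i,
\]
i.e.\ $d_k^i\alpha_i = \lambda_k(\mid - e) + d_k^i\alpha_i$, hence $\lambda_k = 0$. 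This is exactly how the paper closes the argument. Relatedly, your first pass at $e^2=e$ should already yield $\lambda=\mu=0$ in one stroke (the equation reads $-\lambda\mid + (2\lambda+\mu)e = 0$), which would spare you the detour and self-correction; and your check of $e\alpha_k=\alpha_k$ dropped the term $\mu e\alpha_k=\mu\alpha_k$ while $\mu$ was still in play. The remaining parts of your plan---linear independence, the commutator identities, and the hermiticity checks---are fine and match the paper.
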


\begin{proof}
  Let us start by finding the form of an arbitrary derivation. To this
  end, we make the Ansatz:
  \begin{align*}
    &\d(e) = \lambda\mid + \mu e + a^k\alpha_k\\\
    &\d(\alpha_i) = \lambda_i\mid + \mu_ie+ b_i^k\alpha_k
  \end{align*}
  for $\lambda,\lambda_i,\mu,\mu_i,a^k,b_i^k\in\complex$. These maps
  (extended as derivations to all of $\KN$) are derivations if they
  preserve the relations in \eqref{eq:kronecker.alg.relations}. One computes
  \begin{align*}
    0 &= \d(e^2-e) = e\d e+(\d e)e -\d e\\
      &= e(\lambda\mid + \mu e + a^k\alpha_k)+(\lambda\mid + \mu e + a^k\alpha_k)e-(\lambda\mid + \mu e + a^k\alpha_k)\\
      &= -\lambda\mid  + (2\lambda+\mu)e,
  \end{align*}
  implying that $\lambda=\mu=0$ and $\d e = a^k\alpha_k$. Next,
  \begin{align*}
    0 &= \d(\alpha_ie) = \alpha_i\d e + (\d\alpha_i)e
        = \alpha_i(a^k\alpha_k) + (\lambda_i\mid + \mu_ie + b_i^k\alpha_k)e\\
      &= (\lambda_i+\mu_i)e,
  \end{align*}
  implying that $\mu_i=-\lambda_i$ and $\d\alpha_i = \lambda_i(\mid-e)+b_i^k\alpha_k$. Furthermore,
  \begin{align*}
    0 &= \d(e\alpha_i-\alpha_i) = e\d\alpha_i + (\d e)\alpha_i - \d\alpha_i\\
      &= e\paraa{\lambda_i(\mid-e)+b_i^k\alpha_k} + a^k\alpha_k\alpha_i-\lambda_i(\mid-e)-b_i^k\alpha_k\\
      &= -\lambda_i(\mid-e),
  \end{align*}
  implying that $\lambda_i=0$ and $\d\alpha_i = b_i^k\alpha_k$. Finally,
  \begin{align*}
    0=\d(\alpha_i\alpha_j) = \alpha_i\d\alpha_j + (\d\alpha_i)\alpha_j
    = b_j^k\alpha_i\alpha_k + b_i^k\alpha_k\alpha_j = 0.
  \end{align*}
  Hence, $\d$ is a derivation if and only if there exists $a^k,b_i^k\in\complex$ such that
  \begin{align*}
    \d e = a^k\alpha_k\qand
    \d\alpha_i = b_i^k\alpha_k.
  \end{align*}
  It is now clear that the derivations given in \eqref{eq:di.def} and
  \eqref{eq:dij.def} is a vector space basis of $\Der(\KN)$. Next, let
  us compute the commutators of these derivations:
  \begin{align*}
    &[\d_k,\d_l](e) = \d_k(i\alpha_l) - \d_l(i\alpha_k) = 0\\
    &[\d_k,\d_l](\alpha_m) = \d_k(0) - \d_l(0)=0
  \end{align*}
  gives $[\d_k,\d_l]=0$, and
  \begin{align*}
    [\d_i^j,\d_{k}^l](e) &= \d_i^j(0)-\d_k^l(0) = 0\\
    [\d_i^j,\d_k^l](\alpha_m) &= \delta_m^l\d_i^j(\alpha_k)-\delta_m^j\d_k^l(\alpha_i)
      =\delta_m^l\delta_k^j\alpha_i-\delta_m^j\delta_i^l\alpha_k\\
                         &= \delta_k^j\d_i^l(\alpha_m)-\delta_i^l\d_k^j(\alpha_m),
  \end{align*}
  from which we conclude that $[\d_i^j,\d_k^l]=\delta_k^j\d_i^l-\delta_i^l\d_k^j$. Furthermore,
  \begin{align*}
    &[\d_j^k,\d_l](e) = \d_j^k(i\alpha_l) = i\delta_l^k\alpha_j = \delta^k_l\d_j(e)\\
    &[\d_j^k,\d_l](\alpha_m) = -\delta_m^k\d_l\alpha_j = 0,
  \end{align*}
  giving $[\d_j^k,\d_l] = \delta_k^l\d_j$. Finally, let us show that
  $\d_j$ and $\d_j^k$ are hermitian derivations; namely
  \begin{align*}
    &\d_j(e^\ast)^\ast = \d_j(\mid-e)^\ast=-(i\alpha_j)^\ast = i\alpha_j = \d_j(e)\\
    &\d_j^k(\alpha_m^\ast)^\ast = \d_j^k(\alpha_m)^\ast = \delta^k_m\alpha_j = \d_j^k(\alpha_m)
  \end{align*}
  together with $\d_j(\alpha_k^\ast)^\ast=\d_j^k(e^\ast)^\ast = 0$
  shows that $\d_j$ and $\d_j^k$ are indeed hermitian derivations for
  $j,k=1,\ldots,N$.
\end{proof}

\noindent
From the above result, we note in particular that $\d(a)\in\KaN$ for
all $\d\in\Der(\KN)$ and $a\in\KN$. Proposition~\ref{prop:derivations}
constructs a basis for all derivations of $\KN$, and the next results
determines which derivations are inner in terms of the basis.

\begin{proposition}
  A derivation $\d\in\Der(\KN)$ is inner if and only if there exist
  $a^0,a^1,\ldots,a^N\in\complex$ such that
  \begin{align}\label{eq:inner.derivation}
    \d = a^i\d_i + a^0\paraa{\d_1^1+ \d_2^2+ \cdots + \d_N^N}.
  \end{align}
\end{proposition}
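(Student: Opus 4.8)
The plan is to unwind the definition of inner derivation and compare with the explicit basis of Proposition~\ref{prop:derivations} on generators. Recall that $\d\in\Der(\KN)$ is inner precisely when $\d=\d_b$ for some $b\in\KN$, where $\d_b(a)=[b,a]=ba-ab$. Two ingredients make this immediate: the commutator formula \eqref{eq:a1a2.commutator}, and the fact (read off from the Ansatz in the proof of Proposition~\ref{prop:derivations}) that a derivation of $\KN$ is uniquely determined by its values on the generators $e,\alpha_1,\ldots,\alpha_N$, all of which lie in $\KaN$.

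First I would write $b=\lambda\mid+\mu e+b^i\alpha_i$ and apply \eqref{eq:a1a2.commutator} with $a_1=b$. Taking $a_2=e$ (so $\lambda_2=0$, $\mu_2=1$, $a_2^i=0$) and $a_2=\alpha_j$ (so $\lambda_2=\mu_2=0$, $a_2^i=\delta^i_j$) yields
\[
  \d_b(e) = -b^i\alpha_i, \qquad \d_b(\alpha_j) = \mu\,\alpha_j,
\]
while $\d_b(\mid)=0$ automatically. In particular the parameter $\lambda$ is irrelevant, as expected.

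Next I would compute the action of a general combination of the candidate derivations. Using \eqref{eq:di.def} and \eqref{eq:dij.def}, together with $\d_k^k(e)=0$ and $\sum_{k}\d_k^k(\alpha_j)=\sum_k\delta^k_j\alpha_k=\alpha_j$, one gets for $\d=a^i\d_i+a^0\paraa{\d_1^1+\cdots+\d_N^N}$
\[
  \d(e) = i\,a^i\alpha_i, \qquad \d(\alpha_j) = a^0\,\alpha_j.
\]
Since a derivation of $\KN$ is pinned down by its values on $e,\alpha_1,\ldots,\alpha_N$, comparing the two displays shows that $\d_b=a^i\d_i+a^0\paraa{\d_1^1+\cdots+\d_N^N}$ if and only if $a^0=\mu$ and $ia^i=-b^i$. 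Hence every inner derivation has the form \eqref{eq:inner.derivation}. Conversely, given $a^0,a^1,\ldots,a^N\in\complex$, the element $b=a^0e-ia^i\alpha_i\in\KN$ satisfies $\d_b=a^i\d_i+a^0\paraa{\d_1^1+\cdots+\d_N^N}$, so every such combination is inner.

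The argument is a direct computation with no real obstacle; the only points demanding a little care are the factor of $i$ coming from the normalization $\d_k(e)=i\alpha_k$ in \eqref{eq:di.def}, and the bookkeeping observation that equality of two derivations of $\KN$ may be checked on the generators alone.
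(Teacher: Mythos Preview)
Your proof is correct and follows essentially the same approach as the paper's: compute the action of an inner derivation on the generators $e,\alpha_1,\ldots,\alpha_N$ via the commutator formula and match against the basis derivations of Proposition~\ref{prop:derivations}. You are in fact slightly more thorough than the paper, which only writes out the forward direction; you also supply the converse by exhibiting $b=a^0e-ia^i\alpha_i$, and you keep explicit track of the factor of $i$ arising from the normalization $\d_k(e)=i\alpha_k$.
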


\begin{proof}
  An inner derivation is by definition of the form
  $\d(\cdot)=[\cdot,a]$ for some $a\in\KN$. Setting
  $a=\lambda\mid+\mu e+a^i\alpha_i$ one obtains
  \begin{align*}
    &\d e = [e,a] = a^i\alpha_i\\
    &\d\alpha_k = -\mu\alpha_k
  \end{align*}
  implying that
  \begin{align*}
    \d = a^i\d_i - \mu\paraa{\d^1_1+\d^2_2+\cdots+\d_N^N},
  \end{align*}
  which proves that every inner derivation is of the form as in
  \eqref{eq:inner.derivation}.
\end{proof}

\noindent
Let us denote by $\ginn$ the Lie algebra of inner derivations; that
is, $\ginn$ is a Lie algebra of dimension $N+1$ with a basis given by
$\{\dh,\d_1,\ldots,\d_N\}$, with $\dh=\d_1^1+\cdots+\d_N^N$, satisfying
\begin{align*}
  [\d_i,\d_j] = 0\qand
  [\dh,\d_i] = \d_i
\end{align*}
for $i=1,\ldots,N$ and
\begin{align*}
  \dh(a) = [e,a]\qquad \d_k(a) = -i[\alpha_k,a]
\end{align*}
for $a\in\KN$.

\subsection{Differential calculi}

Given a Lie algebra $\g\subseteq\Der(\KN)$ (which we shall always
assume to be closed under $\d\to\d^\ast$), we construct the restricted
calculus $\Omegag{}$. Recall that $\Omegaoneg$ is generated by $da$
for $a\in\KN$ and, since $\KN$ is finite dimensional, $\Omegaoneg$ is
generated by $d\alpha_0\equiv ide, d\alpha_1,\ldots,
d\alpha_N$. However, depending on the Lie algebra $\g$, these 1-forms
are in general not linearly independent. Nevertheless, one can express
the bimodule structure of $\Omegaoneg$ independent of the choice of
$\g$.

\begin{proposition}\label{prop:Omega1.bimodule.structure}
  For any $\g\subseteq\Der(\KN)$ the bimodule structure of $\Omegag{1}$ is given by
  \begin{align*}
    &ed\alpha_I = d\alpha_I\qquad
    \alpha_id\alpha_I = 0\\
    &(d\alpha_I)e = 0\qquad (d\alpha_I)\alpha_i = 0,
  \end{align*}
  for $i=1,\ldots,N$ and $I=0,1,\ldots,N$; furthermore, $(d\alpha_I)^\ast=d\alpha_I$ for $I=0,1\ldots,N$.
\end{proposition}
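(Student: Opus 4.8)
The plan is to use the fact that $\Omega^1_{\g}$ is, by definition of the restricted calculus, generated as a left $\KN$-module by the $1$-forms $da$ with $a\in\KN$, and that every element of $\Omega^1_{\g}$ is in particular a $Z(\KN)$-multilinear alternating (here just $Z(\KN)$-linear) map $\g\to\KN$. Since $\KN$ is finite dimensional with basis $\mid, e, \alpha_1,\dots,\alpha_N$ and $d$ is linear, $\Omega^1_{\g}$ is spanned over $\KN$ by $d\mid = 0$, $de$ and $d\alpha_1,\dots,d\alpha_N$; replacing $de$ by $d\alpha_0 := i\,de$ (purely cosmetic, so that later formulas are uniform) we get the generating set $d\alpha_0,d\alpha_1,\dots,d\alpha_N$. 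Thus it suffices to verify the four bimodule relations and the $\ast$-relation on each generator $d\alpha_I$, $I=0,1,\dots,N$, since the left $\KN$-module structure then propagates them: e.g. $(a\,d\alpha_I)e = a\,\big((d\alpha_I)e\big)$ because $\Omega^1_{\g}$ is a bimodule.

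The key input is Proposition~\ref{prop:derivations}, which tells us that $\d(\KN)\subseteq\KaN$ for every $\d\in\Der(\KN)$, and in particular $d\alpha_I(\d)=\d(\alpha_I)\in\KaN$ for all $\d\in\g$ (with $\alpha_0=ie$, $\d(\alpha_0)=i\,\d(e)\in\KaN$ as well). Now I would compute each relation by evaluating the relevant $1$-form on an arbitrary $\d\in\g$ and using the algebra relations \eqref{eq:kronecker.alg.relations}, specifically that $ea=a$ and $ba=0$ for $a,b\in\KaN$:
\begin{align*}
  (ed\alpha_I)(\d) &= e\,\d(\alpha_I) = \d(\alpha_I) = (d\alpha_I)(\d),\\
  (\alpha_i\,d\alpha_I)(\d) &= \alpha_i\,\d(\alpha_I) = 0,\\
  \big((d\alpha_I)e\big)(\d) &= \d(\alpha_I)\,e = 0,\\
  \big((d\alpha_I)\alpha_i\big)(\d) &= \d(\alpha_I)\,\alpha_i = 0,
\end{align*}
where the first line uses $e\cdot\KaN = \KaN$ (i.e.\ $ea=a$ for $a\in\KaN$), the second and fourth use $\KaN\cdot\KaN=0$, and the third uses $\KaN\cdot e = 0$ (from $\alpha_k e = 0$). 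Since these hold for all $\d\in\g$, the $1$-forms themselves are equal. For the $\ast$-statement, recall the involution on $\Omega^1_{\g}$ is $\omega^\ast(\d)=\omega(\d^\ast)^\ast$, and that $\d^\ast\in\g$ by the standing assumption on $\g$. For $I=1,\dots,N$ we have $\alpha_I^\ast=\alpha_I$ and $\d^\ast(\alpha_I)^\ast = \d^\ast(\alpha_I^\ast)^\ast$, which by hermiticity of the basis derivations (Proposition~\ref{prop:derivations}) — more precisely, by the fact that for any $\d$, $\d^\ast(a^\ast)^\ast = \d(a)$ is exactly what defines $\d^\ast$ — gives $(d\alpha_I)^\ast(\d) = \d^\ast(\alpha_I)^\ast = \d(\alpha_I) = (d\alpha_I)(\d)$; the case $I=0$ follows since $(ide)^\ast(\d) = \overline{i}\,\d^\ast(e)^\ast = -i\,\d^\ast(e^\ast)^\ast\cdot(-1)$... more carefully, $e^\ast = \mid - e$ and $\d(\mid)=0$, so $\d^\ast(e^\ast)^\ast = -\d^\ast(e)^\ast$, hence $(d\alpha_0)^\ast(\d) = (i\,de)^\ast(\d) = -i\,\d^\ast(e)^\ast = -i\big(-\d^\ast(e^\ast)^\ast\big)$; using that $\d^\ast$ is the adjoint of $\d$ this equals $i\,\d(e) = d\alpha_0(\d)$.

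The computation is entirely routine; there is essentially no obstacle. The only point requiring a moment's care is the $\ast$-relation for the generator $d\alpha_0 = i\,de$, because of the bookkeeping with $e^\ast = \mid - e$ and the factor of $i$ interacting with complex conjugation — one must track signs so that the two occurrences of $-1$ (one from $e^\ast=\mid-e$, one from $\bar i = -i$) cancel correctly and yield $(d\alpha_0)^\ast = d\alpha_0$. Everything else reduces to the single observation that derivations of $\KN$ land in the square-zero ideal $\KaN$, together with the elementary multiplication rules $ea=a$, $ae=0$, $ab=0$ for $a,b\in\KaN$.
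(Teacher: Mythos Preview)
Your proof is correct and follows essentially the same route as the paper's: both evaluate the $1$-forms on an arbitrary $\d\in\g$, use that every derivation lands in $\KaN$ together with the multiplication rules $ea=a$, $ae=0$, $ab=0$ for $a,b\in\KaN$, and then handle the $\ast$-relation for $d\alpha_0$ by tracking the sign from $e^\ast=\mid-e$ against the conjugation of $i$. The only difference is presentational---your $\ast$-computation for $d\alpha_0$ is a bit convoluted with the mid-sentence self-correction, whereas the paper writes it as the single chain $(d\alpha_0)^\ast(\d)=-i(\d^\ast e)^\ast=-i\,\d(e^\ast)=-i\,\d(\mid-e)=i\,\d e$.
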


\begin{proof}
  Since $\d a\in\KaN$ for all $\d\in\Der(\KN)$ and $a\in\KN$ one obtains
  \begin{align*}
    &ed\alpha_I(\d) = e\d(\alpha_I) = \d(\alpha_I)=d\alpha_I(\d)\\
    &\alpha_id\alpha_I(\d) = \alpha_i\d(\alpha_I) = 0
  \end{align*}
  and in the same way one obtains the right bimodule relations.
  Next, one computes
  \begin{align*}
    &(d\alpha_k)^\ast(\d) = \paraa{d\alpha_k(\d^\ast)}^\ast
      = \paraa{\d^\ast\alpha_k}^\ast = \d(\alpha_k^\ast)
      =\d\alpha_k = d\alpha_k(\d)\\
    &(d\alpha_0)^\ast(\d) = \paraa{ide(\d^\ast)}^\ast
      = -i\paraa{\d^\ast e}^\ast = -i\d(e^\ast) = -i\d(\mid-e)
      =i\d e = d\alpha_0(\d)
  \end{align*}
  for $\d\in\g$, showing that $(d\alpha_I)^\ast=d\alpha_I$.
\end{proof}

\noindent
The above result implies that $\Omegaoneg$ is generated by
$\{d\alpha_I\}_{I=0}^N$ as a complex vector space; i.e. every element
$\omega\in\Omegaoneg$ can be written as $\omega=\lambda^Id\alpha_I$
for $\lambda^I\in\complex$. Moreover,
Proposition~\ref{prop:Omega1.bimodule.structure} implies that
$\Omegaoneg$ is not a free module since $\alpha_k\omega=0$ for any
$\omega\in\Omegaoneg$.

For $\KN$, it turns out that there are no higher order forms, as the
next result shows.

\begin{proposition}\label{prop:Omegak.zero}
  If $\g\subseteq\Der(\KN)$ then $\Omegag{k}=0$ for $k\geq 2$.
\end{proposition}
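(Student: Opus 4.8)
The plan is to use the description of the restricted calculus: $\Omegag{2}$ is generated as a left module by products $da\,db$ with $a,b\in\KN$, and since $\KN$ is finite-dimensional and spanned by $\mid,e,\alpha_1,\ldots,\alpha_N$ with $d\mid=0$, the whole module $\Omegag{2}$ is spanned over $\complex$ by the elements $\alpha_I d\alpha_J$ and by $d\alpha_I d\alpha_J$ for $I,J\in\{0,1,\ldots,N\}$ (recall the convention $\alpha_0=ie$). By Proposition~\ref{prop:Omega1.bimodule.structure} the left action of every $\alpha_i$ on $\Omegaoneg$ is zero and $e$ acts as the identity; together with $d\alpha_0=ide$ this already kills the terms $\alpha_i d\alpha_J$, and leaves $e\,d\alpha_J=d\alpha_J$ and the quadratic terms. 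So the crux is to show that each product $d\alpha_I\,d\alpha_J$, viewed as an element of $\Omegabg{2}$, is the zero alternating bilinear map on $\g\times\g$.

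First I would compute $d\alpha_I\,d\alpha_J(\d,\d')$ explicitly. Using the product formula for the restricted calculus, $(d\alpha_I\,d\alpha_J)(\d,\d') = d\alpha_I(\d)\,d\alpha_J(\d') - d\alpha_I(\d')\,d\alpha_J(\d) = \d(\alpha_I)\,\d'(\alpha_J) - \d'(\alpha_I)\,\d(\alpha_J)$, where for $I=0$ one reads $\alpha_0 = ie$, i.e. $\d(\alpha_0)=i\d(e)$. Now I invoke the key structural fact, recorded right after Proposition~\ref{prop:derivations}: every derivation $\d\in\Der(\KN)$ maps $\KN$ into the ideal $\KaN$. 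In particular $\d(\alpha_I)\in\KaN$ and $\d'(\alpha_J)\in\KaN$ for all $I,J$, and by \eqref{eq:kronecker.alg.relations} (namely $\alpha_j\alpha_k=0$) the product of any two elements of $\KaN$ vanishes. Hence $\d(\alpha_I)\,\d'(\alpha_J)=0$ and $\d'(\alpha_I)\,\d(\alpha_J)=0$, so $(d\alpha_I\,d\alpha_J)(\d,\d')=0$ for all $\d,\d'\in\g$.

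Since $\Omegag{2}$ is spanned over $\complex$ by the $d\alpha_I\,d\alpha_J$ (the $\alpha_i\,d\alpha_J$ already being zero), this shows $\Omegag{2}=0$. For $k\geq 3$ one has $\Omegag{k}$ generated by $a_0\,da_1\cdots da_k$, and any such element factors through a product of a $d\alpha_I\,d\alpha_J$ sitting inside; more directly, the same argument gives $(da_1\cdots da_k)(\d_1,\ldots,\d_k) = \sum_\sigma \sgn(\sigma)\prod_{r=1}^k \d_{\sigma(r)}(a_r)$, a sum of products of $k\geq 2$ elements of $\KaN$, each of which is zero. Hence $\Omegag{k}=0$ for all $k\geq 2$.

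I do not expect a real obstacle here: the whole statement collapses once one notices that $\Der(\KN)$ lands inside the square-zero ideal $\KaN$, so every differential form of degree $\geq 2$ evaluates to a product of at least two elements of $\KaN$ and therefore vanishes. The only point requiring a little care is bookkeeping: making sure that the $I=0$ case ($d\alpha_0=ide$) is handled by the convention $\alpha_0=ie$, that the left action of $\alpha_i$ really does annihilate $\Omegaoneg$ so that the non-$d$-generated part of $\Omegag{2}$ contributes nothing, and that the spanning set of $\Omegag{k}$ has been correctly identified from the definition of the restricted calculus.
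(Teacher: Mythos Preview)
Your proposal is correct and follows essentially the same approach as the paper: both arguments rest on the observation that $\d a\in\KaN$ for every $\d\in\Der(\KN)$, so that $(da_1\cdots da_k)(\d_1,\ldots,\d_k)$ is a sum of products of $k\geq 2$ elements of the square-zero ideal $\KaN$ and hence vanishes. The paper's proof is simply the more compressed version---it goes directly to general $k$ without first isolating the case $k=2$, and it does not bother with the bookkeeping about the left $\KN$-action (which is unnecessary once $da_1\cdots da_k=0$ is established). One small slip in your write-up: the phrase ``spanned over $\complex$ by the elements $\alpha_I\,d\alpha_J$'' refers to $1$-forms, not $2$-forms; presumably you meant $\alpha_i\,d\alpha_J\,d\alpha_K$, but in any case this detour is not needed.
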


\begin{proof}
  The module $\Omegag{k}$ is generated by elements of the form
  $da_1da_2\cdots da_k$ for $a_k\in\KN$, and
  \begin{align*}
    da_1da_2\cdots da_k(\d_1,\ldots,\d_k)
    =\sum_{\sigma\in S_k}\sgn(\sigma)(\d_{\sigma(1)}a_1)(\d_{\sigma(2)}a_2)\cdots(\d_{\sigma(k)}a_k).
  \end{align*}
  For $k\geq 2$ the above expression is zero since $\d a\in\KaN$ for
  all $\d\in\Der(\KN)$ and $a\in\KN$.
\end{proof}

\noindent
Recall that the $k$'th cohomology $H^k(\Omegag{})$ is defined  as the
closed $k$-forms modulo the exact $k$-forms; that is,
\begin{align*}
  H^k(\Omegag{}) = \im(d_{k-1})\slash \ker(d_k).
\end{align*}
Since $\Omegag{k}=0$ for $k\geq 2$, by
Proposition~\ref{prop:Omegak.zero}, the only remaining cohomology is
$H^1(\Omegag{})$.

\begin{proposition}
  If $\g\subseteq\Der(\KN)$ then $H^1(\Omegag{})=0$.
\end{proposition}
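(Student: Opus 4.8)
The plan is to show that every closed $1$-form is exact, i.e. $H^1(\Omegag{}) = \ker d_1 = \im d_0$, where the last equality is what we must prove since $\Omegag{2} = 0$ forces $\ker d_1 = \Omegag{1}$ by Proposition~\ref{prop:Omegak.zero}. So the real content is: \emph{every} $1$-form in $\Omegaoneg$ is exact. First I would use the earlier observation that $\Omegaoneg$ is spanned as a complex vector space by $\{d\alpha_I\}_{I=0}^N$, where $d\alpha_0 = ide$. Hence it suffices to exhibit each $d\alpha_I$ as $d a$ for some $a \in \KN$, after which linearity finishes the job. For $I = 1,\dots,N$ this is immediate: $d\alpha_i = d(\alpha_i)$ with $a = \alpha_i \in \KN$. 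For $I = 0$ we have $d\alpha_0 = i\, de = d(ie)$ with $a = ie \in \KN$. Thus every generator of $\Omegaoneg$ is already exact, and since $d_0$ is $\complex$-linear, every element $\omega = \lambda^I d\alpha_I$ equals $d(\lambda^I a_I)$ where $a_0 = ie$ and $a_i = \alpha_i$.

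Putting this together: $\im d_0 = \Omegaoneg$, and since $\Omegag{2} = 0$ gives $\ker d_1 = \Omegaoneg$ as well, we get $H^1(\Omegag{}) = \Omegaoneg / \Omegaoneg = 0$. I would write the proof in essentially two lines, noting that the key input is simply that $\KN$ is finite dimensional so that $\Omegaoneg$ is spanned by the $d\alpha_I$ — a fact already established right after Proposition~\ref{prop:Omega1.bimodule.structure} — together with the trivial remark that each spanning $1$-form is of the form $da$.

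There is essentially no obstacle here; the statement is a formality once the spanning set for $\Omegaoneg$ is in hand. The only thing to be careful about is the bookkeeping: $\Omegaoneg$ is spanned by $d\alpha_I$ for $I = 0,\dots,N$ but is \emph{not} free (by Proposition~\ref{prop:Omega1.bimodule.structure}, $\alpha_k\omega = 0$), so one should phrase the argument purely in terms of the spanning property and $\complex$-linearity of $d_0$, not in terms of a basis. A clean proof would read: since $\Omegaoneg$ is generated as a complex vector space by $\{d\alpha_I\}_{I=0}^N$ and $\Omegag{2}=0$, every $1$-form is closed; moreover $d\alpha_0 = d(ie)$ and $d\alpha_k = d(\alpha_k)$ for $k=1,\dots,N$ show that each generator is exact, hence by linearity every closed $1$-form is exact, and $H^1(\Omegag{}) = 0$.
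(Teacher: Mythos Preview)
Your proposal is correct and follows essentially the same approach as the paper: both use Proposition~\ref{prop:Omegak.zero} to see that all $1$-forms are closed, then observe that any $\omega=\lambda^Id\alpha_I$ is exact since $\lambda^Id\alpha_I = d(\lambda^0(ie)+\lambda^i\alpha_i)$. The paper's proof is the two-line version you anticipated.
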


\begin{proof}
  Since $\Omegag{k}=0$ for $k\geq 2$ (by
  Proposition~\ref{prop:Omegak.zero}) every element of $\Omegag{1}$ is
  closed. Moreover, since every element of $\Omegag{1}$ can be written
  as $\lambda^Id\alpha_I$ for $\lambda^I\in\complex$  it follows immediately that
  every element of $\Omegag{1}$ is exact, since
  \begin{equation*}
    \lambda^0de + \lambda^1d\alpha_1+\cdots+\lambda^Nd\alpha_N
    =d\paraa{\lambda^0e + \lambda^1\alpha_1+\cdots+\lambda^N\alpha_N}.\qedhere
  \end{equation*}
\end{proof}

\noindent
Let us now consider the structure of $\Omegaoneg$ in more detail.
Given $a\in\A$ it follows from
Proposition~\ref{prop:Omega1.bimodule.structure} that
\begin{align*}
  \angles{da} = \{\lambda da:\lambda\in\complex\}
\end{align*}
is a sub-bimodule of $\Omegag{1}$ with
\begin{equation}\label{eq:da.module.relations}
  \begin{split}
    &e da = da\qquad da\cdot e = 0\\
    &\alpha_ida = 0 \qquad da\cdot \alpha_i =0,
  \end{split}
\end{equation}
and due to the above bimodule relations one finds that when $da\neq 0$
\begin{align*}
  \angles{da}\simeq\{\lambda\alpha_k:\lambda\in\complex\}
  \equiv M_{\alpha_k}
\end{align*}
as a $\KN$-bimodule for $k=1,\ldots,N$.

\begin{proposition}\label{prop:Malphak.projective.bimodule}
  $M_{\alpha_k}=\{\lambda\alpha_k:\lambda\in\complex\}$ is a
  projective $\KN$-bimodule.
\end{proposition}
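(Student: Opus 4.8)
The plan is to exhibit $M_{\alpha_k}$ as a direct summand of the free $\KN$-bimodule on one generator, namely $F = \KN\otimesC\KN$ with actions $c(x\otimesC y)d = (cx)\otimesC(yd)$; projectivity then follows since a direct summand of a free module is projective.

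First I would record the bimodule structure of $M_{\alpha_k}$, which, being the sub-bimodule of $\KN$ spanned by $\alpha_k$, is governed by \eqref{eq:kronecker.alg.relations}: for $a = \lambda\mid+\mu e + a^i\alpha_i$ and $b = \lambda'\mid+\mu'e+b^i\alpha_i$ one has
\[
  a\cdot\alpha_k = (\lambda+\mu)\alpha_k \qand \alpha_k\cdot b = \lambda'\alpha_k .
\]
Thus $M_{\alpha_k}$ is one-dimensional over $\complex$, the left action of $a$ being multiplication by $\lambda+\mu$ and the right action of $b$ multiplication by $\lambda'$. The same relations give $ae = (\lambda+\mu)e$ and $(\mid-e)b = \lambda'(\mid-e)$ in $\KN$, a coincidence I will exploit in a moment.

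Next I would introduce the map $\pi\colon F\to F$ defined by $\pi(x\otimesC y) = xe\otimesC(\mid-e)y$. Since $e^2 = e$ and $(\mid-e)^2 = \mid-e$, the map $\pi$ is an idempotent, and it is manifestly left and right $\KN$-linear; hence $F = \pi F\oplus(\mathrm{id}-\pi)F$ as $\KN$-bimodules and $\pi F$, a direct summand of a free bimodule, is projective. Using $\KN e = \complex e$ and $(\mid-e)\KN = \complex(\mid-e)$ one finds $\pi F = \KN e\otimesC(\mid-e)\KN = \complex\,\paraa{e\otimesC(\mid-e)}$, a one-dimensional bimodule.

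Finally I would check that $p\colon\pi F\to M_{\alpha_k}$, $p(x\otimesC y) = x\alpha_k y$, is an isomorphism of $\KN$-bimodules. It is well defined and $\KN$-bilinear by construction, and it is bijective because it sends the spanning vector $e\otimesC(\mid-e)$ to $e\alpha_k(\mid-e) = \alpha_k$ (using $e\alpha_k = \alpha_k$ and $\alpha_k e = 0$), which spans $M_{\alpha_k}$. Therefore $M_{\alpha_k}\cong\pi F$ is a projective $\KN$-bimodule. The only step demanding a little care is the identification $\pi F\cong M_{\alpha_k}$: one must verify that the left action of $a$ on $e\otimesC(\mid-e)$ reduces to multiplication by $\lambda+\mu$ and the right action of $b$ to multiplication by $\lambda'$, which is exactly the description of $M_{\alpha_k}$ recorded above; the remaining verifications are purely formal.
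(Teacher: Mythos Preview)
Your proof is correct and is essentially the same argument as the paper's, just phrased in the language of bimodules rather than the enveloping algebra: the paper takes the idempotent $\hat{e}=e\otimesC\overline{(\mid-e)}$ in $\KN^e=\KN\otimesC\KN^{op}$ and shows $M_{\alpha_k}\cong\KN^e\hat{e}$ as left $\KN^e$-modules, which under the standard identification of $\KN$-bimodules with left $\KN^e$-modules is exactly your splitting $F=\pi F\oplus(\mathrm{id}-\pi)F$ with $\pi F\cong M_{\alpha_k}$. Your map $p(x\otimesC y)=x\alpha_k y$ and the paper's $\phi$ agree on the generator $e\otimesC(\mid-e)$, so the two proofs coincide.
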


\begin{proof}
  Let $\KN^e=\KN\otimesC\KN^{op}$ denote the enveloping algebra of
  $\KN$, and let $\hat{e}\in\KN^e$ denote the idempotent
  $\hat{e}=e\otimesC\overline{(\mid-e)}$, where $\bar{a}$ denotes
  $a\in\KN$ as an element in $\KN^{op}$;
  i.e. $\bar{a}\cdot\bar{b}=\overline{ba}$. Then $\KN^e\hat{e}$ is a
  projective left $\KN^e$-module. Let us now show that $M_{\alpha_k}$
  is isomorphic to $\KN^e\hat{e}$ as a left $\KN^e$-module. By
  definition, an arbitrary element of $\KN^e\hat{e}$ can be written as
  sums of elements of the form
  \begin{align*}
    (a\otimesC\bar{b})(e\otimesC\overline{(\mid-e)})
    =ae\otimesC\overline{(\mid-e)b}
  \end{align*}
  and by using the algebra relations one concludes that
  \begin{align*}
    \KN^e\hat{e} = \{\lambda\paraa{e\otimesC\overline{(\mid-e)}}:\lambda\in\complex\}.
  \end{align*}
  Defining $\phi:\KN^e\hat{e}\to M_{\alpha_k}$ as
  \begin{align*}
    \phi\paraa{\lambda\paraa{e\otimesC\overline{(\mid-e)}}} = \lambda\alpha_k
  \end{align*}
  it is clear that $\phi$ is a vector space isomorphism. Furthermore, for
  \begin{align*}
    a_1 = \lambda_1\mid+\mu_1e + a_1^i\alpha_i\qand
    a_2 = \lambda_2\mid+\mu_2e + a_2^i\alpha_i
  \end{align*}
  one finds that
  \begin{align*}
    &\phi\paraa{(a_1\otimesC \bar{a}_2)e\otimesC\overline{(\mid-e)}}
      =\phi\paraa{a_1e\otimesC\overline{((\mid-e)a_2)}}
      =(\lambda_1+\mu_1)\lambda_2\alpha_k\\
    &(a_1\otimesC\bar{a}_2)\phi\paraa{e\otimesC\overline{(\mid-e)}}
      = a_1\alpha_k a_2 = (\lambda_1+\mu_1)\alpha_k a_2 = (\lambda_1+\mu_1)\lambda_2\alpha_k
  \end{align*}
  showing that $\phi$ is an isomorphism of left
  $\KN^e$-modules. Hence, $M_k$ is projective as a left $\KN^e$-module
  or, equivalently, as a $\KN$-bimodule.
\end{proof}

\noindent
In particular, Proposition~\ref{prop:Malphak.projective.bimodule}
implies that $\angles{da}$ is projective as a
$\KN$-bimodule (and consequently projective as both a left and right
$\KN$-module).

As we have seen, for any choice of Lie algebra $\g\subseteq\Der(\KN)$,
the module $\Omegaoneg$ is a finite dimensional vector space spanned by
$d\alpha_0,d\alpha_1,\ldots,d\alpha_N$; however, the dimension of $\Omegaoneg$
clearly depends on the choice of $\g$. Let
$n=\dim_{\complex}(\Omegag{1})$ and let $\{dx_a\}_{a=1}^n$ denote a (vector
space) basis of $\Omegag{1}$. Hence, for each $\omega\in\Omegag{1}$
there exist unique $\lambda^1,\ldots,\lambda^n\in\complex$ such that
$\omega=\lambda^adx_a$.

\begin{proposition}\label{prop:Omegag.direct.sum}
  Let $\g\subseteq\Der(\KN)$ and assume that $\{dx_a\}_{a=1}^n$ is a
  (vector space) basis of $\Omegaoneg$. Then
  \begin{align*}
    \Omegaoneg\simeq \bigoplus_{a=1}^n\angles{dx_a}
  \end{align*}
  as a $\KN$-bimodule.
\end{proposition}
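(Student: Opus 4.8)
The plan is to exhibit an explicit isomorphism from $\Omegaoneg$ to the external direct sum $\bigoplus_{a=1}^n \angles{dx_a}$ and check it respects the $\KN$-bimodule structure. Since $\{dx_a\}_{a=1}^n$ is a $\complex$-vector space basis of $\Omegaoneg$, every $\omega\in\Omegaoneg$ has a unique expression $\omega = \lambda^a dx_a$ with $\lambda^a\in\complex$, so the map
\begin{align*}
  \Phi:\Omegaoneg \to \bigoplus_{a=1}^n\angles{dx_a},\qquad
  \Phi(\lambda^a dx_a) = (\lambda^1 dx_1,\ldots,\lambda^n dx_n)
\end{align*}
is a well-defined $\complex$-linear bijection (its inverse sends a tuple to the sum of its components, which is unambiguous because each $\angles{dx_a}\subseteq\Omegaoneg$). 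The content of the proposition is thus entirely that $\Phi$ is a bimodule map.

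First I would verify left $\KN$-linearity. For $b\in\KN$ and $\omega=\lambda^a dx_a$, using that $\Omegaoneg$ is a left module one has $b\omega = \lambda^a(b\,dx_a)$, and each $b\,dx_a$ again lies in $\Omegaoneg$, hence expands in the basis. The key point is that the left action preserves each summand: by Proposition~\ref{prop:Omega1.bimodule.structure} (applied to the spanning set $d\alpha_I$, hence to any $1$-form), writing $b=\lambda\mid+\mu e+b^i\alpha_i$ we get $b\,dx_a = \lambda\,dx_a + \mu e\,dx_a + b^i\alpha_i dx_a = (\lambda+\mu)dx_a$, since $e\,dx_a = dx_a$ and $\alpha_i dx_a = 0$. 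Therefore $b\,dx_a\in\angles{dx_a}$, and likewise $dx_a\cdot b = dx_a\cdot(\lambda\mid) = \lambda\,dx_a\in\angles{dx_a}$ since $dx_a\cdot e=0$ and $dx_a\cdot\alpha_i=0$. Consequently both actions act diagonally: $b\cdot(\lambda^a dx_a) = (b\cdot\lambda^1 dx_1,\ldots)$ and similarly on the right, which is exactly the module structure on $\bigoplus_a\angles{dx_a}$, so $\Phi(b\omega)=b\Phi(\omega)$ and $\Phi(\omega b)=\Phi(\omega)b$.

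There is no serious obstacle here; the only subtlety worth flagging explicitly is that the decomposition $\omega=\lambda^a dx_a$ must be unique for $\Phi$ to be well defined, which is guaranteed by $\{dx_a\}$ being a \emph{basis} (not merely a spanning set), and that the bimodule structure on the direct sum is the componentwise one — so one must confirm the action genuinely stays within each $\angles{dx_a}$ rather than mixing components, which is precisely what the computation $b\,dx_a=(\lambda+\mu)dx_a$, $dx_a\cdot b = \lambda\,dx_a$ shows. I would also remark that $\Phi$ is automatically $\ast$-compatible if one wishes, since $(dx_a)^\ast$ again lies in $\Omegaoneg$ and expands in the basis; but the statement as given only claims a bimodule isomorphism, so this is optional.
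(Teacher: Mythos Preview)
Your proof is correct and follows essentially the same route as the paper: define the obvious $\complex$-linear bijection $\Phi(\lambda^a dx_a)=(\lambda^1 dx_1,\ldots,\lambda^n dx_n)$ and verify it is a bimodule map using the relations of Proposition~\ref{prop:Omega1.bimodule.structure}. The only cosmetic difference is that the paper checks compatibility on the algebra generators $e$ and $\alpha_i$ separately, whereas you compute the action of a general $b=\lambda\mid+\mu e+b^i\alpha_i$ directly as $b\,dx_a=(\lambda+\mu)dx_a$ and $dx_a\cdot b=\lambda\,dx_a$; the content is identical.
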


\begin{proof}
  Let $M=\oplus_{a=1}^n\angles{dx_a}$ and defining $\phi:\Omegaoneg\to M$
  as
  \begin{align*}
    \phi(\lambda^adx_a) = \lambda^1dx_1\oplus\cdots\oplus\lambda^ndx_n
  \end{align*}
  it is clear that $\phi$ is an isomorphism of vector spaces. To prove
  that $\phi$ is a left module homomorphism one checks that
  \begin{align*}
    &\phi(e \lambda^adx_a)-e\phi(\lambda^adx_a)
    = \phi(\lambda^adx_a)-e\phi(\lambda^adx_a)\\
      &\qquad\qquad=(\mid-e)\paraa{\lambda^1dx_1\oplus\cdots\oplus\lambda^ndx_n}=0\\
    &\phi(\alpha_i\lambda^adx_a)-\alpha_i\phi(\lambda^adx_a)
      = -\alpha_i\paraa{\lambda^1dx_1\oplus\cdots\oplus\lambda^ndx_n}=0,
  \end{align*}
  and to show that $\phi$ is a right module homomorphism, one computes
  \begin{align*}
    &\phi(\lambda^adx_a\cdot e)-\phi(\lambda^adx_a)e
    =-\paraa{\lambda^1dx_1\oplus\cdots\oplus\lambda^ndx_n}e=0\\
    &\phi(\lambda^adx_a\cdot \alpha_i)-\phi(\lambda^adx_a)\alpha_i
    =-\paraa{\lambda^1dx_1\oplus\cdots\oplus\lambda^ndx_n}\alpha_i=0.
  \end{align*}
  We conclude that $\phi:\Omegaoneg\to M$ is a bimodule isomorphism.  
\end{proof}

\noindent
The following is then an immediate consequence of the above result.

\begin{corollary}
  $\Omegaoneg$ is a projective $\KN$-bimodule.
\end{corollary}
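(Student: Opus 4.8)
The plan is to deduce the corollary directly from the two preceding results. By Proposition~\ref{prop:Omegag.direct.sum}, for any choice of vector space basis $\{dx_a\}_{a=1}^n$ of $\Omegaoneg$ one has a $\KN$-bimodule isomorphism $\Omegaoneg\simeq\bigoplus_{a=1}^n\angles{dx_a}$. So it suffices to observe that each summand $\angles{dx_a}$ is a projective $\KN$-bimodule, because a finite direct sum of projective (left $\KN^e$-) modules is again projective.

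First I would note that each $dx_a$ is a nonzero element of $\Omegaoneg$ (being a member of a basis), and that $\angles{dx_a}=\{\lambda dx_a:\lambda\in\complex\}$ carries the bimodule relations \eqref{eq:da.module.relations}, so by the remark following those relations $\angles{dx_a}\simeq M_{\alpha_k}$ as a $\KN$-bimodule (for any $k\in\{1,\dots,N\}$). Then Proposition~\ref{prop:Malphak.projective.bimodule} gives that $M_{\alpha_k}$, hence each $\angles{dx_a}$, is a projective $\KN$-bimodule, i.e.\ a projective left $\KN^e$-module. A finite direct sum of projective left $\KN^e$-modules is projective, so $\bigoplus_{a=1}^n\angles{dx_a}$ is projective, and therefore so is the isomorphic bimodule $\Omegaoneg$.

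There is essentially no obstacle here; the only mild point to be careful about is the degenerate case $n=0$ (i.e.\ $\Omegaoneg=0$), which is trivially projective, and the case where some care is needed to ensure the chosen $dx_a$ are genuinely nonzero — but this is automatic since a basis consists of nonzero vectors. So the proof is just the following assembly.

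\begin{proof}
  Let $\{dx_a\}_{a=1}^n$ be a vector space basis of $\Omegaoneg$. If $n=0$ then $\Omegaoneg=0$ is trivially projective, so assume $n\geq 1$. Each $dx_a$ is nonzero, and $\angles{dx_a}=\{\lambda dx_a:\lambda\in\complex\}$ satisfies the bimodule relations \eqref{eq:da.module.relations}; hence, as observed before Proposition~\ref{prop:Malphak.projective.bimodule}, $\angles{dx_a}\simeq M_{\alpha_1}$ as a $\KN$-bimodule. By Proposition~\ref{prop:Malphak.projective.bimodule}, $M_{\alpha_1}$ is a projective $\KN$-bimodule, so each $\angles{dx_a}$ is a projective left $\KN^e$-module, and therefore $\bigoplus_{a=1}^n\angles{dx_a}$ is a projective left $\KN^e$-module, being a finite direct sum of projective modules. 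By Proposition~\ref{prop:Omegag.direct.sum}, $\Omegaoneg\simeq\bigoplus_{a=1}^n\angles{dx_a}$ as a $\KN$-bimodule, and we conclude that $\Omegaoneg$ is a projective $\KN$-bimodule.
\end{proof}
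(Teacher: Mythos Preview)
Your proof is correct and follows exactly the approach the paper intends: the corollary is stated as an immediate consequence of Proposition~\ref{prop:Omegag.direct.sum} together with the observation (made just before it) that each $\angles{da}\simeq M_{\alpha_k}$ is projective by Proposition~\ref{prop:Malphak.projective.bimodule}. You have simply spelled out the details (including the harmless degenerate case $n=0$) that the paper leaves implicit.
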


\noindent
Recall that a differential calculus is called connected if $da=0$
implies that $a=\lambda\mid$ for some $\lambda\in\complex$. For $\Omegaoneg$,
one proves the following criterion for connectedness.

\begin{proposition}\label{prop:connected.vspace.basis}
  $\Omegaoneg$ is connected if and only if $d\alpha_0,d\alpha_1,\ldots,d\alpha_N$ is
  a vector space basis of $\Omegaoneg$.
\end{proposition}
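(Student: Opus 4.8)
The plan is to prove the two implications separately, using the description of $\Omegaoneg$ as the span of $d\alpha_0,\dots,d\alpha_N$ together with the bimodule relations from Proposition~\ref{prop:Omega1.bimodule.structure}. For the ``if'' direction, suppose $\{d\alpha_0,\dots,d\alpha_N\}$ is a vector space basis. Take $a=\lambda\mid+\mu e+a^i\alpha_i\in\KN$ with $da=0$. Since $de=-i\,d\alpha_0$ and $d\alpha_i=d\alpha_i$, and $d$ is complex-linear and annihilates $\mid$, we get $da=\mu\,de+a^i\,d\alpha_i=-i\mu\,d\alpha_0+a^i\,d\alpha_i$. Because the $d\alpha_I$ are linearly independent, $da=0$ forces $\mu=0$ and $a^i=0$ for all $i$, hence $a=\lambda\mid$. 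This shows $\Omegaoneg$ is connected.

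For the ``only if'' direction I would argue by contrapositive: assume $d\alpha_0,d\alpha_1,\dots,d\alpha_N$ are linearly dependent and produce a nonzero $a\in\KN$ with $da=0$ and $a\neq\lambda\mid$. A nontrivial relation $\sum_I c^I d\alpha_I=0$ with not all $c^I$ zero gives, after rewriting $d\alpha_0=i\,de$, a relation $c^0 i\,de+\sum_{i\ge 1}c^i\,d\alpha_i=0$, i.e. $d\paraa{ic^0 e+\sum_{i\ge1}c^i\alpha_i}=0$. Set $a=ic^0e+\sum_{i\ge1}c^i\alpha_i$; then $da=0$. The only thing to check is that $a$ is not of the form $\lambda\mid$: since $\mid,e,\alpha_1,\dots,\alpha_N$ are a basis of $\KN$, $a=\lambda\mid$ would force $ic^0=0$ and $c^i=0$ for $i\ge 1$ (and $\lambda=0$), contradicting the assumption that the $c^I$ are not all zero. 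Hence $a\neq\lambda\mid$ while $da=0$, so $\Omegaoneg$ is not connected.

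I do not anticipate a serious obstacle here; the statement is essentially a bookkeeping exercise translating ``$da=0$'' into a linear condition on the coefficients of $a$ via the spanning set $\{d\alpha_I\}$. The one point that needs a little care is the bijective correspondence between linear relations among the $d\alpha_I$ and elements $a$ (modulo $\complex\mid$) with $da=0$: one should note that $d$ restricted to the span of $\{e,\alpha_1,\dots,\alpha_N\}$ is exactly the linear map sending $e\mapsto -i\,d\alpha_0$, $\alpha_i\mapsto d\alpha_i$, whose kernel corresponds precisely to the linear dependencies among $\{d\alpha_I\}$, and that $\ker d$ on all of $\KN$ is $\complex\mid$ plus this contribution. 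Making that identification explicit is the cleanest way to see both implications at once.
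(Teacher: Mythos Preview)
Your proof is correct and follows essentially the same idea as the paper's: both reduce the question to the observation that $da=-i\mu\,d\alpha_0+a^i d\alpha_i$ for $a=\lambda\mid+\mu e+a^i\alpha_i$, so that $da=0\Rightarrow a\in\complex\mid$ is equivalent to linear independence of the $d\alpha_I$ (which, since these always span $\Omegaoneg$, is the same as their being a basis). The paper packages both directions into a single chain of equivalences, while you spell out each implication separately (using the contrapositive for the reverse), but the content is identical.
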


\begin{proof}
  Let $a=\lambda\mid + \mu e + a^i\alpha_i$ be an arbitrary element of
  $\KN$. $\Omegaoneg$ being connected is equivalent to
  \begin{align*}
    &da = 0\implies a=\lambda\mid \equivalent\\
    &-i\mu d\alpha_0 + a^id\alpha_i = 0 \implies \mu=a^i=0
  \end{align*}
  which is equivalent to
  $d\alpha_0,d\alpha_1,\ldots,d\alpha_N$ being a vector space basis of
  $\Omegaoneg$.
\end{proof}

\noindent
Let us consider the case when $\g=\Der(\KN)$ in which case we write
$\Omegaoneg=\Omega^1_{\Der}$. In that case it is easy to see
that $\Omega_{\Der}^1$ is connected. Namely, for
$a=\lambda\mid + \mu e + a^i\alpha_i$, assuming $da=0$ gives
\begin{align*}
  &da(\d_k)=0 \implies \mu\d_ke = 0\implies i\mu\alpha_k = 0\implies \mu = 0\\
  &da(\d^k_l) = 0\implies a^i\d^k_l\alpha_i = 0\implies a^k\alpha_l = 0\implies a^k=0
\end{align*}
implying that $a=\lambda\mid$, giving that $\{d\alpha_I\}_{I=0}^N$ is
a vector space basis of $\Omega_{\Der}^1$ via
Proposition~\ref{prop:connected.vspace.basis}. Furthermore,
Proposition~\ref{prop:Omegag.direct.sum} implies that
\begin{align*}
  \Omega^1_{\Der} \simeq \bigoplus_{I=0}^N\angles{d\alpha_I}
\end{align*}
as a $\KN$-bimodule. 

\subsection{Traces and integration}

A trace on a $\ast$-algebra $\A$ is a $\complex$-linear functional
$\tau:\A\to\complex$ such that $\tau(a^\ast)=\overline{\tau(a)}$ and
$\tau(ab)=\tau(ba)$ for all $a,b\in\A$. If $\tau(\mid)=1$ then the
trace is said to be normalized. Moreover, if $\tau(a^\ast a)\geq 0$
for all $a\in\A$ then $\tau$ is said to be a positive trace. Note
that, if the algebra is finite dimensional, a trace is determined by
its values on the basis elements.

\begin{proposition}\label{prop:traces.KN}
  A $\complex$-linear functional $\tau:\KN\to\complex$ is a trace on $\KN$ if
  and only if $\tau(\alpha_i)=0$ for $i=1,\ldots,N$ and there exist
  $\tau_0,\tau_1\in\reals$ such that
  \begin{align*}
    \tau(\mid) = \tau_0\qand
    \tau(e) = \tfrac{1}{2}\tau_0+i\tau_1.
  \end{align*}
\end{proposition}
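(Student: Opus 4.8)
The plan is to characterize traces on $\KN$ by exploiting that a trace is a linear functional determined by its values on the basis $\{\mid, e, \alpha_1, \ldots, \alpha_N\}$, and then impose the two defining conditions $\tau(ab) = \tau(ba)$ and $\tau(a^\ast) = \overline{\tau(a)}$. First I would translate the cyclicity condition into a statement about commutators: $\tau(ab) = \tau(ba)$ for all $a, b$ is equivalent to $\tau$ vanishing on the subspace spanned by all commutators $[a,b]$. By the explicit commutator formula \eqref{eq:a1a2.commutator}, namely $[a_1, a_2] = (\mu_1 a_2^i - \mu_2 a_1^i)\alpha_i$, every commutator lies in $\KaN$, and conversely choosing $a_1 = e$ and $a_2 = \alpha_j$ gives $[e, \alpha_j] = \alpha_j$, so the span of all commutators is exactly $\KaN$. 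Hence the cyclicity condition is equivalent to $\tau(\alpha_i) = 0$ for $i = 1, \ldots, N$, leaving $\tau(\mid)$ and $\tau(e)$ unconstrained by cyclicity.

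Next I would impose the $\ast$-condition $\tau(a^\ast) = \overline{\tau(a)}$. Since $\tau$ is $\complex$-linear and already vanishes on the $\alpha_i$ (and $\alpha_i^\ast = \alpha_i$), it suffices to check this on $\mid$ and $e$. From $\mid^\ast = \mid$ we get $\tau(\mid) = \overline{\tau(\mid)}$, so $\tau(\mid) = \tau_0 \in \reals$. From $e^\ast = \mid - e$ we get $\tau(\mid - e) = \overline{\tau(e)}$, i.e. $\tau_0 - \tau(e) = \overline{\tau(e)}$, which says $2\Re\tau(e) = \tau_0$; writing $\tau(e) = \tfrac{1}{2}\tau_0 + i\tau_1$ with $\tau_1 \in \reals$ records precisely this, with $\tau_1$ free. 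Conversely, any $\complex$-linear functional with $\tau(\alpha_i) = 0$, $\tau(\mid) = \tau_0 \in \reals$, and $\tau(e) = \tfrac{1}{2}\tau_0 + i\tau_1$ satisfies both conditions by reversing these computations, so it is a trace.

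There is no real obstacle here — the proposition is essentially a bookkeeping exercise once the commutator formula from \eqref{eq:a1a2.commutator} and the $\ast$-structure from Proposition~\ref{prop:star.algebra.structure} are in hand. The only point requiring a moment's care is verifying that the span of commutators is all of $\KaN$ rather than a proper subspace: one must exhibit enough commutators (here $[e,\alpha_j] = \alpha_j$ suffices for each $j$) to conclude the cyclicity condition forces $\tau$ to annihilate every $\alpha_i$, and one should note that commutators never touch the $\mid$ or $e$ components, so those values genuinely remain free apart from the $\ast$-constraint.
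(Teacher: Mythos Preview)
Your proposal is correct and follows essentially the same approach as the paper: both arguments use the commutator formula \eqref{eq:a1a2.commutator} to reduce cyclicity to $\tau(\alpha_i)=0$, and both extract the constraint on $\tau(\mid)$ and $\tau(e)$ from the $\ast$-condition. Your phrasing in terms of the span of commutators (together with the explicit witness $[e,\alpha_j]=\alpha_j$) and your check of the $\ast$-condition on basis elements are slightly more conceptual than the paper's direct computation on general elements, but the underlying argument is the same.
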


\begin{proof}
  Let $\tau$ be a $\complex$-linear functional on $\KN$, let
  $a_1,a_2\in\KN$ be arbitrary elements and write
  \begin{align*}
    &a_1 = \lambda_1\mid+\mu_1e+a_1^i\alpha_i\\
    &a_2 = \lambda_2\mid+\mu_2e+a_2^i\alpha_i.
  \end{align*}
  One finds that
  \begin{align*}
    \tau(a_1a_2)-\tau(a_2a_1)= (\mu_1a_2^i-\lambda_1a_1^i)\tau(\alpha_i),
  \end{align*}
  and requiring this to be zero for all $a_1,a_2\in\KN$ is equivalent
  to $\tau(\alpha_i)=0$ for $i=1,\ldots,N$. Furthermore, for
  $a=\lambda\mid+\mu e+a^i\alpha_i$, $\tau(a^\ast)=\overline{\tau(a)}$ for all $a\in\KN$
  is equivalent to
  \begin{align*}
    \bar{\lambda}\paraa{\tau(\mid)-\overline{\tau(\mid)}}
    +\bar{\mu}\paraa{\tau(\mid)-\tau(e)-\overline{\tau(e)}}=0
  \end{align*}
  for all $\lambda,\mu\in\complex$, which is equivalent to
  \begin{align*}
    \tau(\mid)=\tau_0\qand \tau(e)=\tfrac{1}{2}\tau_0+i\tau_1 
  \end{align*}
  for some $\tau_0,\tau_1\in\reals$.
\end{proof}

\noindent 
It follows that there is a 1-parameter family of normalized traces on
$\KN$, given by
\begin{align*}
  \tau(\mid) = 1\qquad \tau(e) = \tfrac{1}{2}+i\tau_1\qquad \tau(\alpha_i)=0
\end{align*}
for $\tau_1\in\reals$.

The next result shows that there are no non-trivial positive traces on
$\KN$.

\begin{proposition}
  If $\tau$ is a trace on $\KN$ such that $\tau(a^\ast a)\geq 0$ for
  all $a\in\KN$ then $\tau(a)=0$ for all $a\in\KN$.
\end{proposition}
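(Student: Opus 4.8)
The plan is to use the explicit description of traces on $\KN$ from Proposition~\ref{prop:traces.KN} together with the explicit $\ast$-structure from Proposition~\ref{prop:star.algebra.structure}, and to exhibit a particular element $a$ whose ``norm'' $\tau(a^\ast a)$ is forced to be non-positive unless $\tau$ vanishes identically. Since $\tau$ is determined by $\tau(\mid)=\tau_0\in\reals$, $\tau(e)=\tfrac12\tau_0+i\tau_1$ with $\tau_1\in\reals$, and $\tau(\alpha_i)=0$, positivity will put constraints on the two real parameters $\tau_0,\tau_1$, and I expect these constraints to be incompatible with $\tau_0\neq 0$.

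First I would compute $\tau(a^\ast a)$ for a general $a=\lambda\mid+\mu e+a^i\alpha_i$. Using $a^\ast=(\bar\lambda+\bar\mu)\mid-\bar\mu e+\bar a^i\alpha_i$ and the product formula for $\KN$, the $\alpha_i$-terms contribute nothing to the trace (as $\tau(\alpha_i)=0$), so $\tau(a^\ast a)$ depends only on $\lambda,\mu$ and reduces to a Hermitian quadratic form in $(\lambda,\mu)\in\complex^2$ with coefficients built from $\tau_0$ and $\tau_1$. The key computation is to identify this $2\times 2$ Hermitian matrix $H$ explicitly; positivity of $\tau$ then forces $H\geq 0$, in particular all diagonal entries and the determinant are non-negative.

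The main step is then to read off the constraints. Taking $a=e$ gives $\tau(e^\ast e)=\tau((\mid-e)e)=\tau(e-e^2)=\tau(e)-\tau(e)=0$, which is fine, but taking $a=\mid-e=e^\ast$ (or $a=e$ with the roles swapped) and also $a=\mid$ gives $\tau(\mid)=\tau_0\geq 0$. The decisive choice is something like $a=i\mid+e$ or more cleverly an element mixing $\mid$ and $e$ so as to exploit the imaginary part $\tau_1$: since $e$ is not self-adjoint, $\tau(e)$ has a genuinely imaginary part, and I expect to find a vector $(\lambda,\mu)$ on which the quadratic form evaluates to $-2\tau_1\cdot(\text{something})$ or to $-\tfrac12\tau_0$, contradicting positivity unless $\tau_1=0$ and then unless $\tau_0=0$. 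Concretely, I anticipate that $H$ has the form $\begin{pmatrix}\tau_0 & *\\ * & 0\end{pmatrix}$-ish with a nonzero off-diagonal entry involving $\tau_0,\tau_1$, and a Hermitian matrix with a zero diagonal entry can only be positive semidefinite if the corresponding off-diagonal entries also vanish; that will pin down $\tau_0$ (and $\tau_1$) to zero.

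The main obstacle is simply carrying out the quadratic-form computation carefully and making sure the off-diagonal (the $\mu$-diagonal) entry is correctly identified, since a sign error there would change whether positivity is violated. Once $H$ is written down, the conclusion $\tau_0=\tau_1=0$, hence $\tau\equiv 0$ by Proposition~\ref{prop:traces.KN}, is immediate. I would present it by computing $\tau(a^\ast a)$ in closed form, specializing to $a=\mid$ to get $\tau_0\geq0$, then to a well-chosen combination such as $a=\mid-2e$ or $a=(1+i)\mid+e$ to force $\tau_0\leq 0$ and $\tau_1=0$, concluding $\tau=0$.
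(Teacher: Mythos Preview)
Your proposal is correct and follows essentially the same approach as the paper: both compute $\tau(a^\ast a)$ explicitly as a real quadratic expression in $(\lambda,\mu)$ using Propositions~\ref{prop:star.algebra.structure} and~\ref{prop:traces.KN}, then exhibit choices of $\lambda,\mu$ making it negative unless $\tau_0=\tau_1=0$. The only cosmetic difference is that the paper does a direct case split (real $\lambda,\mu$ to kill $\tau_0$, then $\lambda=i\lambda'$ to kill $\tau_1$), whereas you organize the same computation as a $2\times 2$ Hermitian form with vanishing $(\mu,\mu)$-entry and invoke the standard fact that positive semidefiniteness then forces the off-diagonal entry $\tfrac12\tau_0+i\tau_1$ to vanish; your suggested test elements $a=\mid$, $a=\mid-2e$, $a=(1+i)\mid+e$ amount to the same specializations.
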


\begin{proof}
  Let $\tau$ be a trace on $\KN$. From Proposition~\ref{prop:traces.KN} it follows that
  \begin{align*}
    \tau(\mid) = \tau_0\qquad
    \tau(e) = \tfrac{1}{2}\tau_0+i\tau_1\qquad
    \tau(\alpha_i) = 0\quad i=1,\ldots,N
  \end{align*}
  for some $\tau_0,\tau_1\in\complex$. For
  $a=\lambda\mid+\mu e+a^i\alpha_i$ one obtains
  \begin{align*}
    \tau(a^\ast a)
    &= (|\lambda|^2+\lambda\bar{\mu})\tau_0
    +(\mu\bar{\lambda}-\bar{\mu}\lambda)\paraa{\tfrac{1}{2}\tau_0+i\tau_1}\\
    &= \paraa{|\lambda|^2+\Re(\lambda\bar{\mu})}\tau_0 + 2\Im(\lambda\bar{\mu})\tau_1. 
  \end{align*}
  If $\tau_0\neq 0$ one can always find $\lambda,\mu\in\reals$ such that
  \begin{align*}
    \tau(a^\ast a) = \lambda(\lambda+\mu)\tau_0<0,
  \end{align*}
  and if $\tau_1\neq 0$ one can always find $\lambda',\mu\in\reals$ such that
  for $\lambda=i\lambda'$
  \begin{align*}
    \tau(a^\ast a) = \lambda'^2\tau_0 + 2\mu\lambda'\tau_1
    =\lambda'(\lambda'\tau_0+2\mu\tau_1)<0.
  \end{align*}
  Hence, if $\tau(a^\ast a)\geq 0$ for all $a\in\KN$ then
  $\tau_0=\tau_1=0$, implying that $\tau(a)=0$ for all $a\in\KN$.
\end{proof}

\noindent
Now, we would like to use traces to integrate elements of $\Omegaoneg$
and we will proceed in analogy with a $1$-dimensional interval
$[a,b]\subseteq\reals$. For $f\in C^\infty([a,b])$ one has
\begin{align*}
  \int_a^b df = \int_a^b\frac{df}{dx}dx = f(b)-f(a),
\end{align*}
and $\tilde{\tau}(f)=f(b)-f(a)$ is clearly a trace on the
(commutative) algebra $C^\infty([a,b])$. Thus, setting
\begin{align*}
  \int da = \tau(a)
\end{align*}
for $a\in\KN$ the map $\int:\Omegaoneg\to\complex$ is well defined if
$\tau(a)=0$ whenever $da=0$. Assuming that $\Omegaoneg$ is connected
(giving $da=0\Rightarrow a=\lambda\mid$) it is necessary that
$\tau(\mid)=\tau_0=0$, which is consistent with $f(b)-f(a)=0$ for any
constant function $f$. It follows that
\begin{align*}
  \int d\alpha_0 = i\tau(e) = i(i\tau_1) = -\tau_1\qqand \int d\alpha_k = 0
\end{align*}
for $k=1,\ldots,N$.

\subsection{Connections and hermitian forms}

Let us now consider connections on $\Omegaoneg$; i.e bilinear maps
$\nabla:\g\times\Omegaoneg\to\Omegaoneg$ satisfying a Leibniz rule,
which for left connections reads
\begin{align*}
  \nabla_\d (a\omega) = a\nabla_{\d}\omega + (\d a)\omega
\end{align*}
for $a\in\KN$ and $\omega\in\Omegaoneg$. Alternatively, as is common
for a general differential graded algebra (not necessarily defined by
derivations), a left connection on $\Omegaoneg$ is a linear map
$\nabla:\Omegaoneg\to\Omegaoneg\otimes_{\KN}\Omegaoneg$ satisfying
\begin{align*}
  \nabla(a\omega) = a\nabla\omega + da\otimes_{\KN}\omega,
\end{align*}
and in case the differential graded algebra is defined from
derivations, one can recover a connection
$\nabla:\g\times\Omegaoneg\to\Omegaoneg$ by evaluating the leftmost
1-form in the tensor product against the derivation.  However, the
converse is not necessarily true, as it depends on the image of the
action of the derivations in $\g$. In fact, for $\KN$ there are no
non-trivial connections
$\nabla:\Omegaoneg\to\Omegaoneg\otimes_{\KN}\Omegaoneg$ due to the
following result.

\begin{lemma}
  $\Omegaoneg\otimes_{\KN}\Omegaoneg=0$.
\end{lemma}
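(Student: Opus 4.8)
The plan is to show that the tensor product $\Omegaoneg\otimesKN\Omegaoneg$ vanishes by exploiting the bimodule relations from Proposition~\ref{prop:Omega1.bimodule.structure}, specifically the fact that $\Omegaoneg$ is annihilated by $e$ on the right (i.e. $\omega\cdot e=0$) while being generated on the left by elements fixed by $e$ (i.e. $e\cdot\omega=\omega$). First I would recall that every element of $\Omegaoneg$ is a complex linear combination $\lambda^I d\alpha_I$, and that in the tensor product over $\KN$ we may freely move elements of $\KN$ across the tensor sign.

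The key computation is the following: for any $\omega,\eta\in\Omegaoneg$, since $e\eta=\eta$ by Proposition~\ref{prop:Omega1.bimodule.structure}, we have
\begin{align*}
  \omega\otimesKN\eta = \omega\otimesKN(e\eta) = (\omega e)\otimesKN\eta = 0\otimesKN\eta = 0,
\end{align*}
using that $\omega e=0$, again by Proposition~\ref{prop:Omega1.bimodule.structure}. Since the elementary tensors $\omega\otimesKN\eta$ span $\Omegaoneg\otimesKN\Omegaoneg$, this shows the whole tensor product is zero.

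I do not anticipate any real obstacle here; the only point requiring a word of care is that $\Omegaoneg$ is a \emph{central} bimodule (so that the tensor product over $\KN$ is well-behaved) and that the identities $\omega e=0$ and $e\eta=\eta$ hold for \emph{all} elements, not merely generators — but both facts are recorded in Proposition~\ref{prop:Omega1.bimodule.structure}, since an arbitrary element of $\Omegaoneg$ is a $\complex$-linear combination of the $d\alpha_I$ and the relations there are stated for each $d\alpha_I$. Thus the argument is essentially a one-line consequence of the bimodule structure.
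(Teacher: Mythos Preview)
Your proof is correct and is essentially the same as the paper's: both exploit the identities $e\eta=\eta$ and $\omega e=0$ from Proposition~\ref{prop:Omega1.bimodule.structure} to collapse $\omega\otimesKN\eta$ to zero, the only cosmetic difference being that the paper carries this out on the generators $d\alpha_I\otimesKN d\alpha_J$ first and then extends by $\complex$-linearity, whereas you work directly with arbitrary $\omega,\eta$. Your remark about centrality is unnecessary, though, since the tensor product over $\KN$ is defined for any right module tensored with any left module.
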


\begin{proof}
  Let us start by noting that
  \begin{align*}
    d\alpha_I\otimesKN d\alpha_J = d\alpha_I\otimesKN ed\alpha_J
    = (d\alpha_I)e\otimesKN d\alpha_J = 0
  \end{align*}
  since $ed\alpha_J=d\alpha_J$ and $(d\alpha_I)e$ = 0 by
  Proposition~\ref{prop:Omega1.bimodule.structure}. Since every element
  $\omega\in\Omegaoneg$ can be written as $\omega=\lambda^Id\alpha_I$
  (where $\lambda^I\in\complex)$, it follows that an arbitrary element
  of $\Omegaoneg\otimesKN\Omegaoneg$ can be written as
  \begin{align*}
    \lambda^{IJ}d\alpha_I\otimesKN d\alpha_J
  \end{align*}
  for $\lambda^{IJ}\in\complex$. Hence, by the previous argument it
  follows that $\Omegaoneg\otimesKN\Omegaoneg=0$.
\end{proof}

\noindent
Note that the above result does not prevent the existence of
connections $\nabla:\g\times\Omegaoneg\to\Omegaoneg$, as we shall see
in the following. Let us start by showing that any bilinear map
$\nabla:\g\times\Omegaoneg\to\Omegaoneg$ defines a connection.

\begin{proposition}\label{prop:C.linear.gives.connection}
  Let $\nabla$ be a $\complex$-bilinear map
  \begin{equation*}
    \nabla:\g\times\Omegaoneg\to\Omegaoneg.
  \end{equation*}
  Then $\nabla$ is a bimodule connection on $\Omegaoneg$. Moreover, if
  $\d\in\g$ then $\nabla_{\d}:\Omegaoneg\to\Omegaoneg$ is a bimodule
  homomorphism.
\end{proposition}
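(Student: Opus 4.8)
The statement has two parts: (i) every $\complex$-bilinear map $\nabla:\g\times\Omegaoneg\to\Omegaoneg$ is automatically a bimodule connection, and (ii) each $\nabla_\d$ is a bimodule homomorphism. The key observation driving everything is that by Proposition~\ref{prop:Omega1.bimodule.structure} the module $\Omegaoneg$ is annihilated on both sides by the ideal $\KaN$, and the idempotent $e$ acts as the identity on the left and as zero on the right (so $\mid - e$ acts as the identity on the right and zero on the left). Consequently, for any $a = \lambda\mid + \mu e + a^i\alpha_i \in \KN$ and any $\omega \in \Omegaoneg$, the left action collapses to $a\omega = (\lambda+\mu)\omega$ and the right action collapses to $\omega a = \lambda\omega$. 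I would state and use this reduction at the very start.

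\textbf{Step 1: Connection axioms.} $\complex$-bilinearity immediately gives additivity in both slots and additivity of $\nabla$ in the derivation. For $Z(\KN)$-linearity note that $Z(\KN) = \complex\mid$, so $z\cdot\d = \mu\d$ for $z = \mu\mid$, and $\complex$-linearity in $\g$ gives $\nabla_{z\cdot\d}\omega = \mu\nabla_\d\omega = z\nabla_\d\omega$; likewise for the right version $\nabla_{\d\cdot z}\omega = (\nabla_\d\omega)z$ since the right action of $z = \mu\mid$ is multiplication by $\mu$. For the left Leibniz rule I must check $\nabla_\d(a\omega) = a\nabla_\d\omega + (\d a)\omega$. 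Since $\d a \in \KaN$ (noted after Proposition~\ref{prop:derivations}) and $\KaN$ kills $\Omegaoneg$, the term $(\d a)\omega$ vanishes; on the other side, writing $a = \lambda\mid + \mu e + a^i\alpha_i$, the reduction gives $a\omega = (\lambda+\mu)\omega$, hence $\nabla_\d(a\omega) = (\lambda+\mu)\nabla_\d\omega$ by $\complex$-linearity, and also $a\nabla_\d\omega = (\lambda+\mu)\nabla_\d\omega$ since $\nabla_\d\omega \in \Omegaoneg$. So both sides agree. The right Leibniz rule $\nabla_\d(\omega a) = (\nabla_\d\omega)a + \omega(\d a)$ is checked the same way: $\omega(\d a) = 0$ and $\omega a = \lambda\omega$ gives $\nabla_\d(\omega a) = \lambda\nabla_\d\omega = (\nabla_\d\omega)a$. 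Thus $\nabla$ is both a left and a right connection, and since $\Omegaoneg$ is a central bimodule (remark after the definition of central bimodule), it is a bimodule connection.

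\textbf{Step 2: Bimodule homomorphism property.} Fix $\d \in \g$. I must show $\nabla_\d(a\omega) = a\nabla_\d\omega$ and $\nabla_\d(\omega a) = (\nabla_\d\omega)a$ for all $a \in \KN$, $\omega \in \Omegaoneg$. Both are immediate from the computations in Step~1: the Leibniz correction terms $(\d a)\omega$ and $\omega(\d a)$ both vanish because $\d a \in \KaN$ annihilates $\Omegaoneg$, leaving exactly the module-homomorphism identities. Additivity of $\nabla_\d$ is part of being a connection, so $\nabla_\d$ is a bimodule homomorphism.

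\textbf{Main obstacle.} There is essentially no obstacle here — the content is entirely in recognizing that $\KaN$ annihilates $\Omegaoneg$ on both sides and that $\d a$ always lands in $\KaN$, which makes every Leibniz correction term vanish. The only point requiring a little care is making the bookkeeping of the left/right actions of $e$ and $\mid$ explicit so that the identities $a\omega = (\lambda+\mu)\omega$ and $\omega a = \lambda\omega$ are clearly justified by Proposition~\ref{prop:Omega1.bimodule.structure}; once that is in place, the verification of each axiom is a one-line consequence of $\complex$-bilinearity.
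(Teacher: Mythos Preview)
Your proof is correct and follows essentially the same approach as the paper: both arguments hinge on the reduction $a\omega=(\lambda+\mu)\omega$, $\omega a=\lambda\omega$ coming from Proposition~\ref{prop:Omega1.bimodule.structure}, together with the fact that $\d a\in\KaN$ annihilates $\Omegaoneg$, so the Leibniz correction terms vanish. Your write-up is slightly more explicit about the $Z(\KN)$-linearity axiom (which the paper leaves implicit), but the substance is identical.
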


\begin{proof}
  To prove that $\nabla$ is a bimodule connection, one has to check that
  \begin{align}
    &\nabla_{\d}(a\omega) - a\nabla_{\d}\omega - (\d a)\omega = 0\label{eq:left.Leibniz}\\
    &\nabla_{\d}(\omega a) - (\nabla_{\d}\omega)a - \omega(\d a) = 0\label{eq:right.Leibniz}
  \end{align}
  for $\d\in\g$, $\omega\in\Omegaoneg$ and $a\in\KN$.  From
  Proposition~\ref{prop:Omega1.bimodule.structure} it follows that
  $f\omega=\omega f=0$ for all $f\in\KaN$ and $\d\in\g$, implying that
  $(\d a)\omega=\omega(\d a)=0$ since $\d a\in\KaN$ for all
  $a\in\KN$. Thus, for $a=\lambda\mid + \mu e+a^i\alpha_i$, giving
  $a\omega=(\lambda+\mu)\omega$ and $\omega a=\lambda\omega$ for all
  $\omega\in\Omegaoneg$, one finds that
  \begin{align*}
    &\nabla_{\d}(a\omega) - a\nabla_{\d}\omega - (\d a)\omega
    = \nabla_{\d}\paraa{(\lambda+\mu)\omega}
    -(\lambda+\mu)\nabla_{\d}\omega = 0\\
    &\nabla_{\d}(\omega a) - (\nabla_{\d}\omega)a - \omega(\d a)
    = \nabla_{\d}(\lambda\omega)
    -\lambda\nabla_{\d}\omega = 0,
  \end{align*}
  showing that \eqref{eq:left.Leibniz} and \eqref{eq:right.Leibniz}
  are satisfied. Moreover, since $(\d a)\omega=\omega(\d a)=0$ it
  follows that
  \begin{align*}
    \nabla_{\d}(a\omega) = a\nabla_{\d}\omega\qand
    \nabla_{\d}(\omega a) = (\nabla_{\d}\omega)a
  \end{align*}
  showing that $\nabla_{\d}$ is indeed a bimodule homomorphism.
\end{proof}

\noindent
Note in particular that the above result implies that there exists a trivial
connection on $\Omegaoneg$, given by $\nabla_{\d}\omega=0$ for all
$\d\in\g$ and $\omega\in\Omegaoneg$.

Next, let us consider hermitian forms on $\Omegaoneg$. It turns out
that, due to the bimodule relations, the components of any
hermitian form lie in the ideal $\KaN$.

\begin{proposition}\label{prop:hab.in.KaN}
  Let $\{dx_a\}_{a=1}^n$ denote a vector space basis of $\Omegaoneg$
  and let $\omega=\omega^adx_a$, $\eta=\eta^adx_a$, with
  $\omega^a,\eta^a\in\complex$, be arbitrary elements of
  $\Omegaoneg$. The map $h:\Omegaoneg\times\Omegaoneg\to\KN$, defined by
  \begin{equation}\label{eq:def.hform.Omega}
    h(\omega,\eta) = \omega^ah_{ab}\bar{\eta}^b,
  \end{equation}
  is a left hermitian form on $\Omegaoneg$ if and only if
  $h_{ba}=h_{ab}^\ast\in\KaN$ for $a,b=1,\ldots,N$.
\end{proposition}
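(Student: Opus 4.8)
The plan is to verify the three defining axioms of a left hermitian form directly against the formula \eqref{eq:def.hform.Omega}, and then to pin down exactly what constraints on the coefficients $h_{ab}\in\KN$ each axiom forces. Additivity in the first slot is immediate from $\complex$-bilinearity of the expression $\omega^a h_{ab}\bar\eta^b$, so the real content is in the remaining two conditions. First I would handle the $\ast$-symmetry condition $h(\omega,\eta)^\ast = h(\eta,\omega)$: computing $h(\omega,\eta)^\ast = (\omega^a h_{ab}\bar\eta^b)^\ast = \eta^b h_{ab}^\ast\bar\omega^a$ using Proposition~\ref{prop:star.algebra.structure} (and the fact that $\omega^a,\eta^b$ are scalars, so $(\omega^a)^\ast=\bar\omega^a$), and comparing with $h(\eta,\omega) = \eta^b h_{ba}\bar\omega^a$, one reads off that $h_{ab}^\ast = h_{ba}$ for all $a,b$ — a condition symmetric in its statement, so equivalent to $h_{ba}=h_{ab}^\ast$.

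Next I would turn to left $\KN$-linearity: $h(a\omega,\eta) = a\,h(\omega,\eta)$ for all $a\in\KN$. Here is where Proposition~\ref{prop:Omega1.bimodule.structure} does the real work. Writing $a=\lambda\mid+\mu e+a^i\alpha_i$, the bimodule relations give $a\,dx_c = (\lambda+\mu)\,dx_c$ for every basis element (since $e\,dx_c = dx_c$ and $\alpha_i\,dx_c = 0$), so $a\omega = (\lambda+\mu)\omega$ as a scalar multiple. Thus the left-linearity axiom reads $(\lambda+\mu)\,h(\omega,\eta) = a\,h(\omega,\eta)$ for all $a$, i.e. $a\cdot h(\omega,\eta) = (\lambda+\mu)h(\omega,\eta)$. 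Taking $a=e$ (so $\lambda=0$, $\mu=1$) forces $e\,h(\omega,\eta) = h(\omega,\eta)$, and taking $a=\alpha_i$ forces $\alpha_i\,h(\omega,\eta) = 0$; by the multiplication rule in $\KN$, an element $x=\nu\mid+\rho e + x^j\alpha_j$ satisfies $ex=x$ and $\alpha_i x = 0$ for all $i$ precisely when $\nu=\rho=0$, i.e. $x\in\KaN$. Since $h(\omega,\eta)=\omega^a h_{ab}\bar\eta^b$ must lie in $\KaN$ for all choices of scalars $\omega^a,\bar\eta^b$, this is equivalent to every $h_{ab}\in\KaN$. Conversely, once all $h_{ab}\in\KaN$ one checks that for any $a\in\KN$, $a\cdot(\omega^a h_{ab}\bar\eta^b) = (\lambda+\mu)(\omega^a h_{ab}\bar\eta^b)$ using $ey=y$, $\alpha_i y = 0$ for $y\in\KaN$ (stated just after \eqref{eq:kronecker.alg.relations}), which matches $h(a\omega,\eta)$.

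Assembling the two directions: if $h$ is a left hermitian form, the $\ast$-symmetry axiom gives $h_{ba}=h_{ab}^\ast$ and the left-linearity axiom gives $h_{ab}\in\KaN$; conversely, if $h_{ba}=h_{ab}^\ast\in\KaN$ then the displayed formula satisfies all three axioms, as checked above. I do not anticipate a genuine obstacle here; the only subtlety worth stating carefully is the passage from "$\alpha_i h(\omega,\eta)=0$ for all $i$ and all scalar inputs" to "$h_{ab}\in\KaN$ for all $a,b$", which uses that the $h_{ab}$ can be isolated by varying $\omega^a,\bar\eta^b$ over $\complex$ and that $\KaN$ is exactly the set of elements annihilated on the left by every $\alpha_i$ and fixed by $e$.
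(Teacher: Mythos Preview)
Your forward direction (if $h_{ba}=h_{ab}^\ast\in\KaN$ then $h$ is a left hermitian form) is correct and matches the paper's argument. The gap is in the converse. You claim that an element $x=\nu\mid+\rho e+x^j\alpha_j$ satisfies $ex=x$ and $\alpha_ix=0$ for all $i$ \emph{precisely} when $\nu=\rho=0$. This is false: one computes $ex=(\nu+\rho)e+x^j\alpha_j$ and $\alpha_ix=\nu\alpha_i$, so both conditions are equivalent to $\nu=0$ alone and say nothing about $\rho$. Indeed $x=e$ itself satisfies $ee=e$ and $\alpha_ie=0$ but $e\notin\KaN$. Thus left $\KN$-linearity of $h$ by itself only forces the $\mid$-coefficient of each $h_{ab}$ to vanish, not the $e$-coefficient.

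The missing ingredient is the $\ast$-symmetry $h_{ab}^\ast=h_{ba}$, which you derived but then did not feed back into this step. Once you know $h_{ab}=\rho_{ab}\,e+c_{ab}^j\alpha_j$, apply $\ast$ and use $e^\ast=\mid-e$ to get
\[
  h_{ab}^\ast=\bar\rho_{ab}\mid-\bar\rho_{ab}\,e+\bar c_{ab}^j\alpha_j;
\]
since this must equal $h_{ba}$, whose $\mid$-coefficient has already been shown to vanish, you obtain $\rho_{ab}=0$ and hence $h_{ab}\in\KaN$. Equivalently, the derived right conjugate-linearity $h(\omega,a\eta)=h(\omega,\eta)a^\ast$ with $a=e$ gives $h_{ab}=h_{ab}(\mid-e)$, i.e.\ $h_{ab}e=0$, which kills the $e$-coefficient directly. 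The paper's own proof invokes the same two bimodule identities you use and contains a computational slip at exactly this spot; the clean repair is the one just described.
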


\begin{proof}
  First, assume that $h_{ba}=h_{ab}\in\KaN$. To show that
  \eqref{eq:def.hform.Omega} defines a left hermitian form on
  $\Omegaoneg$ one needs to check that
  \begin{align*}
    h(\omega,\eta)^\ast = h(\eta,\omega)\qquad
    h(a\omega,\eta) = ah(\omega,\eta)\qquad
  \end{align*}
  for all $\omega,\eta\in\Omegaoneg$ and $a\in\KN$ (the necessary
  bilinearity properties are trivial). It
  follows immediately from \eqref{eq:def.hform.Omega} that
  \begin{align*}
    h(\omega,\eta)^\ast
    &= (\omega^ah_{ab}\bar{\eta}^b)^\ast
      = \eta^bh_{ba}\bar{\omega}^a = h(\eta,\omega)
  \end{align*}
  since $h_{ab}^\ast=h_{ba}$.  Recall that, for $b\in\KaN$ and
  $a=\lambda\mid + \mu e + a^i\alpha_i$, one has
  \begin{align*}
    ab = (\lambda+\mu)b\qand
    ba = \lambda b
  \end{align*}
  giving
  \begin{align*}
    &h(a\omega,\eta)
      = h\paraa{(\lambda+\mu)\omega^adx_a,\eta^bdx_b}
      = (\lambda+\mu)h_{ab}\omega^a\bar{\eta}^b = ah_{ab}\omega^a\bar{\eta}^b
      = ah(\omega,\eta)
  \end{align*}
  using that $a\omega = (\lambda+\mu)\omega$ for all
  $\omega\in\Omegaoneg$. Hence, $h$ is a left hermitian form on
  $\Omegaoneg$.

  Conversely, assume that $h$ is a left hermitian form on
  $\Omegaoneg$. Let us first show that the bimodule relations imply
  that $h_{ab}\in\KaN$. To this end, let us fix $a,b$ and write
  $h_{ab}=\lambda\mid + \mu e + a^i\alpha_i$. Requiring
  $h(edx_a,dx_b)=eh(dx_a,dx_b)$ gives
  \begin{align*}
    h_{ab} = eh_{ab}\implies
    \lambda\mid+\mu e + a^i\alpha_i = (\lambda+\mu)e + a^i\alpha_i\implies\lambda = 0
  \end{align*}
  and, moreover, requiring $h(\alpha_idx_a,dx_b)=\alpha_ih(dx_a,dx_b)$ gives
  \begin{align*}
    0=\alpha_ih_{ab}\implies 0 = \mu e\implies \mu=0
  \end{align*}
  showing that $h_{ab}\in\KaN$ for all $a,b=1,\ldots,N$.
\end{proof}

\noindent
Recall that a hermitian form $h$ is invertible if the
associated map $\hh:\Omegaoneg\to(\Omegaoneg)^\ast$, given by
$\hh(\omega)(\eta) = h(\omega,\eta)$ is a bijection. If $h$ is not
invertible, it is said to be degenerate. As we will see in
Proposition~\ref{prop:no.invertible.forms}, every hermitian form on
$\Omegaoneg$ is degenerate. However, in order to prove this statement,
let us first recall the following result which gives a
characterization of projective modules with invertible hermitian
forms.

\begin{proposition}[\cite{a:levi-civita.class.nms}, Proposition 2.6]\label{prop:huab.equiv.projective}
  Let $M$ be a left $\A$-module with generators $\{e_a\}_{a=1}^n$, let
  $h$ be a left hermitian form on $M$ and set
  $h_{ab}=h(e_a,e_b)$. Then $M$ is a projective module and $h$ is an
  invertible hermitian form if and only if there exist $h^{ab}\in\A$
  (for $a,b=1,\ldots,n$) such that $(h^{ab})^\ast=h^{ba}$ and
  $h_{cp}h^{pq}e_q=e_c$  for $a,b,c=1,\ldots,n$.
\end{proposition}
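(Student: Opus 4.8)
The plan is to prove Proposition~\ref{prop:huab.equiv.projective}, i.e.\ that a finitely generated left $\A$-module $M$ with generators $\{e_a\}_{a=1}^n$ and a left hermitian form $h$ is projective with $h$ invertible \emph{if and only if} there exist $h^{ab}\in\A$ with $(h^{ab})^\ast=h^{ba}$ and $h_{cp}h^{pq}e_q=e_c$. Since the statement is cited from \cite{a:levi-civita.class.nms}, I would reconstruct the argument rather than invoke it. The natural strategy is to work with the dual module $M^\ast$ and the canonical short exact sequences associated to the generating set, translating the ``invertible hermitian form'' condition into a splitting statement.

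\textbf{The ``if'' direction.} Suppose such $h^{ab}$ exist. First I would produce an explicit right inverse, and hence an isomorphism, for the map $\hh:M\to M^\ast$ defined by $\hh(m)(m')=h(m,m')$. The generating relation $h_{cp}h^{pq}e_q=e_c$ suggests defining a map $\psi:M^\ast\to M$ by sending a functional $\varphi$ to $h^{ab}\varphi(e_a)e_b$ (up to conjugation conventions dictated by the $\ast$-structure; I would fix signs so that left $\A$-linearity works out, using $(h^{ab})^\ast=h^{ba}$). One then checks $\hh\circ\psi$ and $\psi\circ\hh$ are identities on generators: for $\psi(\hh(e_c))$ one gets $h^{ab}h(e_c,e_a)e_b = h^{ab}h_{ca}e_b$, which by the hypothesis (after reindexing and using $h_{cp}h^{pq}e_q=e_c$) equals $e_c$; since the $e_c$ generate $M$ and the maps are additive and $\A$-linear, this suffices. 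This shows $h$ is invertible. For projectivity, I would exhibit the standard idempotent: the generators give a surjection $\pi:\A^n\to M$, $\pi(a^1,\ldots,a^n)=a^ae_a$, and the data $h^{ab}$ together with $h$ yields a section $s:M\to\A^n$, $s(m)=(h(m,e_b)h^{b1},\ldots)$ or a similar formula, with $\pi\circ s=\mathrm{id}_M$ verified again on generators using $h_{cp}h^{pq}e_q=e_c$; a module admitting such a section of a free-module surjection is projective.

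\textbf{The ``only if'' direction.} Conversely, assume $M$ is projective and $h$ invertible. Projectivity gives a section $s:M\to\A^n$ of $\pi$, equivalently an idempotent $P=(P^a_b)\in M_n(\A)$ with $PM^n\cong M$ in the appropriate sense and $P^a_b e_a = e_b$ (the standard ``$e_b = $ image of the $b$-th basis vector'' identity). Invertibility of $h$ gives a two-sided inverse $\hhi:M^\ast\to M$ of $\hh$. The dual generators $e^a\in M^\ast$ defined by pairing against the idempotent, namely $e^a(m) = $ the $a$-th coordinate of $s(m)$, satisfy $e^a(e_b)=P^a_b$ and $\sum_a e_a\, e^a = \mathrm{id}$ in the sense of a dual basis relative to $P$. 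I would then \emph{define} $h^{ab}$ by applying $\hhi$ to these dual functionals and pairing: roughly $h^{ab} = e^a\paraa{\hhi(e^b)}$ or $h^{ab} = $ the coordinate expression of $\hhi(e^b)$. The $\ast$-symmetry $(h^{ab})^\ast=h^{ba}$ should follow from the hermitian symmetry $h(m_1,m_2)^\ast=h(m_2,m_1)$ transported through $\hhi$, and the key relation $h_{cp}h^{pq}e_q=e_c$ should reduce, after unwinding definitions, to the statement $\hh\circ\hhi=\mathrm{id}$ composed with the dual-basis identity $P^a_b e_a = e_b$.

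\textbf{Main obstacle.} The routine parts are the bilinearity and additivity bookkeeping; the genuine difficulty is getting the $\ast$-structure conventions exactly right so that all the maps ($\psi$, $s$, and the formula for $h^{ab}$) are honestly \emph{left} $\A$-linear rather than conjugate-linear, and so that $(h^{ab})^\ast = h^{ba}$ comes out with the correct index order. In particular, $M^\ast$ with its natural left action, the conjugate-linearity of $\hh$ in one slot, and the position of complex conjugation in $h(\omega,\eta)=\omega^a h_{ab}\bar\eta^b$-type formulas must all be tracked consistently; a sign or conjugation slip here would make the two relations fail to match up. Once the conventions are pinned down, each verification is a short computation on the generators $e_a$, leveraging that both $\hh$ and $\pi$ are determined by their values there.
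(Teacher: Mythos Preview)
The paper does not supply its own proof of this proposition: it is quoted verbatim from \cite{a:levi-civita.class.nms} and used as a black box in the subsequent Proposition~\ref{prop:no.invertible.forms}. So there is nothing in the present paper to compare your argument against.

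That said, your outline is sound and is essentially the standard proof one would expect in the cited reference. A couple of remarks that would sharpen your sketch. For the ``if'' direction it is cleanest to do projectivity first: with $s^a(m)=h(m,e_c)h^{ca}$ one checks $s^a$ is left $\A$-linear and $s^a(m)e_a=m$ on generators, hence everywhere, so $(e_a,s^a)$ is a dual basis and $M$ is projective. Then for invertibility, take $\psi(\varphi)=\varphi(e_c)h^{cb}e_b$ on the conjugate-linear dual $M^\ast$; this is left $\A$-linear because $(a\varphi)(e_c)=a\varphi(e_c)$, and both $\psi\circ\hh=\mathrm{id}_M$ and $\hh\circ\psi=\mathrm{id}_{M^\ast}$ follow from $h_{cp}h^{pq}e_q=e_c$, the second after applying $\varphi\in M^\ast$ to this identity and using $(h^{ab})^\ast=h^{ba}$, $h_{ab}^\ast=h_{ba}$.

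For the ``only if'' direction your idea is right: projectivity gives left-linear coordinate functionals $f^a$ with $m=f^a(m)e_a$; invertibility gives $\xi^a\in M$ with $h(\xi^a,m)=(f^a(m))^\ast$; setting $h^{bp}=(f^b(\xi^p))^\ast$ one gets $h_{cb}h^{bp}e_p=e_c$ directly, and the symmetry $(h^{ab})^\ast=h^{ba}$ drops out of $h(\xi^a,\xi^b)^\ast=h(\xi^b,\xi^a)$. Your identification of the $\ast$-conventions as the main bookkeeping hazard is accurate; once one fixes $M^\ast$ to be the conjugate-linear dual with left action $(a\cdot\varphi)(m)=a\varphi(m)$, every formula lines up without further choices.
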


\noindent
Since $\Omegaoneg$ is a left projective module, we can use the above
result to prove that there are no invertible hermitian forms on
$\Omegaoneg$.

\begin{proposition}\label{prop:no.invertible.forms}
  Every left hermitian form on $\Omegaoneg$ is degenerate.
\end{proposition}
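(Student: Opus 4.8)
The plan is to argue by contradiction, combining the structural fact that the components of any left hermitian form on $\Omegaoneg$ lie in the ideal $\KaN$ (Proposition~\ref{prop:hab.in.KaN}) with the characterization of invertible hermitian forms on projective modules (Proposition~\ref{prop:huab.equiv.projective}).

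First I would fix a vector space basis $\{dx_a\}_{a=1}^n$ of $\Omegaoneg$ and suppose, for contradiction, that some left hermitian form $h$ on $\Omegaoneg$ is invertible; write $h_{ab}=h(dx_a,dx_b)$. Since $\Omegaoneg$ is a projective left $\KN$-module, Proposition~\ref{prop:huab.equiv.projective} then supplies elements $h^{ab}\in\KN$ with $(h^{ab})^\ast=h^{ba}$ and
\[
  h_{cp}h^{pq}dx_q = dx_c
\]
for all $c=1,\dots,n$.

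The crux is to show that the left-hand side of this identity is necessarily zero. By Proposition~\ref{prop:hab.in.KaN} we have $h_{cp}\in\KaN$; since $\KaN$ is a two-sided ideal of $\KN$ (indeed $ba=\lambda b$ for $b\in\KaN$ and $a=\lambda\mid+\mu e+a^i\alpha_i$), the product $h_{cp}h^{pq}$ again lies in $\KaN$. On the other hand, by Proposition~\ref{prop:Omega1.bimodule.structure} (equivalently \eqref{eq:da.module.relations}) one has $\alpha_i\omega=0$ for every $\omega\in\Omegaoneg$, hence $b\omega=0$ for all $b\in\KaN$. Taking $b=h_{cp}h^{pq}$ and $\omega=dx_q$ gives $h_{cp}h^{pq}dx_q=0$, whence $dx_c=0$ — contradicting that $\{dx_a\}$ is a basis. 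Therefore no left hermitian form on $\Omegaoneg$ is invertible.

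I do not expect a genuine obstacle here: the argument is short, and the only points requiring care are the bookkeeping with the ideal relations of $\KN$ and the degenerate edge case $\Omegaoneg=0$ (where $\g=0$), which is either excluded or vacuous. One could equally well bypass Proposition~\ref{prop:huab.equiv.projective} and argue directly that $\hh(\omega)$ annihilates all of $\Omegaoneg$ for each $\omega$, since $h(\omega,\eta)\in\KaN$ and elements of $\KaN$ act trivially on $\Omegaoneg$ on the left; but invoking the cited proposition keeps the proof most economical.
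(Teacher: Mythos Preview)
Your argument is correct and essentially identical to the paper's own proof: both invoke Proposition~\ref{prop:huab.equiv.projective} on the projective module $\Omegaoneg$, then use $h_{ab}\in\KaN$ (Proposition~\ref{prop:hab.in.KaN}) together with $\KaN\cdot\Omegaoneg=0$ (Proposition~\ref{prop:Omega1.bimodule.structure}) to force $dx_c=0$, a contradiction. The only cosmetic difference is that the paper phrases the conclusion as the nonexistence of the $h^{ab}$ rather than as a proof by contradiction.
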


\begin{proof}
  Let $\{dx_a\}_{a=1}^n$ be a vector space basis of $\Omegaoneg$. (In
  particular, $\{dx_a\}_{a=1}^n$ is also a set of generators for
  $\Omegaoneg$.) Since $\Omegaoneg$ is projective,
  Proposition~\ref{prop:huab.equiv.projective} implies that a left
  hermitian form on $\Omegaoneg$ is invertible if and only if there
  exist $h^{ab}\in\KN$ such that
  \begin{align}\label{eq:hab.huab.dxa}
    h_{ab}h^{bc}dx_c = dx_a
  \end{align}
  for $a=1,\ldots,N$ where $h_{ab}=h(dx_a,dx_b)$. Since $\KaN$ is an
  ideal and $h_{ab}\in\KaN$ (by Proposition~\ref{prop:hab.in.KaN}), it
  follows that $h_{ab}h^{bc}\in\KaN$ for all $a,c=1,\ldots,N$. Since
  $adx_c=0$ for all $a\in\KaN$ (by
  Proposition~\ref{prop:Omega1.bimodule.structure}) if follows that
  the left hand side of \eqref{eq:hab.huab.dxa} is zero for any
  $h^{ab}\in\KN$, which contradicts the fact that $\{dx_a\}_{a=1}^n$
  is a basis of $\Omegaoneg$. Thus, we conclude that no such $h^{ab}$
  exist, implying that $h$ is degenerate.
\end{proof}

\noindent
In \cite{a:levi-civita.class.nms} it is shown that one can always
construct a class of connections that are compatible with an
invertible hermitian form on a projective module. However,
Proposition~\ref{prop:no.invertible.forms} clearly implies that one
can not make use of these results. As we shall see, connections
compatible with degenerate hermitian forms can exist as well.

Now, assume that $\{dx_a\}_{a=1}^n$ is a vector space basis of
$\Omegaoneg$. Proposition~\ref{prop:C.linear.gives.connection} implies
that one can define a connection by choosing linear maps
$\gamma_a^b:\g\to\complex$ for $a,b=1,\ldots,n$,
and setting
\begin{align*}
  \nabla_{\d}dx_a = \gamma^b_a(\d)dx_b,
\end{align*}
extending linearly to all of $\Omegaoneg$. Let us now consider the
construction of torsion free connections. By definition, a torsion free
connection satisfies
\begin{align*}
  d\omega(\d_1,\d_2) = \paraa{\nabla_{\d_1}\omega}(\d_2)
  -\paraa{\nabla_{\d_2}\omega}(\d_1)
\end{align*}
for $\omega\in\Omegaoneg$ and $\d_1,\d_2\in\g$. The implications of
this condition on the maps $\gamma_a^b:\g\to\complex$ depend on the
Lie algebra $\g$. Let us point out two cases below.

\begin{lemma}\label{lemma:christoffel.djk}
  Let $\nabla$ be a connection on $\Omegaoneg$ given by
  $\nabla_{\d}d\alpha_I=\gamma_I^J(\d)d\alpha_J$, where
  $\gamma_{I}^J:\g\to\complex$ are $\complex$-linear maps for
  $I,J=0,\ldots,N$.  If $\d_i^j\in\g$ for $i,j=1,\ldots,N$ then
  $\{d\alpha_i\}_{i=1}^N$ are linearly independent over $\complex$
  and, moreover, if $\nabla$ is a
  torsion free connection then $\gamma_I^j(\d_k^l)=0$ for $I=0,\ldots,N$ and $j,k,l=1,\ldots,N$.
\end{lemma}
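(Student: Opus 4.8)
The plan is to establish the two assertions in order, using the commutation relations from Proposition~\ref{prop:derivations} together with the torsion-free condition applied to the basis-generating $1$-forms $d\alpha_I$.

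First I would prove the linear independence of $\{d\alpha_i\}_{i=1}^N$. By Proposition~\ref{prop:connected.vspace.basis} it suffices to note that the differential calculus is connected when $\d_i^j\in\g$ --- but more directly, suppose $\lambda^i d\alpha_i = 0$ in $\Omegaoneg$ for some $\lambda^i\in\complex$. Evaluating against $\d_k^l$ and using $\d_k^l(\alpha_i)=\delta_i^l\alpha_k$ gives $\lambda^i\delta_i^l\alpha_k = \lambda^l\alpha_k = 0$ in $\KN$, hence $\lambda^l=0$ for every $l$ (since each $\alpha_k\neq 0$). Thus $\{d\alpha_i\}_{i=1}^N$ is linearly independent over $\complex$.

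Next I would extract the vanishing of the Christoffel components $\gamma_I^j(\d_k^l)$. The idea is to feed the torsion-free identity
\begin{align*}
  d\alpha_I(\d,\d') = (\nabla_{\d}d\alpha_I)(\d') - (\nabla_{\d'}d\alpha_I)(\d)
\end{align*}
suitable pairs of derivations $\d,\d'$ drawn from $\{\d_k^l\}$, and also mixed pairs involving the $\d_k$ if they lie in $\g$. The left-hand side $d\alpha_I(\d,\d')$ expands via the formula $d\omega(\d,\d') = \d\,\omega(\d')-\d'\,\omega(\d)-\omega([\d,\d'])$; since $d\alpha_I$ is itself closed when $\Omegag{k}=0$ for $k\geq 2$ (Proposition~\ref{prop:Omegak.zero}), actually $d(d\alpha_I)=0$, so the curvature-free structure forces relations among the $\gamma$'s. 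Concretely, evaluating $(\nabla_{\d_k^l}d\alpha_I)(\d_p^q) = \gamma_I^J(\d_k^l)\,d\alpha_J(\d_p^q)$ and using $d\alpha_j(\d_p^q) = \delta_j^q\alpha_p$ and $d\alpha_0(\d_p^q)=ide(\d_p^q)=0$, the torsion-free condition becomes an identity in $\KN$ relating $\gamma_I^j(\d_k^l)\alpha_p$ to $\gamma_I^j(\d_p^q)\alpha_k$ plus a term coming from $d\alpha_I([\d_k^l,\d_p^q])$, which by the commutator $[\d_k^l,\d_p^q]=\delta_p^l\d_k^q-\delta_k^q\d_p^l$ is again a $\complex$-combination of the $\gamma$'s. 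Choosing indices so that the commutator term drops out (e.g. $p\neq l$ and $k\neq q$) isolates $\gamma_I^j(\d_k^l)\alpha_p = \gamma_I^j(\d_p^q)\alpha_k$, and by varying $p,q$ independently of $k,l$ one forces each $\gamma_I^j(\d_k^l)=0$.

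The main obstacle I anticipate is bookkeeping: one must carefully choose which pairs of derivations to substitute so that the unwanted commutator contributions vanish, and then run a short descending argument to conclude that \emph{all} the coefficients $\gamma_I^j(\d_k^l)$ (as opposed to some linear combinations) are zero. The algebra itself is easy --- everything reduces to linear relations among $\complex$-scalars times basis vectors $\alpha_k$ --- but getting the index combinatorics right, especially handling the case $I=0$ versus $I=j\in\{1,\dots,N\}$ uniformly, requires care. One should also double-check the edge cases where $N=1$, in which the commutator $[\d_1^1,\d_1^1]=0$ trivially and the argument must instead use a mixed pair $\d_1^1,\d_1$ (available since $\d_1\in\g$ whenever $\d_1^1\in\g$, as $\d_1 = [\d_1^1,\d_1]$ lies in the Lie subalgebra generated by the $\d_k^l$... but here we only assume $\d_i^j\in\g$, so one must be slightly careful about whether $\d_1\in\g$; if not, the $N=1$ statement still follows because $\{d\alpha_1\}$ is a basis and torsion-freeness directly pins down $\gamma_I^1(\d_1^1)$ via $d\alpha_I(\d_1^1,\d_1^1)=0$).
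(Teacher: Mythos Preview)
Your approach is the same as the paper's: evaluate $d\alpha_i$ on $\d_k^l$ for linear independence, then feed pairs $(\d_j^k,\d_l^m)$ into the torsion-free condition and use the linear independence of the $\alpha_p$'s. Two corrections, though.

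First, you overcomplicate the commutator. You correctly note that $d(d\alpha_I)=0$ by Proposition~\ref{prop:Omegak.zero}; this means the term $d\omega(\d,\d')$ in \eqref{eq:def.torsion} is identically zero for $\omega=d\alpha_I$, so the torsion-free condition is simply
\[
  (\nabla_{\d_j^k}d\alpha_I)(\d_l^m)=(\nabla_{\d_l^m}d\alpha_I)(\d_j^k),
\]
with \emph{no} commutator contribution. There is nothing to ``drop out'' by choosing indices; the paper does not impose $p\neq l$ or $k\neq q$ for that reason. Second, your key identity has the wrong upper indices: evaluating $\gamma_I^J(\d_k^l)\,d\alpha_J(\d_p^q)$ picks out $J=q$, giving $\gamma_I^q(\d_k^l)\alpha_p$, not $\gamma_I^j(\d_k^l)\alpha_p$. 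The correct relation is
\[
  \gamma_I^m(\d_j^k)\,\alpha_l=\gamma_I^k(\d_l^m)\,\alpha_j,
\]
and taking $j\neq l$ immediately forces $\gamma_I^m(\d_j^k)=0$ for all $m$ (and by symmetry all remaining indices), exactly as in the paper.

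Your $N=1$ worry is legitimate, but your proposed fixes do not work: $\d_1$ need not lie in $\g$ under the stated hypothesis (it is not in the Lie algebra generated by $\d_1^1$ alone, since $[\d_1^1,\d_1^1]=0$), and $T_{d\alpha_I}(\d_1^1,\d_1^1)=0$ is vacuous by antisymmetry. The paper's argument (and indeed the statement) tacitly requires $N\geq 2$.
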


\begin{proof}
  Assume that $\d_j^k\in\g$ for $j,k=1,\ldots,N$. Let us first show that
  $\{d\alpha_i\}_{i=1}^N$ are linearly independent. To this end,
  assume that
  \begin{align*}
    &\lambda^id\alpha_i=0 \implies
    \lambda^id\alpha_i(\d_j^k) = 0\implies
      \lambda^i\d_j^k\alpha_i=0\implies\\
    &\lambda^i\delta^k_i\alpha_j = 0\implies
      \lambda^k\alpha_j=0
  \end{align*}
  for all $j,k=1,\ldots,N$, implying that $\lambda^k=0$ for
  $k=1,\ldots,N$. Hence, $\{d\alpha_i\}_{i=1}^N$ are linearly independent.

  Since $\nabla$ is a torsion free connection, one finds that
  \begin{align*}
    0 &=(\nabla_{\d_j^k}d\alpha_I)(\d_l^m)
        -(\nabla_{\d_l^m}d\alpha_I)(\d_j^k)
        = \gamma_I^J(\d_j^k)d\alpha_J(\d^l_m)
        -\gamma_I^J(\d_l^m)d\alpha_J(\d_j^k)\\
      &= \gamma_I^m(\d_j^k)\alpha_l
        -\gamma_I^k(\d_l^m)\alpha_j
  \end{align*}
  for all $I=0,1,\ldots,N$ and $j,k,l,m = 1,\ldots,N$. In particular,
  for $j\neq l$ this implies that $\gamma_I^m(\d_j^k) = 0$ for
  $I=0,1,\ldots,N$ and $j,k,m = 1,\ldots,N$.
\end{proof}

\noindent
Thus, for any Lie algebra $\g$ such that $\d_i^j\in\g$ for
$i,j=1,\ldots,N$, torsion free connections on $\Omegaoneg$ satisfy
\begin{align*}
  \nabla_{\d_i^j}d\alpha_I = \gamma_{I}^J(\d_i^j)d\alpha_J
  = \gamma_{I}^0(\d_i^j)d\alpha_0.
\end{align*}

\begin{lemma}\label{lemma:christoffel.dk}
  Let $\nabla$ connection on $\Omegaoneg$ given by
  $\nabla_{\d}d\alpha_I=\gamma_I^J(\d)d\alpha_J$, where
  $\gamma_{I}^J:\g\to\complex$ is a $\complex$-linear map for
  $I,J=0,\ldots,N$. If $\nabla$ is torsion free and $\d_i\in\g$ for
  $i=1,\ldots,N$, then $\gamma_I^0(\d_k)=0$ for $I=0,\ldots,N$ and
  $k=1,\ldots,N$.
\end{lemma}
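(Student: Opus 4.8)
The plan is to feed the forms $d\alpha_0,d\alpha_1,\ldots,d\alpha_N$ into the torsion free identity and to exploit the fact that each of these forms is closed. Indeed, since $d\circ d=0$ (equivalently, since $\Omegag{2}=0$ by Proposition~\ref{prop:Omegak.zero}) we have $d(d\alpha_I)=0$ for every $I$, so that the defining condition of a torsion free connection applied to $\omega=d\alpha_I$ collapses to
\begin{align*}
  0 = (\nabla_{\d}d\alpha_I)(\d')-(\nabla_{\d'}d\alpha_I)(\d)
    = \gamma_I^J(\d)\,d\alpha_J(\d') - \gamma_I^J(\d')\,d\alpha_J(\d)
\end{align*}
for all $\d,\d'\in\g$. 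First I would record this identity and then specialize $\d=\d_j$, $\d'=\d_k$.

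The key computational ingredient is the evaluation of $d\alpha_J$ on the derivations $\d_k$. From Proposition~\ref{prop:derivations} one has $\d_k(e)=i\alpha_k$ and $\d_k(\alpha_l)=0$, hence $d\alpha_0(\d_k)=i\,de(\d_k)=i\,\d_k(e)=-\alpha_k$ while $d\alpha_l(\d_k)=\d_k(\alpha_l)=0$ for $l=1,\ldots,N$. Substituting $\d=\d_j$ and $\d'=\d_k$ into the displayed identity, only the $J=0$ term survives, and it reduces to $\gamma_I^0(\d_j)\alpha_k=\gamma_I^0(\d_k)\alpha_j$ for all $I=0,\ldots,N$ and $j,k=1,\ldots,N$.

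Finally, since $\{\alpha_1,\ldots,\alpha_N\}$ is part of a vector space basis of $\KN$, the elements $\alpha_j$ and $\alpha_k$ are linearly independent whenever $j\neq k$; choosing such a pair then forces $\gamma_I^0(\d_j)=0$, and letting $j$ range over $1,\ldots,N$ gives the claim. I do not anticipate a genuine obstacle: the whole argument is a direct unwinding of the torsion free equation together with the explicit action of the $\d_k$ from Proposition~\ref{prop:derivations}. The only point worth a brief remark is that one needs an index $k\neq j$ available in order to conclude for a fixed $j$ -- exactly the same mechanism already used in the proof of Lemma~\ref{lemma:christoffel.djk}.
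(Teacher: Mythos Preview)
Your proposal is correct and follows essentially the same route as the paper: both specialize the torsion free identity to the pair $(\d_j,\d_k)$, observe that only the $J=0$ term survives because $\d_k(\alpha_l)=0$, and then use the linear independence of $\alpha_j$ and $\alpha_k$ for $j\neq k$ to conclude. Your explicit remark that $d(d\alpha_I)=0$ and your acknowledgment that one needs an index $k\neq j$ available are exactly the points the paper uses implicitly.
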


\begin{proof}
  Note that if $\d_k\in\g$ then $de\neq 0$ since
  $de(\d_k)=i\alpha_k\neq 0$. Assuming that $\nabla$ is torsion free,
  one obtains (for $k\neq j$)
  \begin{align*}
    0 &= \paraa{\nabla_{\d_k}d\alpha_I}(\d_j)-\paraa{\nabla_{\d_j}d\alpha_I}(\d_k)
        = \gamma_I^J(\d_k)d\alpha_J(\d_j)
        -\gamma_I^J(\d_j)d\alpha_J(\d_k)\\
      &= i\gamma_I^0(\d_k)\d_je - i\gamma_I^0(\d_j)\d_ke
        = -\gamma_I^0(\d_k)\alpha_j + \gamma_I^0(\d_j)\alpha_k
  \end{align*}
  which implies that $\gamma_I^0(\d_k)=0$ for $k=1,\ldots,N$ and
  $I=0,\ldots,N$.
\end{proof}

\noindent
In this case, if $\d_i\in\g$ for $i=1,\ldots,N$ then any torsion free
connection on $\Omegaoneg$ satisfy
\begin{align*}
  \nabla_{\d_i}d\alpha_I = \gamma_I^J(\d_i)d\alpha_J
  =\gamma_I^k(\d_i)d\alpha_k.
\end{align*}
Clearly, in the case when $\g=\Der(\KN)$,
Lemma~\ref{lemma:christoffel.djk} and Lemma~\ref{lemma:christoffel.dk}
apply, giving the following result, showing that there is a unique torsion free
connection on $\Omega^1_{\Der}$.

\begin{proposition}\label{prop:torsion.free.on.OmegaDer}
  If $\nabla$ is a torsion free connection on $\Omega_{\Der}^1$
  then $\nabla_{\d}\omega=0$ for all $\d\in\Der(\KN)$ and
  $\omega\in\Omega_{\Der}^1$.
\end{proposition}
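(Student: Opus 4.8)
The plan is to reduce the claim to showing that a small set of ``Christoffel coefficients'' vanish, dispatch most of them with the two preceding lemmas, and finish by inserting one derivation of each type into the torsion identity.

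Since $\g=\Der(\KN)$, we already know that $\{d\alpha_I\}_{I=0}^N$ is a vector space basis of $\Omega_{\Der}^1$, so by Proposition~\ref{prop:C.linear.gives.connection} every connection on $\Omega_{\Der}^1$ has the form $\nabla_\d d\alpha_I=\gamma_I^J(\d)\,d\alpha_J$ for $\complex$-linear maps $\gamma_I^J:\Der(\KN)\to\complex$, and it suffices to prove $\gamma_I^J\equiv 0$. By Proposition~\ref{prop:derivations} the derivations $\{\d_k\}_{k=1}^N\cup\{\d_k^l\}_{k,l=1}^N$ form a basis of $\Der(\KN)$, so it is enough to show $\gamma_I^J(\d_k)=0$ and $\gamma_I^J(\d_k^l)=0$ for all admissible indices. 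Here Lemma~\ref{lemma:christoffel.djk} (applicable since each $\d_k^l\in\g$) gives $\gamma_I^j(\d_k^l)=0$ for $j\geq 1$, so $\nabla_{\d_k^l}d\alpha_I=\gamma_I^0(\d_k^l)\,d\alpha_0$, and Lemma~\ref{lemma:christoffel.dk} (applicable since each $\d_k\in\g$) gives $\gamma_I^0(\d_k)=0$, so $\nabla_{\d_k}d\alpha_I=\gamma_I^j(\d_k)\,d\alpha_j$ with the sum over $j\geq 1$. Thus only the coefficients $\gamma_I^0(\d_k^l)$ and $\gamma_I^j(\d_k)$ remain to be killed.

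The key step would be to apply torsion-freeness to a mixed pair $(\d_i^j,\d_k)$. Since $\Omega_{\Der}^2=0$ by Proposition~\ref{prop:Omegak.zero}, we have $d(d\alpha_I)=0$, so torsion-freeness reduces to $(\nabla_{\d_i^j}d\alpha_I)(\d_k)=(\nabla_{\d_k}d\alpha_I)(\d_i^j)$. Using the two reductions above together with $d\alpha_0(\d_k)=i\,\d_k(e)=-\alpha_k$ and $d\alpha_l(\d_i^j)=\d_i^j(\alpha_l)=\delta_l^j\alpha_i$, the left side equals $-\gamma_I^0(\d_i^j)\alpha_k$ and the right side equals $\gamma_I^j(\d_k)\alpha_i$, so $\gamma_I^j(\d_k)\alpha_i+\gamma_I^0(\d_i^j)\alpha_k=0$. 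Choosing $i\neq k$ makes $\alpha_i$ and $\alpha_k$ linearly independent in $\KN$, forcing $\gamma_I^0(\d_i^j)=0$ and $\gamma_I^j(\d_k)=0$; letting the remaining free indices range over all admissible values (a distinct partner index always being available) eliminates every such coefficient.

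At this point all $\gamma_I^J$ vanish on the basis $\{\d_k\}\cup\{\d_k^l\}$ of $\Der(\KN)$, hence vanish identically, so $\nabla_\d d\alpha_I=0$ for all $\d$ and $I$, and by $\complex$-linearity $\nabla_\d\omega=0$ for every $\omega\in\Omega_{\Der}^1$. The only real obstacle is conceptual rather than computational: the two lemmas separately annihilate the $d\alpha_0$-component along the $\d_k^l$ and the $d\alpha_j$-components along the $\d_k$, but neither reaches the cross terms; it is the single torsion identity evaluated on one derivation of each type, combined with the linear independence of the distinct generators $\alpha_i$, that couples the two families and disposes of what is left.
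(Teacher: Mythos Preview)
Your proof is correct and follows essentially the same route as the paper: first invoke the two lemmas to kill $\gamma_I^j(\d_k^l)$ and $\gamma_I^0(\d_k)$, then evaluate the torsion-free condition on a mixed pair $(\d_i^j,\d_k)$ to obtain $\gamma_I^j(\d_k)\alpha_i+\gamma_I^0(\d_i^j)\alpha_k=0$, and conclude by linear independence of the $\alpha$'s for distinct indices. The only cosmetic differences are that you explicitly cite $\Omega_{\Der}^2=0$ to justify $d(d\alpha_I)=0$ and that you simplify $\nabla$ via the lemmas before evaluating rather than computing directly, but the resulting identity and its resolution are identical to the paper's.
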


\begin{proof}
  Let $\nabla$ be a a torsion free connection on $\Omega_{\Der}^1$ and write
  \begin{align*}
    \nabla_{\d}d\alpha_I = \gamma_I^J(\d)d\alpha_J
  \end{align*}
  where $\gamma_I^J:\g\to\complex$ is a $\complex$-linear map. Since
  $\nabla$ is torsion free and $\d_j^k,\d_l\in\g$ for
  $j,k,l=1,\ldots,N$, one can use Lemma~\ref{lemma:christoffel.djk}
  and Lemma~\ref{lemma:christoffel.dk} to conclude that
  $\gamma_I^0(\d_k)=\gamma_I^j(\d_k^l)=0$ for $j,k,l=1,\ldots,N$ and
  $I=0,\ldots,N$.

  Moreover, since $\nabla$ is torsion free, one finds that
  \begin{align*}
    0 &= \paraa{\nabla_{\d_j}d\alpha_I}(\d_k^l)-\paraa{\nabla_{\d_k^l}d\alpha_I}(\d_j)
        =\gamma_I^J(\d_j)d\alpha_J(\d_k^l)
        -\gamma_I^J(\d_k^l)d\alpha_J(\d_j)\\
      &= \gamma_I^m(\d_j)\delta_m^l\alpha_k-i\gamma_I^0(\d_k^l)\d_je
        =\gamma_I^l(\d_j)\alpha_k+\gamma_I^0(\d_k^l)\alpha_j,
  \end{align*}
  implying that $\gamma_I^k(\d_j)=\gamma_I^0(\d_j^k)=0$ for
  $I=0,\ldots,N$ and $j,k=1,\ldots,N$. Together with
  $\gamma_I^0(\d_k)=\gamma_I^j(\d_k^l)=0$, one concludes that
  $\gamma_I^J(\d_i)=\gamma_I^J(\d_i^j)=0$ for $i,j=1,\ldots,N$ and
  $I,J=0,\ldots,N$. Since $\{\d_i,\d_j^k\}$ is a basis of $\Der(\KN)$
  and $\{d\alpha_I\}_{I=0}^N$ is a basis of $\Omega_{\Der}^1$, one concludes
  that $\nabla_{\d}\omega=0$ for all $\d\in\g$ and
  $\omega\in\Omega_{\Der}^1$.
\end{proof}

\section{Levi-Civita connections on $\Omegaoneg$}
\label{sec:LC.Omegaoneg}

Given a left hermitian form $h$ on $\Omegaoneg$, a (left) Levi-Civita connection
is a connection on $\Omegaoneg$ such that
\begin{align*}
  &\d h(\omega,\eta) = h\paraa{\nabla_{\d}\omega,\eta} + h(\omega,\nabla_{\d^\ast}\eta)\\
  &(\nabla_{\d}\omega)(\d')-(\nabla_{\d'}\omega)(\d)-d\omega(\d,\d') = 0
\end{align*}
for $\d,\d'\in\g$ and $\omega,\eta\in\Omegaoneg$. If $\g=\Der(\KN)$
then Proposition~\ref{prop:torsion.free.on.OmegaDer} implies that a
torsion free connection on $\Omega^1_{\Der}$ is compatible with a
hermitian form $h$ if and only if $\d h(\omega,\eta)=0$ for all
$\d\in\Der(\KN)$ and $\omega,\eta\in\Omega^1_{\Der}$, giving
$h(\alpha_I,\alpha_J)=\lambda_{IJ}\mid$ for some
$\lambda_{IJ}\in\complex$. However, Proposition~\ref{prop:hab.in.KaN}
then implies that $\lambda_{IJ}=0$ since $h(d\alpha_I,d\alpha_J)$ is
necessarily in $\KaN$. Hence, it is only for the trivial hermitian
form, i.e $h(\omega,\eta)=0$ for all $\omega,\eta\in\Omega^1_{\Der}$,
that a Levi-Civita connection exists on $\Omega^1_{\Der}$. Thus, to
get more interesting examples of Levi-Civita connections one needs to
choose a different Lie algebra of derivations. In the following, we
will present a few examples where Levi-Civita connections exist for
non-trivial choices of hermitian form.

\subsection{Levi-Civita connections for inner derivations}
Let us now consider the case when $\g=\ginn$ is the Lie algebra of inner
derivations spanned by $\dh=\d_1^1+\cdots+\d_N^N$ and $\d_i$ for $i=1,\ldots,N$, satisfying
\begin{align*}
  [\d_i,\d_j]=0\qand
  [\dh,\d_i] = \d_i
\end{align*}
for $i,j=1,\ldots,N$, and
\begin{align*}
  \d_k(a) = -i[\alpha_k,a]\qand
  \dh(a) = [e,a]
\end{align*}
for all $a\in\KN$, as well as $\dh\alpha_i=\alpha_i$ for
$i=1,\ldots,N$. The calculus is connected since for
$a=\lambda\mid + \mu e + a^i\alpha$, $da=0$ implies that
\begin{align*}
  &da(\d_i) = 0\implies \d_ia = 0 \implies \mu\alpha_i = 0\implies\mu = 0\\
  &da(\dh) = 0 \implies \dh a = 0\implies a^i\alpha_i = 0 \implies a^i = 0
\end{align*}
giving $a=\lambda\mid$. Thus, by
Proposition~\ref{prop:connected.vspace.basis}, we know that
$\{d\alpha_I\}_{I=0}^N$ is a vector space basis of $\Omegaoneg$.
An arbitrary connection on $\Omegaoneg$ is given by
\begin{align}\label{eq:conn.inner.gamma}
  \nabla_{\d}d\alpha_I = \gamma_I^J(\d)d\alpha_J
\end{align}
where $\gamma_I^J:\g\to\complex$ is a $\complex$-linear map. For inner
derivations, there is a canonical connection given by
\begin{align*}
  \nabla_{[a,\cdot]}\omega = [a,\omega]
\end{align*}
corresponding to $\gamma_I^J(\d_i)=0$ and $\gamma_I^J(\dh)=\delta_I^J$
in \eqref{eq:conn.inner.gamma}. However, this connection is not
torsion free since
\begin{align*}
  \paraa{\nabla_{[a,\cdot]}d\alpha_I}([b,\cdot]) -\paraa{\nabla_{[b,\cdot]}d\alpha_I}([a,\cdot])
  &= [a,\alpha_I]([b,\cdot])-[b,\alpha_I]([a,\cdot])\\
  &= [b,[a,\alpha_I]]-[a,[b,\alpha_I]]
    =[\alpha_I,[a,b]]
\end{align*}
giving, for $I=0$, $a=e$ and $b=\alpha_k$,
\begin{align*}
  \paraa{\nabla_{[e,\cdot]}de}([\alpha_k,\cdot]) -\paraa{\nabla_{[\alpha_k,\cdot]}de}([e,\cdot])
  =[e,[e,\alpha_k]] = [e,\alpha_k] = \alpha_k \neq 0.
\end{align*}

\begin{proposition}\label{prop:inner.der.torsion.free}
  $\nabla$ is a torsion free connection on $\Omegaoneg$ if and only if
  there exist $\gamma_I^J\in\complex$, for $I,J=0,\ldots,N$ such that
  \begin{align}
    \nabla_{\dh}d\alpha_I &= \gamma_I^Jd\alpha_J\label{eq:inner.nabla.torsion.two}\\
    \nabla_{\d_k}d\alpha_I &= -\gamma_I^0d\alpha_k\label{eq:inner.nabla.torsion.one}
  \end{align}
  for $k=1,\ldots,N$ and $I=0,\ldots,N$.
\end{proposition}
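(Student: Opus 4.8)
The plan is to turn the torsion-free condition into a small linear system for the scalars $\gamma_I^J(\d)$. First I would observe that, since $d\alpha_I\in\Omegag{1}$ and $\Omegag{2}=0$ by Proposition~\ref{prop:Omegak.zero}, one has $d(d\alpha_I)=0$, so the torsion \eqref{eq:def.torsion} of the connection $\nabla$ on the generators reads
\begin{align*}
  T_{d\alpha_I}(\d,\d') = \paraa{\nabla_{\d}d\alpha_I}(\d') - \paraa{\nabla_{\d'}d\alpha_I}(\d).
\end{align*}
As $T_{d\alpha_I}$ is $\complex$-bilinear in $(\d,\d')$ and every $\omega\in\Omegaoneg$ is a $\complex$-combination of the $d\alpha_I$, the connection is torsion free precisely when $T_{d\alpha_I}(\d,\d')=0$ for all $I$ and all $\d,\d'$ in the basis $\{\dh,\d_1,\ldots,\d_N\}$ of $\ginn$. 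I would then record, from $\d_k(e)=i\alpha_k$, $\d_k(\alpha_j)=0$ and $\dh(\alpha_j)=\alpha_j$, the pairings $d\alpha_0(\dh)=0$, $d\alpha_0(\d_k)=-\alpha_k$, $d\alpha_j(\dh)=\alpha_j$, $d\alpha_j(\d_k)=0$ for $j,k=1,\ldots,N$.

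Next I would impose $T_{d\alpha_I}=0$ on the two nontrivial families of basis pairs. Substituting $\nabla_{\d}d\alpha_I=\gamma_I^J(\d)d\alpha_J$ and the pairings above, the pair $(\dh,\d_k)$ gives
\begin{align*}
  0 = -\gamma_I^0(\dh)\alpha_k-\gamma_I^j(\d_k)\alpha_j,
\end{align*}
so, using that $\alpha_1,\ldots,\alpha_N$ are linearly independent, $\gamma_I^j(\d_k)=-\gamma_I^0(\dh)\delta_k^j$; and the pair $(\d_k,\d_l)$ gives $\gamma_I^0(\d_k)\alpha_l=\gamma_I^0(\d_l)\alpha_k$, whence $\gamma_I^0(\d_k)=0$ for all $k$ (this step uses $N\geq 2$, which is harmless since the quiver has at least two arrows). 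Setting $\gamma_I^J:=\gamma_I^J(\dh)$, these are exactly \eqref{eq:inner.nabla.torsion.two}--\eqref{eq:inner.nabla.torsion.one}; in particular $\nabla_{\d_k}d\alpha_I$ has no $d\alpha_0$-component and no $d\alpha_j$-component for $j\neq k$.

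For the converse I would take $\nabla$ defined by \eqref{eq:inner.nabla.torsion.two}--\eqref{eq:inner.nabla.torsion.one} and check $T_{d\alpha_I}=0$ on all basis pairs: $(\dh,\dh)$ and $(\d_k,\d_k)$ are trivial, $(\d_k,\d_l)$ reduces to $-\gamma_I^0 d\alpha_k(\d_l)+\gamma_I^0 d\alpha_l(\d_k)=0$ since $d\alpha_k(\d_l)=0$, and $(\dh,\d_k)$ reduces to $\gamma_I^0 d\alpha_0(\d_k)+\gamma_I^0 d\alpha_k(\dh)=-\gamma_I^0\alpha_k+\gamma_I^0\alpha_k=0$. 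This direction is entirely routine. I do not expect a serious obstacle here: the only point requiring a little care is the bookkeeping with the index $j$ in the $(\dh,\d_k)$ equation, together with the observation that $\gamma_I^0(\d_k)$ is pinned down only by the mixed pair $(\d_k,\d_l)$ with $l\neq k$, so the linear-independence argument must be arranged to extract both conclusions.
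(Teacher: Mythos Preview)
Your proposal is correct and follows essentially the same approach as the paper's proof: write $\nabla_\d d\alpha_I=\gamma_I^J(\d)d\alpha_J$, evaluate the torsion on the basis pairs $(\d_k,\d_l)$ and $(\dh,\d_k)$ using the pairings $d\alpha_J(\d)$, and read off the constraints on $\gamma_I^J$ from the linear independence of the $\alpha_j$. Your version is slightly more explicit in recording the pairings, in noting that the $(\d_k,\d_l)$ step uses $N\geq 2$, and in verifying the converse direction, but the argument is the same.
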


\begin{proof}
  Let $\nabla$ be a a torsion free connection on $\Omegaoneg$ and write
  \begin{align*}
    \nabla_{\d}d\alpha_I = \gamma_I^J(\d)d\alpha_J
  \end{align*}
  where $\gamma_I^J:\g\to\complex$ is a $\complex$-linear map. Since
  $\nabla$ is torsion free, one obtains (for $k\neq j$)
  \begin{align*}
    0 &= \paraa{\nabla_{\d_k}d\alpha_I}(\d_j)-\paraa{\nabla_{\d_j}d\alpha_I}(\d_k)
        = \gamma_I^J(\d_k)d\alpha_J(\d_j)
        -\gamma_I^J(\d_j)d\alpha_J(\d_k)\\
      &= i\gamma_I^0(\d_k)\d_je - i\gamma_I^0(\d_j)\d_ke
        = -\gamma_I^0(\d_k)\alpha_j + \gamma_I^0(\d_j)\alpha_k
  \end{align*}
  which implies that $\gamma_I^0(\d_k)=0$ for $k=1,\ldots,N$ and
  $I=0,\ldots,N$. Furthermore, one finds that
  \begin{align*}
    0 &= \paraa{\nabla_{\d_k}d\alpha_I}(\dh)-\paraa{\nabla_{\dh}d\alpha_I}(\d_k)
        =\gamma_I^J(\d_k)d\alpha_J(\dh)
        -\gamma_I^J(\dh)d\alpha_J(\d_k)\\
      &= \gamma_I^j(\d_k)\alpha_j
        -i\gamma_I^0(\dh)de(\d_k)
       = \gamma_I^j(\d_k)\alpha_j
        +\gamma_I^0(\dh)\alpha_k
  \end{align*}
  giving $\gamma_I^j(\d_k)=0$ for $j\neq k$ and
  $\gamma_I^k(\d_k)=-\gamma_I^0(\dh)$ for $I=0,\ldots,N$ and
  $j,k=1,\ldots,N$. Thus, one gets
  \begin{align*}
    &\nabla_{\dh}d\alpha_I = \gamma_I^J(\dh)d\alpha_J\\
    &\nabla_{\d_k}d\alpha_I = \gamma_I^J(\d_k)d\alpha_J
      =\gamma_I^k(\d_k)d\alpha_k
      =-\gamma_I^0(\dh)d\alpha_k
  \end{align*}
  giving \eqref{eq:inner.nabla.torsion.one} and
  \eqref{eq:inner.nabla.torsion.two} with
  $\gamma_I^J=\gamma_I^J(\dh)$. Conversely, it is straightforward to
  check that the connection defined by
  \eqref{eq:inner.nabla.torsion.one} and
  \eqref{eq:inner.nabla.torsion.two} is torsion free.
\end{proof}

\noindent
Since $\d_k^\ast=\d_k$, $\dh^\ast=\dh$ and
$(d\alpha_I)^\ast=d\alpha_I$, it is easy to check that the torsion
free connections in Proposition~\ref{prop:inner.der.torsion.free} are
$\ast$-connections if $\gamma_I^J\in\reals$.

Let us now construct a torsion free $\ast$-connection compatible with
the left hermitian form of the type
\begin{align*}
  \paraa{h_{IJ}} = \paraa{h(d\alpha_I.d\alpha_J)} =
  \begin{pmatrix}
    \rho_0 & -i\rho_1 & \cdots & -i\rho_N\\
    i\rho_1 & \\
    \vdots & & 0 & \\
    i\rho_N
  \end{pmatrix}h_0
\end{align*}
for $\rho_I\in\reals$ for $I=0,\ldots,N$ and $h_0\in\KaN$ with
$h_0^\ast=h_0$. Note that $h_{IJ}$ defines a left hermitian form via
\begin{align*}
  h(\omega^Id\alpha_I,\eta^J d\alpha_J) =
  \omega^Ih_{IJ}\bar{\eta}^J
\end{align*}
by Proposition~\ref{prop:hab.in.KaN}. Let us now choose
$\gamma^I_J=\tfrac{1}{2}\delta_J^I$ in
Proposition~\ref{prop:inner.der.torsion.free}, giving a
torsion free $\ast$-connection on $\Omegaoneg$ as
\begin{align*}
  \nabla_{\dh}d\alpha_I = \tfrac{1}{2}d\alpha_I\qquad
  \nabla_{\d_k}d\alpha_0 = -\tfrac{1}{2}d\alpha_k\qquad
  \nabla_{\d_k}d\alpha_l = 0
\end{align*}
for $I=0,\ldots,N$ and $k,l=1,\ldots,N$. Let us now check metric compatibility of $\nabla$:
\begin{align*}
  \dh h(d\alpha_I,d\alpha_J) - h(\nabla_{\dh}d\alpha_I,d\alpha_J)-h(d\alpha_I,\nabla_{\dh}d\alpha_J)
  = \dh h_{IJ} -\tfrac{1}{2}h_{IJ}-\tfrac{1}{2}h_{IJ} = 0
\end{align*}
since $\dh h_{IJ} = h_{IJ}$, and furthermore
\begin{align*}
  \d_k h(d\alpha_I,d\alpha_J) - h(\nabla_{\d_k}d\alpha_I,d\alpha_J)-h(d\alpha_I,\nabla_{\d_k}d\alpha_J)
  = \tfrac{1}{2}\delta_I^0h_{kJ} + \tfrac{1}{2}\delta_J^0h_{Ik}
\end{align*}
If $I,J\neq 0$ then the above expression clearly vanishes, and if $I=J=0$ then one has
\begin{align*}
  \tfrac{1}{2}\delta_I^0h_{kJ} + \tfrac{1}{2}\delta_J^0h_{Ik}
  =\tfrac{1}{2}h_{k0} + \tfrac{1}{2}h_{0k}
  =\tfrac{1}{2}i\rho_k -\tfrac{1}{2}i\rho_k = 0. 
\end{align*}
If $I=0$ and $J=j\neq 0$ (and similarly for $J=0$ and $I\neq 0$) then 
\begin{align*}
  \tfrac{1}{2}\delta_I^0h_{kJ} + \tfrac{1}{2}\delta_J^0h_{Ik}
  = \tfrac{1}{2}h_{kj} = 0
\end{align*}
since $h_{kj}=0$. Hence, $\nabla$ is a $\ast$-bimodule Levi-Civita
connection compatible with $h$.

\subsection{Levi-Civita connections for a set of outer derivations}

In this section, let
\begin{align*}
  \g = \complex\angles{\dt_i=\d_i+\d_i^i:\,\, i=1,\ldots,N}
\end{align*}
giving $\dt_i^\ast=\dt_i$, $[\dt_i,\dt_j]=0$ and
\begin{align*}
  \dt_ke = i\alpha_k\qand\dt_i\alpha_k = \delta_{ik}\alpha_i.
\end{align*}
Note that $\dt_i$ is an outer derivation for $i=1,\ldots,N$, but
\begin{align*}
  \dt_1+\cdots+\dt_N = \d_1+\cdots+\d_N+\d_1^1+\cdots+\d_N^N
\end{align*}
is a sum of inner derivations, and therefore an inner derivation.

\begin{proposition}
  A vector space basis for $\Omegaoneg$ is given by
  $\{d\alpha_i\}_{i=1}^N$ and it holds that
  \begin{align*}
    d\alpha_0 + d\alpha_1+\cdots+d\alpha_N=0
  \end{align*}
  It follows that $\Omegaoneg$ is not connected.
\end{proposition}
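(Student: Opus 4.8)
The plan is to show that the relation $d\alpha_0 + d\alpha_1 + \cdots + d\alpha_N = 0$ holds and then argue that the remaining $\{d\alpha_i\}_{i=1}^N$ are linearly independent, using Proposition~\ref{prop:connected.vspace.basis} to conclude failure of connectedness.

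First I would verify the relation by evaluating the left-hand side on the derivations $\dt_k$, which span $\g$. Since $d\alpha_0 = ide$, one has $d\alpha_0(\dt_k) = i\dt_k(e) = i(i\alpha_k) = -\alpha_k$, while $d\alpha_i(\dt_k) = \dt_k(\alpha_i) = \delta_{ik}\alpha_i$. Summing, $\big(d\alpha_0 + d\alpha_1 + \cdots + d\alpha_N\big)(\dt_k) = -\alpha_k + \alpha_k = 0$ for every $k$, and since an element of $\Omegaoneg$ is determined by its values on a spanning set of $\g$, the relation $d\alpha_0 + d\alpha_1 + \cdots + d\alpha_N = 0$ follows. (Alternatively, this is just $d(e + \alpha_1 + \cdots + \alpha_N)/i$ applied appropriately, but the direct evaluation is cleanest.) In particular $d\alpha_0 = -(d\alpha_1 + \cdots + d\alpha_N)$, so $\Omegaoneg$ is spanned by $\{d\alpha_i\}_{i=1}^N$.

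Next I would check that $\{d\alpha_i\}_{i=1}^N$ is actually linearly independent, hence a basis. Suppose $\lambda^i d\alpha_i = 0$ with $\lambda^i\in\complex$. Evaluating on $\dt_k$ gives $\lambda^i\dt_k(\alpha_i) = \lambda^i\delta_{ik}\alpha_i = \lambda^k\alpha_k = 0$, so $\lambda^k = 0$ for each $k=1,\ldots,N$. Thus $\dim_\complex\Omegaoneg = N$ and $\{d\alpha_i\}_{i=1}^N$ is a vector space basis.

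Finally, the failure of connectedness follows from Proposition~\ref{prop:connected.vspace.basis}, which states that $\Omegaoneg$ is connected if and only if $d\alpha_0, d\alpha_1, \ldots, d\alpha_N$ is a vector space basis of $\Omegaoneg$. Since we have exhibited the nontrivial relation $d\alpha_0 + d\alpha_1 + \cdots + d\alpha_N = 0$, these $N+1$ elements are linearly dependent and so cannot form a basis; hence $\Omegaoneg$ is not connected. Concretely, the element $a = -ie + i(\alpha_1 + \cdots + \alpha_N)$ satisfies $da = -i\,d\alpha_0 + i(d\alpha_1 + \cdots + d\alpha_N)\cdot$... — more simply, $d(e - i\alpha_1 - \cdots)$; in any case one can write down an explicit $a \notin \complex\mid$ with $da = 0$, but invoking Proposition~\ref{prop:connected.vspace.basis} already suffices. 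I do not anticipate a genuine obstacle here; the only thing to be careful about is organizing the two evaluations (to get the relation and to get independence of the remaining forms) cleanly, and making sure the appeal to Proposition~\ref{prop:connected.vspace.basis} is stated in the contrapositive direction.
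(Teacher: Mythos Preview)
Your proposal is correct and follows essentially the same approach as the paper: evaluate on the spanning set $\{\dt_k\}$ to verify the relation $d\alpha_0+\cdots+d\alpha_N=0$, then evaluate again to show $\{d\alpha_i\}_{i=1}^N$ is linearly independent. The only minor difference is in the last step: the paper directly exhibits the element $ie+\alpha_1+\cdots+\alpha_N$ (which is not in $\complex\mid$ yet has vanishing differential) to conclude non-connectedness, whereas you invoke Proposition~\ref{prop:connected.vspace.basis} in contrapositive form; both are fine, and your garbled parenthetical attempt at the explicit element can simply be deleted since the appeal to that proposition already suffices.
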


\begin{proof}
  One computes
  \begin{align*}
    -d\alpha_0(\dt_k) = -i(\d_k+\d^k_k)(e) = -i\d_ke = -i(i\alpha_k)=\alpha_k
  \end{align*}
  and
  \begin{align*}
    (d\alpha_1+\cdots+d\alpha_N)(\dt_k)
    = \d_k^k(\alpha_1)+\cdots+\d_k^k(\alpha_N)
    = \d_k^k\alpha_k = \alpha_k, 
  \end{align*}
  showing that $d\alpha_0+d\alpha_1+\cdots+d\alpha_k = 0$. It follows
  immediately that
  \begin{align*}
    d(\alpha_1+\cdots+\alpha_N+ie) = 0 
  \end{align*}
  which shows that $\Omegaoneg$ is not connected. Moreover, it follows
  that $\{d\alpha_k\}_{k=1}^N$ generates $\Omegaoneg$. Let us now show
  that $\{d\alpha_k\}_{k=1}^N$ are linearly independent over
  $\complex$. Thus, assume that $\lambda^kd\alpha_k=0$ for $\lambda^k\in\complex$, which implies
  that
  \begin{align*}
    0=\lambda^kd\alpha_k(\dt_l) = \lambda^k\d_l^l\alpha_k
    =\lambda^l\alpha_l\quad\text{(no sum over $l$)} 
  \end{align*}
  giving $\lambda^l=0$ for $l=1,\ldots,N$.
\end{proof}

\noindent
Thus, it follows
(cf. Proposition~\ref{prop:C.linear.gives.connection}) that an
arbitrary connection on $\Omegaoneg$ is determined by
\begin{align*}
  \nabla_{\dt_i}d\alpha_j = \gamma_{ij}^kd\alpha_k
\end{align*}
with $\gamma_{ij}^k\in\complex$.

\begin{proposition}\label{prop:connection.dti.torsion.free}
  A connection $\nabla:\g\times\Omegaoneg\to\Omegaoneg$ is torsion
  free if and only if there exists $\gamma_{ij}\in\complex$ such that
  \begin{align}\label{eq:dt.torsion.free.conn}
    \nabla_{\dt_i}d\alpha_j = \gamma_{ij}d\alpha_i
  \end{align}
  for $i,j=1,\ldots,N$.
\end{proposition}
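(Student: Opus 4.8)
The plan is to reduce torsion-freeness to a small linear-algebra statement about the Christoffel symbols, exploiting that the calculus has no $2$-forms. First I would record two elementary facts. Since $\{d\alpha_k\}_{k=1}^N$ is a basis of $\Omegaoneg$ (by the preceding proposition) and, by Proposition~\ref{prop:C.linear.gives.connection}, every $\complex$-bilinear map $\g\times\Omegaoneg\to\Omegaoneg$ is a connection, an arbitrary connection has the form $\nabla_{\dt_i}d\alpha_j=\gamma_{ij}^k d\alpha_k$ with $\gamma_{ij}^k\in\complex$. Moreover, evaluating a generator against a basis derivation gives $d\alpha_k(\dt_l)=\dt_l\alpha_k=\delta_{kl}\alpha_l$ for $k,l=1,\ldots,N$.

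Next, by Proposition~\ref{prop:Omegak.zero} one has $\Omegag{2}=0$, so $d\omega=0$ for every $\omega\in\Omegaoneg$, and the torsion reduces to $T_\omega(\dt_i,\dt_j)=(\nabla_{\dt_i}\omega)(\dt_j)-(\nabla_{\dt_j}\omega)(\dt_i)$. Since $T$ is $\complex$-linear in $\omega$, it suffices to require this to vanish for $\omega=d\alpha_m$, $m=1,\ldots,N$. Using the formulas above, $(\nabla_{\dt_i}d\alpha_m)(\dt_j)=\gamma_{im}^k\delta_{kj}\alpha_j=\gamma_{im}^j\alpha_j$ (no sum over $j$), so torsion-freeness is equivalent to $\gamma_{im}^j\alpha_j=\gamma_{jm}^i\alpha_i$ for all $i,j,m$. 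For $i\neq j$ the elements $\alpha_i$ and $\alpha_j$ are linearly independent in $\KN$, forcing $\gamma_{im}^j=0$; for $i=j$ the equation is vacuous. Hence the only surviving components are the diagonal ones, and setting $\gamma_{ij}:=\gamma_{ij}^i$ gives \eqref{eq:dt.torsion.free.conn}.

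For the converse I would simply check that the connection \eqref{eq:dt.torsion.free.conn} is torsion free: using $d(d\alpha_j)=0$, one computes $(\nabla_{\dt_i}d\alpha_j)(\dt_l)-(\nabla_{\dt_l}d\alpha_j)(\dt_i)=\gamma_{ij}\delta_{il}\alpha_l-\gamma_{lj}\delta_{li}\alpha_i$, which vanishes whether or not $i=l$; by linearity in the $1$-form argument it then vanishes on all of $\Omegaoneg$. There is no real difficulty here: the two points needing a moment's care are invoking $\Omegag{2}=0$ to discard the $d\omega$ term and using linear independence of the $\alpha_i$ to eliminate the off-diagonal Christoffel symbols; the relation $d\alpha_0=-\sum_k d\alpha_k$ requires no separate treatment, since torsion is $\complex$-linear in $\omega$.
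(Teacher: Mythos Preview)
Your proof is correct and follows essentially the same route as the paper's: write an arbitrary connection as $\nabla_{\dt_i}d\alpha_j=\gamma_{ij}^k d\alpha_k$, evaluate the torsion on the basis derivations using $d\alpha_k(\dt_l)=\delta_{kl}\alpha_l$, and use linear independence of the $\alpha_i$ to kill the off-diagonal Christoffel symbols. The only cosmetic difference is that you explicitly invoke $\Omegag{2}=0$ to dispose of the $d\omega$ term, whereas the paper leaves this implicit (it follows already from $d(d\alpha_j)=0$).
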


\begin{proof}
  One computes (writing out sums explicitly)
  \begin{align*}
    \paraa{\nabla_{\dt_k}d\alpha_i}(\dt_l)-\paraa{\nabla_{\dt_l}d\alpha_i}(\dt_k)
    &=\sum_{m=1}^N\bracketb{\gamma_{ki}^m\dt_l(\alpha_m)-\gamma_{li}^m\dt_k(\alpha_m)}
    =\gamma_{ki}^l\alpha_l -  \gamma_{li}^k\alpha_k,
  \end{align*}
  for $k,l=1,\ldots,N$, and concludes that the connection is torsion free if
  $\gamma_{ki}^l=0$ for $i=1,\ldots,N$ and $k\neq l$. Hence, a torsion
  free connection is given by
  \begin{align*}
    \nabla_{\dt_i}d\alpha_j = \gamma_{ij}^id\alpha_i,
  \end{align*}
  showing that every torsion free connection is given as in
  \eqref{eq:dt.torsion.free.conn} (setting
  $\gamma_{ij}=\gamma_{ij}^i$).
\end{proof}

\noindent
Since $\dt_i^\ast=\dt_i$ and $(d\alpha_i)^\ast=d\alpha_i$ it is easy
to check that the connection in
Proposition~\ref{prop:connection.dti.torsion.free} is a
$\ast$-connection if $\gamma_{ij}\in\reals$.  Now, let us construct a
torsion free $\ast$-connection on $\Omegaoneg$ that is compatible with
the diagonal hermitian form given by
\begin{align*}
  h_{ij}=h(d\alpha_i,d\alpha_j)=\delta_{ij}\lambda_i\alpha_i 
\end{align*}
for $\lambda_i\in\reals$. Setting $\gamma_{ij}=\tfrac{1}{2}\delta_{ij}$ in
Proposition~\ref{prop:connection.dti.torsion.free} it follows that
\begin{align}\label{eq:dti.lc.connection}
  \nabla_{\dt_i}d\alpha_j = \tfrac{1}{2}\delta_{ij}d\alpha_i
\end{align}
is a torsion free $\ast$-connection. Next, one checks that $\nabla$ is
compatible with $h$:
\begin{align*}
  \dt_ih_{jk}
  &- h(\nabla_{\dt_i}d\alpha_j,d\alpha_k)-h(d\alpha_j,\nabla_{\dt_i}d\alpha_k)\\
  & = \delta_{jk}\lambda_j\dt_id\alpha_j
    - \tfrac{1}{2}\delta_{ij}h_{ik} - \tfrac{1}{2}\delta_{ik}h_{ji}\\
  &= \lambda_j\delta_{jk}\delta_{ij}\alpha_i
    - \tfrac{1}{2}\delta_{ij}\delta_{ik}\lambda_i\alpha_i
    - \tfrac{1}{2}\delta_{ik}\delta_{ji}\lambda_j\alpha_j = 0.
\end{align*}
Hence, \eqref{eq:dti.lc.connection} defines a $\ast$-bimodule Levi-Civita connection
on $\Omegaoneg$ with respect to the hermitian form
$h(d\alpha_i,d\alpha_j)=\delta_{ij}\lambda_i\alpha_i$.

\section*{Acknowledgements}
J.A. would like to thank E. Darp\"o for discussions. Furthermore,
J.A. is supported by the Swedish Research Council grant 2017-03710.

\bibliographystyle{alpha}
\bibliography{references}  

\end{document}